\newtheorem{thm}{Theorem}[section]
\newtheorem{definition}[thm]{Definition}
\newtheorem{cor}[thm]{Corollary}
\newtheorem{lem}[thm]{Lemma}
\newtheorem{prop}[thm]{Proposition}
\newcommand{\N}{\mathbb{N}}
\newcommand{\Z}{\mathbb{Z}}
\newcommand{\R}{\mathbb{R}}
\newcommand{\PP}{\mathbb{P}}
\newcommand{\T}{\mathbb{T}}
\newcommand{\dvg}{\mathord{{\rm div}}\,}
\theoremstyle{remark}
\newtheorem{rmk}[thm]{Remark}
\numberwithin{equation}{section}
\title[Anomalous and total dissipation due to advection by Navier-Stokes equations]{Anomalous and total dissipation due to advection by solutions of randomly forced Navier-Stokes equations}
\author{Martina Hofmanov\'a}
\address[M. Hofmanov\'a]{Fakult\"at f\"ur Mathematik, Universit\"at Bielefeld, D-33501 Bielefeld, Germany}
\email{hofmanova(at)math.uni-bielefeld.de}
\author{Umberto Pappalettera}
\address[U. Pappalettera]{Fakult\"at f\"ur Mathematik, Universit\"at Bielefeld, D-33501 Bielefeld, Germany}
\email{upappale(at)math.uni-bielefeld.de}
\author{Rongchan Zhu}
\address[R. Zhu]{Department of Mathematics, Beijing Institute of Technology, Beijing 100081, China}
\email{zhurongchan(at)126.com}
\author{Xiangchan Zhu}
\address[X. Zhu]{ Academy of Mathematics and Systems Science,
Chinese Academy of Sciences, Beijing 100190, China}
\email{zhuxiangchan(at)126.com}
\thanks{R.Z. and X.Z. are grateful for
the financial support by National Key R\&D Program of China (No. 2022YFA1006300).
R.Z. gratefully acknowledges financial support from the NSFC (No.  12271030) and BIT Science and Technology Innovation Program Project 2022CX01001.
X.Z. is grateful for the financial supports  in part by National Key R\&D Program of China (No. 2020YFA0712700) and the NSFC (No.   12090014, 12288201) and the support by key Lab of Random Complex Structures and Data Science, Youth Innovation Promotion Association (2020003), Chinese Academy of Science.
The research of  M.H. and U.P. was funded by the European Research Council (ERC) under the European Union's Horizon 2020 research and innovation programme (grant agreement No. 949981). The research of M.H., R.Z. and X.Z. was funded by the Deutsche Forschungsgemeinschaft (DFG, German Research Foundation) – Project-ID 317210226 – SFB 1283.
}
\keywords{Anomalous dissipation, total dissipation, advection-diffusion equation, Navier-Stokes equations}
\date\today
\begin{document}

\begin{abstract}

We propose a novel approach to induce anomalous dissipation through advection driven by turbulent fluid flows.
Specifically, we establish the existence of a velocity field
$v$
satisfying randomly forced Navier-Stokes equations, leading to total dissipation of kinetic energy in finite time when advecting a passive scalar.  This dissipation phenomenon is uniform across viscosity parameters and initial conditions, representing a case of anomalous dissipation. We further explore dissipation induced by individual realizations of
$v$. Our results extend to scenarios where
the passive scalar
is replaced by solutions to two or three-dimensional deterministic Navier-Stokes equations advected by
$v$.

\end{abstract}

\maketitle


\tableofcontents

\section{Introduction}

We are concerned with the intriguing physical phenomenon of anomalous dissipation   observed in turbulent fluids.
This topic has been extensively discussed in the physics literature  \cite{O49, B59} in connection with passive scalar quantities $\rho$ like temperature or solute concentration  advected by the fluid velocity $v$. This scenario is mathematically modeled by the advection-diffusion equation:
\begin{align} \label{eq:passive_scalar_intro}
\partial_t \rho
+
v \cdot \nabla \rho
&=
\nu \Delta \rho.
\end{align}
We consider a situation where  the fluid is confined within a box with periodic boundaries,  denoted by the $d$-dimensional torus by $\T^d := (\R/\Z)^d$, where $d=2,3$. In equation \eqref{eq:passive_scalar_intro}, the unknown $\rho:\T^d \times \R_+ \to \R$ represents  the scalar quantity,  and $v : \T^d \times \R_+ \to \R^d$ represents the incompressible advecting velocity field, assumed to be predetermined and ideally governed by a physical model such as the Navier-Stokes or Euler equations.
The viscosity (or diffusivity) parameter $\nu$ is strictly positive. We assume that the initial condition $\rho_{0}$ in \eqref{eq:passive_scalar_intro} is non-constant, belongs to $L^{2}$ and is independent of $\nu$.

Anomalous dissipation refers to  the limiting behavior of the dissipation, specifically the time decay of the $L^{2}$-norm of $\rho=\rho^{\nu}$, as the viscosity approaches zero. More precisely, anomalous dissipation at time $t=1$  occurs if
\begin{align} \label{eq:anomalous2}
\liminf_{\nu \to 0}  \|\rho^{\nu}_1\|_{L^2} < \|\rho_0\|_{L^2},
\end{align}
or equivalently, if the energy equality holds,
\begin{align} \label{eq:anomalous}
\limsup_{\nu \to 0} \nu \int_0^1 \| \nabla \rho_t \|_{L^2}^2 dt > 0.
\end{align}
The latter  formulation is common in the physics literature. However, depending on the regularity of the advecting velocity field, the energy equality may only hold under additional assumptions on the initial condition and generally must be replaced by the corresponding energy inequality.

Physically, anomalous dissipation occurs due to the transfer of kinetic energy from $\rho$ to small spatial scales by the convective term $v \cdot \nabla \rho$. Mathematically, capturing this phenomenon is highly challenging, as demonstrated by the scarcity of rigorous mathematical results, as discussed in \autoref{sec:lit1} and \autoref{sec:lit2}.

Initially,  the claim \eqref{eq:anomalous2} may seem   counterintuitive. On the one hand, one  expects the solutions $\rho=\rho^{\nu}$ of \eqref{eq:passive_scalar_intro} to converge to a solution of the transport equation
\begin{align}\label{eq:transp}
\partial_t \rho
+
v \cdot \nabla \rho
= 0.
\end{align}
On the other hand, formally testing  \eqref{eq:passive_scalar_intro} by its solution leads to the energy equality
\begin{align*}
\|\rho^{\nu}_{1}\|_{L^{2}}^{2}+2\nu\int_{0}^{1}\|\nabla\rho^{\nu}_{t}\|^{2}_{L^{2}}d t = \|\rho_0\|^{2}_{L^2}.
\end{align*}
Thus, if sufficient  uniform  in $\nu$ bound allowed for the passage to the limit in this expression,  the second term on the left hand side would  vanish, and
the limit solution would  conserve energy:
$$
\|\rho_{1}\|_{L^{2}}^{2}=\lim_{\nu\to 0}\|\rho^{\nu}_{1}\|_{L^{2}}^{2}= \|\rho_0\|^{2}_{L^2}.
$$

This is consistent with the direct (formal) testing of \eqref{eq:transp} by its solution. However, it contradicts anomalous dissipation \eqref{eq:anomalous2}, suggesting that anomalous dissipation is only valid for irregular solutions, which in turn implies that the advecting velocity cannot be excessively regular. Negative results are also known; for instance, for $L^{2}$ initial conditions $\rho_{0}$, \cite[Remark 4.3]{BoCiCi22} shows the absence of anomalous dissipation for vector fields $v\in L^{1}([0,1],W^{1,p})$ for some $p>1$ or $v\in L^{1}([0,1],W^{1,1})\cap L^{q}((0,1)\times\T^{d})$ for some $q>1$. This notably excludes the Leray-Hopf solutions $v$ to the Navier-Stokes equations, which belong to $L^{2}([0,1],H^{1})$ (see also \cite[Theorem~4]{DrElIyJe22}).

\subsection{Main results}

Taking into account the aforementioned counter-argument, we revisit the framework of Leray-Hopf solutions $v$ to the Navier-Stokes equations augmented with an additional stochastic force and a friction term. This exploration is partly motivated by \cite{MaKr99}, wherein the authors assert that ``{\em the turbulent velocity which advects the passive scalar should be a solution to the Navier-Stokes equations [...] with some external stirring which maintains the fluid in a turbulent state}". Our main result demonstrates the existence of a solution adhering to the Leray-Hopf regularity locally in time, and inducing enhanced and ultimately anomalous dissipation in \eqref{eq:passive_scalar_intro}.
In fact, we establish a significantly more robust result concerning total dissipation over finite time, uniformly across both the initial condition and the viscosity parameter.

\begin{thm} \label{thm:main_passive}
There exist a countable family of Brownian motions $\{W^{k,\alpha}\}$, time-dependent velocity fields $\{\sigma_{k,\alpha}\}$, and a weak solution $v$ of the Navier-Stokes equations with large friction and additive noise:
\begin{align} \label{eq:veps3}
\begin{cases}
d v
+
(v \cdot \nabla) v \,dt
+
\nabla p_v \,dt
=
\Delta v \,dt
-
\varepsilon^{-1} v \, dt
+
\varepsilon^{-1}
\sum_{k,\alpha} \sigma_{k,\alpha} dW^{k,\alpha},
\\
\dvg v = 0,
\end{cases}
\end{align}
where $\varepsilon=\varepsilon(t)$ depends on $t$ and is piecewise constant, such that
\begin{align*}
v \in C_w([0,1),L^2) \cap L^2_{loc}([0,1),H^1)
\quad
\mbox {almost surely},
\end{align*}
and every progressively measurable\footnote{By weak continuity of weak solutions to \eqref{eq:passive_scalar_intro},
	progressive measurability is equivalent to require that $\rho$ is \emph{adapted}, namely for every $t \in [0,1)$ the random variable $\rho_t$ is measurable with respect to
	the sigma algebra $\mathcal{F}_t$, where $(\Omega,\mathcal{F},\{\mathcal{F}_t\}_{t \geq 0},\PP)$ is a
	filtered probability space, satisfying the usual conditions, that supports the Brownian motions $\{W^{k,\alpha}\}$.
} weak solution of \eqref{eq:passive_scalar_intro}
manifests total dissipation at time $t=1$, i.e. for every zero-mean initial condition $\rho_0 \in L^2$ and $\nu \in (0,1)$ it holds
\begin{align*}
\lim_{t \uparrow 1}
\| \rho_t \|_{L^2} = 0 \quad \mbox{ almost surely}.
\end{align*}
\end{thm}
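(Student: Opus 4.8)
The plan is to build $v$ on $[0,1)$ in a self-similar, piecewise fashion. I would partition $[0,1)$ into countably many subintervals $I_n=[t_n,t_{n+1})$ with $t_n\uparrow 1$ and $|I_n|\to 0$, and on each $I_n$ run a rescaled copy of one fixed ``building block'' velocity field, whose amplitude and spatial frequency grow with $n$, taking the parameter $\varepsilon=\varepsilon_n$ constant and small on $I_n$. The entire argument then reduces to a one-step estimate: there is a contraction factor $\theta\in(0,1)$, \emph{independent of $n$ and of $\nu\in(0,1)$}, such that advection--diffusion over a single $I_n$ shrinks the expected $L^2$-energy of any mean-zero scalar by $\theta$. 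Iterating gives $\E{\|\rho_{t_{n+1}}\|_{L^2}^2}\le\theta^{\,n+1}\|\rho_0\|_{L^2}^2\to 0$; since divergence-free advection and diffusion conserve the zero-mean condition and, for a fixed realization of $v$, weak solutions of \eqref{eq:passive_scalar_intro} satisfy the energy inequality (so $t\mapsto\|\rho_t\|_{L^2}$ is a.s.\ non-increasing), the expected decay upgrades to $\lim_{t\uparrow 1}\|\rho_t\|_{L^2}=0$ almost surely.

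The heart of the one-step estimate is a scaling limit. The strong friction $\varepsilon^{-1}v$ together with the strongly scaled noise $\varepsilon^{-1}\sum\sigma_{k,\alpha}dW^{k,\alpha}$ forces the velocity to behave like an Ornstein--Uhlenbeck process, which as $\varepsilon\to 0$ converges, in a Wong--Zakai/homogenization sense, to a white-in-time divergence-free Gaussian field of Kraichnan type, $v\,dt\rightharpoonup\sum_{k,\alpha}\sigma_{k,\alpha}\circ dW^{k,\alpha}$. The Navier--Stokes nonlinearity and the Laplacian are genuinely lower order here: the stationary amplitude scales like $\varepsilon^{-1/2}$, so the friction is of order $\varepsilon^{-3/2}$ and dominates the nonlinearity of order $\varepsilon^{-1}$, leaving the true Leray--Hopf solution $O(\varepsilon^{1/2})$-close to the idealized field on each $I_n$. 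In the Kraichnan limit the expected two-point function $f_t(x,y)=\E{\rho_t(x)\rho_t(y)}$ solves a \emph{closed} deterministic parabolic equation with generator $\nu(\Delta_x+\Delta_y)+\mathcal{B}$, where $\mathcal{B}$ is built from the spatial covariance of the field. Choosing this covariance self-similar and rough---so that the limiting object is a genuine stochastic flow of kernels rather than of diffeomorphisms---forces $\E{\|\rho_t\|_{L^2}^2}$ to decay at a turbulent rate that dominates molecular diffusion, yielding $\theta<1$ uniformly in $\nu\in(0,1)$, and in particular as $\nu\to0$.

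To return to the actual statement I would fix finite $\varepsilon_n$, where the scalar equation is a bona fide deterministic PDE given the realization of $v$ and obeys the energy equality. The per-interval contraction is then obtained by taking $\varepsilon_n$ small enough that the finite-$\varepsilon$ two-point dynamics approximate the Kraichnan two-point equation to within $\theta/2$, so that the one-step expected contraction survives at the level of the true solution. The asserted regularity $v\in C_w([0,1),L^2)\cap L^2_{loc}([0,1),H^1)$ follows by patching the per-interval Leray--Hopf (Galerkin) solutions, whose global existence on each $I_n$ is guaranteed by the dissipativity of the strong friction, and by noting that the energy and enstrophy are allowed to blow up as $t\uparrow 1$ because $\varepsilon_n\to 0$; this is exactly what the open interval and the $L^2_{loc}$ integrability accommodate.

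The main obstacle is producing a contraction that is uniform in $\nu$ from an honest stochastic Navier--Stokes solution. Two difficulties compound. First, one must realize a sufficiently rough (Hölder exponent below $1$) Kraichnan covariance as the scaling limit of admissible coefficients $\{\sigma_{k,\alpha}\}$ while keeping the nonlinear and viscous residual of \eqref{eq:veps3} provably subdominant. Second, one must show that the turbulent dissipation encoded in $\mathcal{B}$ does not degenerate as $\nu\to0$---this is the spontaneous-stochasticity mechanism, and it is precisely the spatial roughness of the field that sustains decay of the two-point function in the inviscid limit. Controlling the finite-$\varepsilon$ error in the two-point equation \emph{uniformly in $\nu$}, so that a single $\theta<1$ serves all viscosities simultaneously, is where the technical weight of the argument concentrates.
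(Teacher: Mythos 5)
Your global skeleton---a piecewise-in-time construction with noise pushed to higher frequencies/amplitudes as $t\uparrow 1$, the friction--noise scaling $v\approx\varepsilon^{-1/2}w$ making $v$ mimic white-in-time transport noise, a per-interval decay estimate, then iteration---matches the paper in spirit. But your key one-step estimate contains a genuine gap: you require a contraction factor $\theta<1$ for the expected energy that is \emph{uniform in $\nu\in(0,1)$ on a single fixed interval $I_n$}, to be obtained from a spatially rough Kraichnan covariance (spontaneous stochasticity). This is incompatible with the regularity the theorem itself imposes on $v$. A genuinely rough Kraichnan field is not spatially $H^1$ (its enstrophy $\sum_k\theta_k^2|k|^2$ diverges), so to keep $v\in C_w([0,1),L^2)\cap L^2_{loc}([0,1),H^1)$ and to control the Navier--Stokes remainder you must truncate the coefficients in frequency on each $I_n$, exactly as the paper does with the shells $N_q\le|k|\le 2N_q$. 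Once the field on $I_n$ is a.s.\ in $L^1(I_n;W^{1,2})$, the negative result of \cite[Remark 4.3]{BoCiCi22} recalled in the introduction applies pathwise: for any fixed initial datum at time $t_n$, the dissipated energy $2\nu\int_{I_n}\|\nabla\rho_t\|_{L^2}^2\,dt$ tends to $0$ as $\nu\to 0$, and hence (by dominated convergence) so does its expectation. Therefore no $\theta<1$ can serve all $\nu\in(0,1)$ on a fixed interval; equivalently, the ``finite-$\varepsilon$ approximation error'' to the rough-Kraichnan two-point dynamics cannot be made small uniformly in $\nu$---the $\varepsilon_n$ you would need depends on $\nu$, which destroys the universality you are after. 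The roughness that sustains spontaneous stochasticity can only exist in the limit $n\to\infty$, never on any single interval where $v$ is an honest Leray--Hopf object.

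The paper's proof shows that a $\nu$-uniform per-interval rate is not needed, and this is precisely where the two arguments diverge. There, the corrector scheme (perturbing $\rho$ by $-\varepsilon_q^{1/2}w\cdot\nabla\rho+\tfrac{\varepsilon_q}{2}w\cdot\nabla(w\cdot\nabla\rho)+V$) makes the Stratonovich-to-It\=o corrector $\sim\kappa_q\Delta$ appear with $\kappa_q\to\infty$, yielding smallness of the $H^{-1}$ norm of the low-mode part uniformly on $[1-\tau_q/2,1-\tau_{q+1}]$; this is combined with the energy inequality and the interpolation $\|\rho\|_{L^2}^4\le\|\rho\|_{H^1}^2\|\rho\|_{H^{-1}}^2$ through a $BV$ chain rule to force $\|\rho_{1-\tau_{q+1}}\|_{L^2}^2\lesssim M^2 c_q/(\nu\tau_q)$ with probability $1-\tilde c_q$. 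This bound is \emph{not} uniform in $\nu$, but for each fixed $\nu$ it tends to zero as $q\to\infty$, and Borel--Cantelli then gives total dissipation for every $\nu$ (the null set being allowed to depend on $\nu$ and $\rho_0$, which is all the theorem claims). Two further inaccuracies in your write-up: first, the Leray--Hopf solution is \emph{not} $O(\varepsilon^{1/2})$-close to the idealized white-noise field---the quadratic self-interaction $\varepsilon_q^{-1}b(w,w)$ of the fast component produces a remainder $\tilde r$ that is $O(1)$ in $\varepsilon_q$ (and large in $N_q$), which the paper must neutralize with the extra corrector $V$; second, your expected-contraction iteration, even if it held, needs the contraction conditionally on $\mathcal{F}_{t_n}$ uniformly over the (random, increasingly oscillatory) data $\rho_{t_n}$, which is again exactly the uniformity that fails for smooth per-interval fields.
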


The solution $v$, as provided by the preceding theorem, experiences a loss of Leray-Hopf regularity precisely at time $t=1$. This phenomenon arises due to the increasing strength of the forcing noise and friction in \eqref{eq:veps3}, amplified as time approaches $1$ via $\varepsilon(t)\to 0$ as $t\to 1$. In essence, $v$ undergoes  a carefully controlled blow up at  $t=1$, a behavior that can be further quantified based on the choice of $\varepsilon$ and the characteristics of the noise.
The mechanism driving total (not merely enhanced) dissipation involves a forced transfer of the kinetic energy of $\rho$ to increasingly higher  wavenumbers as time $t$ converges to $1$. Notably, the obtained result is \emph{universal}, as it holds true for every initial condition in $L^{2}$ and every $\nu \in (0,1)$.
Specifically, the dissipation is anomalous in the sense that \eqref{eq:anomalous2} holds almost surely by continuity extension, along with  \eqref{eq:anomalous}  if $\rho$ adheres to the energy equality\footnote{For the supposed regularity of $v$, this condition holds when $\rho_0 \in L^3$  for $d=3$ and $\rho_{0}\in L^{2^{+}}$ for $d=2$ by \cite[ Theorem~3.3]{BCC24}.}.
Central to our proof is the crucial property that solutions of \eqref{eq:passive_scalar_intro}, with $\rho_0 \in L^2$ and velocity $v \in L^2_{loc}([0,1),H^1)$,  exist and are unique due to \cite{BoCiCi22}, and satisfy the corresponding energy inequality. Such solutions are readily obtained via Galerkin approximation.

As an intermediate auxiliary  step, we establish the corresponding statement for the scenario of white-in-time velocity $v$ formally expressed as:
$$
v=\sum_{k,\alpha}\sigma_{k,\alpha}\partial_{t}W^{k,\alpha}.
$$
This result, novel and interesting in its own right, serves as a foundational component for the proof of \autoref{thm:main_passive}.

It is noteworthy to mention  that one could theoretically consider a specific realization $v=v(\omega)$ of the random velocity field $v$ above and inquire whether the \emph{deterministic} vector field $v(\omega)$ induces total dissipation in \eqref{eq:passive_scalar_intro}. However, the challenge in this scenario lies in the fact that the full measure set of $\omega$'s for which $\lim_{t \uparrow 1}\| \rho_t \|_{L^2} = 0$ in \autoref{thm:main_passive} may potentially depend on the initial condition $\rho_0$ and the viscosity $\nu$. Consequently, the universality of dissipation is compromised, yielding a weaker result as stated below:

\begin{cor} \label{cor:passive}
For every countable sets of initial conditions $\mathscr{C} \subset L^2$ with zero mean and viscosities $\mathscr{V} \subset (0,1)$ there exists a deterministic vector field $v=v(\mathscr{C},\mathscr{V})$ such that $\lim_{t \uparrow 1} \|\rho_t\|_{L^2}=0$ for every weak solution $\rho$ of \eqref{eq:passive_scalar_intro} with initial condition $\rho_0 \in \mathscr{C}$ and viscosity $\nu \in \mathscr{V}$. More generally, given arbitrary probability measures $\PP_{\rho_0}$ on $L^2$ with zero mean and $\PP_{\nu}$ on $(0,1)$, there exists  $v=v(\PP_{\rho_0},\PP_\nu)$ inducing total dissipation at time $t=1$ in \eqref{eq:passive_scalar_intro} for $\PP_{\rho_0} \otimes \PP_\nu$ almost every $(\rho_0,\nu)$.
\end{cor}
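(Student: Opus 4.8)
The plan is to deduce both assertions from \autoref{thm:main_passive} by a measure-theoretic selection argument, the whole point being that the $\PP$-null exceptional set furnished by the theorem is permitted to depend on $(\rho_0,\nu)$, and one must control this dependence across an uncountable range of data. For the countable case, write $\mathscr{C}=\{\rho_0^{(i)}\}_i$ and $\mathscr{V}=\{\nu_j\}_j$. For each pair $(i,j)$, \autoref{thm:main_passive} supplies a full-measure event $\Omega_{i,j}\in\mathcal{F}$ on which the (unique, by \cite{BoCiCi22}) weak solution of \eqref{eq:passive_scalar_intro} with data $(\rho_0^{(i)},\nu_j)$ satisfies $\lim_{t\uparrow 1}\|\rho_t\|_{L^2}=0$. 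Intersecting these countably many events with the full-measure event $\Omega_0$ on which $v$ enjoys the stated Leray--Hopf regularity $v\in C_w([0,1),L^2)\cap L^2_{loc}([0,1),H^1)$, the set $\Omega^\ast:=\Omega_0\cap\bigcap_{i,j}\Omega_{i,j}$ still has $\PP(\Omega^\ast)=1$ and is in particular non-empty. Fixing any $\omega^\ast\in\Omega^\ast$ and setting $v:=v(\omega^\ast)$ yields a single deterministic field for which the unique weak solution dissipates totally simultaneously across all of $\mathscr{C}\times\mathscr{V}$.

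For the measure case I would first record joint measurability: by uniqueness of weak solutions and their construction via Galerkin approximation, the solution map $(\omega,\rho_0,\nu)\mapsto\rho^{\omega,\rho_0,\nu}$ is jointly measurable, so that
\[
A:=\Big\{(\omega,\rho_0,\nu):\ \lim_{t\uparrow 1}\|\rho^{\omega,\rho_0,\nu}_t\|_{L^2}=0\Big\}
\]
is measurable for $\mathcal{F}\otimes\mathcal{B}(L^2)\otimes\mathcal{B}((0,1))$. \autoref{thm:main_passive} asserts precisely that, for every fixed $(\rho_0,\nu)$, the $\omega$-section of $A$ has full $\PP$-measure. Applying Tonelli's theorem to $\mathbf 1_A$ under $\PP\otimes\PP_{\rho_0}\otimes\PP_\nu$ and integrating in $\omega$ first gives $(\PP\otimes\PP_{\rho_0}\otimes\PP_\nu)(A)=1$. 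Reversing the order of integration, and writing $A_\omega:=\{(\rho_0,\nu):(\omega,\rho_0,\nu)\in A\}$ for the $\omega$-section, we obtain
\[
1=(\PP\otimes\PP_{\rho_0}\otimes\PP_\nu)(A)=\int_\Omega (\PP_{\rho_0}\otimes\PP_\nu)(A_\omega)\,d\PP(\omega).
\]
Since $(\PP_{\rho_0}\otimes\PP_\nu)(A_\omega)\le 1$ pointwise while the integral equals $1$, it follows that $(\PP_{\rho_0}\otimes\PP_\nu)(A_\omega)=1$ for $\PP$-a.e.\ $\omega$. Intersecting once more with $\Omega_0$, any such $\omega^\ast$ provides $v:=v(\omega^\ast)$ inducing total dissipation for $\PP_{\rho_0}\otimes\PP_\nu$-a.e.\ $(\rho_0,\nu)$, as claimed.

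The hard part will be the joint measurability underpinning the Fubini step: one must verify that the solution depends measurably on the full triple $(\omega,\rho_0,\nu)$, compatibly with the product $\sigma$-algebra, rather than merely on $\omega$ for each fixed $(\rho_0,\nu)$. This is exactly where uniqueness from \cite{BoCiCi22} is indispensable, since it identifies the Galerkin limit as a genuine measurable selection rather than a set-valued object. Measurability in $\rho_0$ follows from the linearity of \eqref{eq:passive_scalar_intro} in $\rho$ together with the continuous dependence of the unique solution on the $L^2$ datum, and measurability in $\nu$ from the stability estimates of the advection--diffusion equation on the fixed velocity $v(\omega)$; combined with the $\omega$-measurability already built into \autoref{thm:main_passive}, these upgrade to the required joint measurability of $A$.
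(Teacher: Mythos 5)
Your argument is correct, but for the measure-valued statement it takes a genuinely softer route than the paper's. The paper never invokes \autoref{thm:main_passive} as a black box: instead it reruns the quantitative machinery (the passive-scalar analogue of \autoref{lem:h-4} together with the Borel--Cantelli scheme of \autoref{prop:dissipation_transport}) directly under the product measure $\tilde{\PP}:=\PP_{\rho_0}\otimes\PP_\nu\otimes\PP$, treating $(\rho_0,\nu)$ as additional random variables, and only then applies Fubini to extract the full-$\PP$-measure set of good $\omega$'s. Because expectations are there taken over the data as well, the paper needs two preliminary reductions that your argument avoids entirely: without loss of generality $\|\rho_0\|_{L^2}\le M$ for $\PP_{\rho_0}$-a.e.\ $\rho_0$, and $\nu^{-1/4}$ integrable with respect to $\PP_\nu$ after replacing $\PP_\nu$ by an equivalent measure (harmless, since equivalent measures share null sets); these are forced by the factors $M$ and $\nu^{-1/4}$ in the estimates of \autoref{lem:h-4}. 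Your pointwise-plus-Tonelli argument requires no integrability of the data at all, which is an honest simplification. What you pay is exactly what you identified: the joint measurability of $(\omega,\rho_0,\nu)\mapsto\rho$, which the paper gets essentially for free because the solution is produced by Galerkin approximation on the product space, so measurability in $\tilde{\omega}$ is built into the construction and uniqueness identifies the limit. Your sketch of this step is the right one and can be completed by standard means: for fixed $\omega$ the solution map is a contraction in $\rho_0$ (linearity plus the energy inequality) and stable in $\nu$ (energy estimate on the difference of two solutions, using $\rho\in L^2([0,t],H^1)$ for $t<1$), hence continuous on the separable metric space $L^2\times(0,1)$, while it is measurable in $\omega$ for fixed data; joint measurability then follows by the Carath\'eodory-type lemma, and the event $A$ is measurable because $\|\rho_t\|_{L^2}$ is lower semicontinuous on $C_w([0,t],L^2)$ and non-increasing in $t$. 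Your treatment of the countable case by intersecting countably many null sets is also correct; the paper leaves it implicit, as it is subsumed by the measure statement applied to atomic measures charging $\mathscr{C}$ and $\mathscr{V}$.
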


In cases where a higher degree of regularity for $v$ is desired, it can be postulated to satisfy an Ornstein-Uhlenbeck equation instead:
\begin{align} \label{eq:veps2_intro}
d v
=
-
\varepsilon^{-1} v \, dt
+
\varepsilon^{-1}
\sum_{k,\alpha} \sigma_{k,\alpha} dW^{k,\alpha}.
\end{align}

\begin{thm} \label{thm:OU}
There exists $v \in C^{1/2^-}_{loc}([0,1),C^\infty)$ almost surely, satisfying \eqref{eq:veps2_intro} above, such that the same total dissipation for solutions of \eqref{eq:passive_scalar_intro} stated in \autoref{thm:main_passive} holds true.
\end{thm}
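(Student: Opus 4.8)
The plan is to construct $v$ directly as the (Gaussian) solution of the linear equation \eqref{eq:veps2_intro}, reusing the same decomposition of the time interval and the same family of smooth, divergence-free mixing profiles $\{\sigma_{k,\alpha}\}$ that underlie the white-in-time construction serving as the foundation for \autoref{thm:main_passive}. I would partition $[0,1)$ into intervals $I_n=[t_n,t_{n+1})$ accumulating at $t=1$, set $\varepsilon(t)\equiv\varepsilon_n$ with $\varepsilon_n\downarrow 0$ on $I_n$, and activate on $I_n$ only the modes concentrated near a frequency $\lambda_n\uparrow\infty$. The solution is then the explicit Ornstein-Uhlenbeck process
\begin{align*}
v_t = e^{-(t-t_n)/\varepsilon_n} v_{t_n} + \varepsilon_n^{-1} \int_{t_n}^t e^{-(t-s)/\varepsilon_n} \sum_{k,\alpha} \sigma_{k,\alpha}(s) \, dW^{k,\alpha}_s, \qquad t \in I_n.
\end{align*}
The scaling of both drift and noise by $\varepsilon_n^{-1}$ is precisely the one for which, as $\varepsilon_n\to 0$, the process decorrelates on the time scale $\varepsilon_n$ and its time integral converges to that of the white-in-time field $\sum_{k,\alpha}\sigma_{k,\alpha}\partial_t W^{k,\alpha}$. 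This is exactly the $\varepsilon\to 0$ limit that already sits between the Navier-Stokes solution and the white-in-time field in the proof of \autoref{thm:main_passive}, so the present statement should follow by isolating the cleaner ``OU versus white-in-time'' comparison and discarding the nonlinear, pressure and viscous terms.

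For the regularity I would note that the OU process is Gaussian with explicit covariance, so the temporal Hölder regularity $C^{1/2^-}$ is inherited from the martingale part $\varepsilon_n^{-1}\sum_{k,\alpha}\sigma_{k,\alpha}W^{k,\alpha}$ (the drift contributes a $C^1$-in-time term), which I would verify by a Kolmogorov continuity argument on the increments of the stochastic convolution. Spatial smoothness follows because each $\sigma_{k,\alpha}$ is $C^\infty$ and, on any compact subinterval $[0,1-\delta]$, only finitely many intervals $I_n$ are involved: there the frequency band is bounded, the amplitudes $\sim\varepsilon_n^{-1/2}$ stay bounded (since $\varepsilon_n$ is bounded below), and $\sum_{k,\alpha}\|\sigma_{k,\alpha}\|_{C^m}^2<\infty$ for every $m$, giving $v\in C([0,1-\delta],C^\infty)$ with uniform Hölder-$\tfrac12^-$ constant. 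The restriction to $C^{1/2^-}_{loc}([0,1),C^\infty)$ rather than up to $t=1$ reflects exactly the blow-up of the frequencies $\lambda_n$ and of the amplitude as $t\uparrow 1$.

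For the dissipation I would argue interval by interval. On $I_n$ the advection-diffusion equation \eqref{eq:passive_scalar_intro} is driven by the smooth OU velocity, and I would show its solution is quantitatively close to the one driven by the corresponding white-in-time field, with an error controlled by $\varepsilon_n$ uniformly in $\nu\in(0,1)$ — a Wong-Zakai/homogenization estimate adapted to the already-established white-in-time dissipation. Since the white-in-time field is designed so that a definite fraction of the $L^2$ energy of $\rho$ is transferred to scale $\lambda_n^{-1}$ and dissipated uniformly in $\nu$, choosing $\varepsilon_n$ small enough on each $I_n$ guarantees $\|\rho_{t_{n+1}}\|_{L^2}\le\theta\|\rho_{t_n}\|_{L^2}$ for a fixed $\theta<1$ independent of $n$ and $\nu$. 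Iterating gives $\|\rho_{t_n}\|_{L^2}\le\theta^n\|\rho_0\|_{L^2}\to 0$, and since the energy inequality forces $t\mapsto\|\rho_t\|_{L^2}$ to be non-increasing, this upgrades to $\lim_{t\uparrow 1}\|\rho_t\|_{L^2}=0$ almost surely.

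The main obstacle is the uniform-in-$\nu$ comparison between the OU-advected and white-in-time-advected scalar dynamics on each interval: one must ensure the $O(\varepsilon_n)$ error in the approximation does not interact badly with the diffusion at small $\nu$, so that the per-interval contraction factor $\theta$ can be chosen independently of the viscosity. This is where the precise $\varepsilon_n^{-1}$-scaling of \eqref{eq:veps2_intro} and the freedom to shrink $\varepsilon_n$ relative to $\lambda_n$ and $|I_n|$ must be carefully balanced. A secondary point is to propagate the almost-sure statement across the countably many intervals — e.g. via Borel-Cantelli applied to the per-interval estimates — so that a single full-measure event yields total dissipation for all $\nu$ and all $\rho_0$ simultaneously, exactly as in \autoref{thm:main_passive}.
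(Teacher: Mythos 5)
Your setup matches the paper's: for the Ornstein--Uhlenbeck equation \eqref{eq:veps2_intro} the velocity is the explicit stochastic convolution, which in the paper's notation means the remainder $r$ in the decomposition $v=\varepsilon_q^{-1/2}w+r$ vanishes identically (this is exactly the content of \autoref{rmk:OU}), and your regularity argument via Kolmogorov continuity plus finiteness of active modes on compact subintervals of $[0,1)$ is fine. The gap is in the dissipation argument, and it is twofold. First, the per-interval contraction $\|\rho_{t_{n+1}}\|_{L^2}\le\theta\|\rho_{t_n}\|_{L^2}$ with a fixed $\theta<1$ \emph{independent of $\nu$} does not hold, not even for the white-in-time field you want to compare with. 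On an interval of length $\sim\tau_q$ with noise supported in the frequency band $[N_q,2N_q]$, the noise (formally energy-preserving) only transfers energy to high wavenumbers; the actual loss of $L^2$ norm is $2\nu\int\|\nabla\rho\|_{L^2}^2\,dt$, and the estimate one can extract (this is exactly the paper's \autoref{lem:h-1} combined with the interpolation $\|\rho\|_{L^2}^4\le\|\rho\|_{H^1}^2\|\rho\|_{H^{-1}}^2$ and the BV chain-rule argument) is $\|\rho_{1-\tau_{q+1}}\|_{L^2}^2\lesssim M^2 c_q/(\nu\tau_q)$, a bound that \emph{degrades} as $\nu\to0$ at fixed $q$. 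Uniformity in $\nu$ is recovered only in the limit $t\uparrow1$, because $c_q/\tau_q\to0$ so that for each fixed $\nu$ the bound becomes effective from some $q_\star(\nu)$ onward. Your geometric iteration $\theta^n\|\rho_0\|_{L^2}$ is therefore built on a false premise; the correct mechanism is an absolute (not relative) smallness at the times $1-\tau_{q+1}$, improving in $q$, combined with monotonicity of the energy and Borel--Cantelli.

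Second, the ``Wong--Zakai/homogenization estimate with error $O(\varepsilon_n)$ uniformly in $\nu$'' that you invoke is precisely the hard technical core, and you leave it as an assertion. The paper deliberately avoids any solution-level comparison between the OU-driven and white-in-time-driven equations: instead it perturbs the solution itself, considering $\rho-\varepsilon_q^{1/2}\,w\cdot\nabla\rho+\tfrac{\varepsilon_q}{2}\,w\cdot\nabla(w\cdot\nabla\rho)+V$, whose dynamics exhibits the Stratonovich-to-It\=o corrector $S_q\approx\tfrac23\kappa_q\Delta$ plus remainders controlled by powers of $\varepsilon_q$ (the analogue of \autoref{lem:h-4}); the negative-Sobolev smallness is then proved \emph{directly} for the OU-driven solution, bypassing any coupling estimate between the two equations. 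This is a perturbation of estimates, not of solutions, and it is what makes the uniform-in-$\nu$ issue tractable. A minor but real further point: your final sentence asks for a single full-measure event working for all $\rho_0$ and all $\nu$ simultaneously, but \autoref{thm:main_passive} does not assert this --- the null set there may depend on $(\rho_0,\nu)$, and the impossibility of removing this dependence for a fixed realization is exactly why \autoref{cor:passive} is stated only for countable families or for almost every datum.
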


Our methodology exhibits  robustness that extends to  nonlinear equations  as well. For instance, let us consider the Navier-Stokes equations advected by $v$ in dimension $d=2$ or $3$:
\begin{align} \label{eq:NS_intro}
\begin{cases}
\partial_t u
+
(u \cdot \nabla) u
+
(v \cdot \nabla) u
+
\nabla p
=
\nu \Delta u,
\\
\dvg u = 0,
\end{cases}
\end{align}
where  $u: \T^d \times \R_+ \to \R^d$ represents the unknown velocity field, and the scalar pressure field $p: \T^d \times \R_+  \to \R$ is disregarded from  the analysis of \eqref{eq:NS_intro} by applying the Leray projector $\Pi$.
Here is the resultant theorem:

\begin{thm} \label{thm:main_NS_d=2}
For the same velocity field $v$ as in \autoref{thm:main_passive} or  \autoref{thm:OU}, every progressively measurable Leray-Hopf weak solution of \eqref{eq:NS_intro} with zero-mean, divergence-free initial condition $u_0 \in L^2$ and $\nu \in (0,1)$ satisfies $\lim_{t \uparrow 1} \|u_t\|_{L^2} = 0$ almost surely.
\end{thm}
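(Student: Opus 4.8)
The plan is to mirror the proof of \autoref{thm:main_passive} as closely as possible, the guiding principle being that the self-advection term $(u\cdot\nabla)u$ is invisible to the $L^2$ energy balance. Indeed, since $u$ is divergence-free and mean-zero, testing \eqref{eq:NS_intro} against $u$ and using $\int_{\T^d}(u\cdot\nabla)u\cdot u\,dx=0$, $\int_{\T^d}(v\cdot\nabla)u\cdot u\,dx=0$ and $\int_{\T^d}\nabla p\cdot u\,dx=0$ (the latter two by $\dvg u=\dvg v=0$), every Leray-Hopf weak solution obeys the very same energy inequality as the passive scalar, namely
\begin{align*}
\|u_t\|_{L^2}^2+2\nu\int_s^t\|\nabla u_r\|_{L^2}^2\,dr\le\|u_s\|_{L^2}^2,\qquad 0\le s\le t<1,
\end{align*}
so that $\|u_t\|_{L^2}$ is nonincreasing and bounded by $\|u_0\|_{L^2}$ on $[0,1)$.

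First I would isolate, from the proof of \autoref{thm:main_passive}, the per-interval depletion estimate: on each of the time intervals $I_n$ accumulating at $t=1$ on which $v$ acts as a strong mixing flow, the diffusion $\nu\Delta$ combined with transport by $v$ depletes a definite fraction of the energy of the advected quantity, uniformly in $\nu\in(0,1)$ and in the data. The point is that, rewriting $(u\cdot\nabla)u+(v\cdot\nabla)u=((u+v)\cdot\nabla)u$, the field $u$ is transported by the incompressible velocity $u+v$; thus, up to the energy-neutral pressure gradient, each component of $u$ plays the role of a passive scalar advected by a drift that differs from $v$ only by the uniformly bounded, energy-neutral perturbation $u$. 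The plan is then to re-derive the depletion estimate with this extra drift present, obtaining $\|u_{t_{n+1}}\|_{L^2}\le\lambda\|u_{t_n}\|_{L^2}$ with $\lambda<1$ for $n$ large, and to conclude exactly as in \autoref{thm:main_passive} by composing these contractions, so that $\prod_{n\ge N}\lambda=0$ forces $\lim_{t\uparrow1}\|u_t\|_{L^2}=0$.

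The hard part will be that, unlike the passive scalar equation, \eqref{eq:NS_intro} is nonlinear, so the depletion estimate can no longer be read off from the operator norm of a \emph{linear} solution map. I would control the self-advection as a perturbation of the mixing dynamics, exploiting the mechanism already built into the construction: as $t\uparrow1$ one has $\varepsilon(t)\to0$, so the mixing intensity of $v$ on $I_n$ diverges, whereas the perturbing drift $u$ is controlled by the fixed—indeed decreasing—bound $\|u_0\|_{L^2}$. Hence for $n$ large the perturbation $u$ is negligible compared with $v$, the enhanced dissipation produced by $v$ survives, the dissipation integral $\nu\int_{I_n}\|\nabla u_r\|_{L^2}^2\,dr$ captures a fixed fraction of $\|u_{t_n}\|_{L^2}^2$, and the smallness of $u$ only improves as its energy decays, closing the argument.

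The only genuinely dimension-dependent ingredient is the Leray-Hopf theory for \eqref{eq:NS_intro}: in $d=2$ solutions on $[0,1)$ are unique and smooth, in $d=3$ they are merely Leray-Hopf and possibly non-unique, but since the above uses nothing beyond the energy inequality and the weak formulation—properties shared by \emph{every} Leray-Hopf solution—the conclusion holds for all of them in both dimensions, and the smoother field $v$ of \autoref{thm:OU} is handled identically (in fact more easily). In $d=2$ one may alternatively pass to the vorticity $\omega=\curl u$, which solves the genuine advection-diffusion equation $\partial_t\omega+((u+v)\cdot\nabla)\omega=\nu\Delta\omega$; total dissipation of $\omega$ in $L^2$, together with the Poincaré inequality $\|u_t\|_{L^2}\le C\|\nabla u_t\|_{L^2}=C\|\omega_t\|_{L^2}$ for mean-zero divergence-free $u$, then yields the claim directly, the same perturbative control of the $u$-part of the drift being still required.
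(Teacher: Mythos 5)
Your high-level intuition matches the paper's: the self-advection is energy-neutral and, quantitatively, negligible against the diverging mixing intensity (the paper itself remarks that the nonlinearity $(u\cdot\nabla)u$ is not the main source of difficulty). But the proposal has a genuine gap at its central reduction step. The depletion estimate you want to import from \autoref{thm:main_passive} is \emph{not} an energy estimate; it is a negative-Sobolev bound on the low-mode part of the solution (the analogue of \autoref{lem:h-1} and \autoref{lem:h-4}), obtained from the mild formulation of the equation itself, and at that level the pressure cannot be discarded as ``energy-neutral''. After applying the Leray projector, the hidden Stratonovich-type corrector that produces the dissipation is no longer the scalar one $\frac23\kappa_q\Delta$ but $S_q(u)=\frac23\kappa_q\Delta u-S_q^\perp(u)$, where $S_q^\perp$ involves $\Pi^\perp$ and mixes components. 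Proving that $S_q$ still acts like a large multiple of the Laplacian on low modes (with the different constant $\frac25\kappa_q$, and error $\lesssim\kappa_q N_q^{-\delta}\|u^L\|_{H^1}$) is precisely the vectorial analysis of \autoref{ssec:strato}, which the paper singles out as what makes the vector case technically more demanding than the scalar one; no component-wise passive-scalar reduction exists. This is the part of the proof your plan leaves unaddressed, and it is the actual new content of the Navier--Stokes case relative to \autoref{thm:main_passive}.

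Second, the contraction framing $\|u_{t_{n+1}}\|_{L^2}\le\lambda\|u_{t_n}\|_{L^2}$ with fixed $\lambda<1$, concluded by $\prod_n\lambda=0$, is not what the mechanism yields for the nonlinear equation and is likely not provable as stated: all error terms in the depletion estimate (from $b(u,u)$, from the remainder $r$ in the decomposition of $v$, from the correctors) scale with the \emph{global} bound $M^2=\|u_0\|_{L^2}^2$, not with the current energy $\|u_{t_n}\|_{L^2}^2$. For the linear scalar equation one can rescale by linearity (and condition on $\mathcal{F}_{t_n}$) to turn the $M$-bound into a genuine contraction; for Navier--Stokes one cannot. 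What the argument actually produces, on an event of probability at least $1-\tilde c_q$ per interval, is the absolute bound $E(1-\tau_{q+1})\lesssim M^2 c_q/(\nu\tau_q)$ with $c_q/\tau_q\to0$, obtained by combining the $H^{-1}$ smallness with the energy inequality via the interpolation $\|u\|_{L^2}^4\le\|u\|_{H^1}^2\|u\|_{H^{-1}}^2$ and a chain rule for the BV function $t\mapsto\|u_t\|_{L^2}^2$, and then Borel--Cantelli over $q$. Your conclusion survives with this replacement, but the product-of-contractions step should be abandoned. Finally, the 2D vorticity alternative is not a shortcut: for $u_0\in L^2$ one only has $\omega_0=\curl u_0\in H^{-1}$, so \autoref{thm:main_passive} does not apply from time zero, and the drift there is $u+v$ rather than $v$, so it requires the same perturbative re-derivation (with the same per-interval probabilistic structure) as the direct argument.
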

The previous theorem holds true both in dimension $d=2$ and $d=3$; however, there is a technical difference between these two cases.
When $d=2$, by pathwise uniqueness of solutions to \eqref{eq:NS_intro}, given the velocity field $v$ we can always find a Leray-Hopf weak solution $u$ of \eqref{eq:NS_intro} that is progressively measurable with respect to the filtration generated by the Brownian motions $\{W^{k,\alpha}\}$, and the analogue of \autoref{cor:passive} holds true.
Moreover, energy equality gives the analogue of \eqref{eq:anomalous}
\begin{align} \label{eq:anomalous_u}
\limsup_{\nu \to 0} \nu \int_0^1 \| \nabla u_t \|_{L^2}^2 dt > 0 \quad
\mbox{ almost surely}.
\end{align}

In dimension $d=3$, the Navier-Stokes equations may admit multiple Leray-Hopf weak solutions,  and thus it is unknown whether \eqref{eq:NS_intro} possesses Leray-Hopf weak solutions adapted to $\{W^{k,\alpha}\}$ when the noise is specified  a priori. Consequently, a statement akin to that of \autoref{thm:main_NS_d=2} might be void.
In such instances, the existence of progressively measurable Leray-Hopf weak solutions is recovered subject to a possible change of the underlying probability space. This concept  is commonly referred to as probabilistically weak existence.
We obtain the following result:

\begin{thm} \label{thm:main_NS}
Let $d=3$.
Then there exists a countable family of time-dependent velocity fields $\{\sigma_{k,\alpha}\}$ with the following property.
For every countable sets of initial conditions $\mathscr{C} \subset L^2$ with null mean and divergence and viscosities $\mathscr{V} \subset (0,1)$ there exist a countable family of Brownian motions $\{W^{k,\alpha}\}$ and a vector field $v \in C_w([0,1),L^2) \cap L^2_{loc}([0,1),H^1)$ solution of \eqref{eq:veps3} such that for every initial condition $u_0 \in \mathscr{C}$ and viscosity $\nu \in \mathscr{V}$ there exists a progressively measurable Leray-Hopf solution $u$ of \eqref{eq:NS_intro} satisfying $\lim_{t \uparrow 1} \|u_t\|_{L^2} = 0$ almost surely.
\end{thm}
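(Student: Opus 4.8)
The plan is to separate two independent issues. The first is the \emph{total dissipation} of each advected field $u$, which is a pathwise consequence of the structure of $v$ and is therefore already supplied by the mechanism behind \autoref{thm:main_NS_d=2}; the second is the \emph{existence} of a progressively measurable Leray-Hopf solution $u$ driven by one and the same velocity $v$, which is the genuinely new difficulty in three dimensions and will be resolved by a probabilistically weak (martingale/compactness) construction. Accordingly, once we produce, for each pair in $\mathscr{C} \times \mathscr{V}$, a Leray-Hopf solution of \eqref{eq:NS_intro} adapted to the Brownian filtration and advected by a common solution $v$ of \eqref{eq:veps3} whose law coincides with the one constructed in \autoref{thm:main_passive}, the conclusion $\lim_{t \uparrow 1}\|u_t\|_{L^2}=0$ follows from the energy estimate used for \autoref{thm:main_NS_d=2}, applied pathwise. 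The subtlety flagged for \autoref{cor:passive}---that the full-measure dissipation event may depend on $(u_0,\nu)$---is harmless here, since over the countable family it suffices to intersect countably many such events.

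The crucial structural constraint is that $v$, and hence the underlying noise $\{W^{k,\alpha}\}$, must be shared by \emph{all} the countably many equations indexed by $(u_0,\nu)\in\mathscr{C}\times\mathscr{V}$. This forbids treating each pair in isolation: a separate Skorokhod extraction for each pair would in general produce a different limiting velocity, destroying the common-$v$ requirement. I would therefore enumerate $\mathscr{C}\times\mathscr{V}=\{(u_0^{(j)},\nu^{(j)})\}_{j\in\N}$ and run a single joint approximation. On a fixed reference probability space carrying the Brownian motions $\{W^{k,\alpha}\}$, introduce a finite-dimensional Fourier-Galerkin truncation $\Pi_N$ and solve simultaneously the truncated version of \eqref{eq:veps3} for $v^N$ and, for every $j$, the truncated version of \eqref{eq:NS_intro} for $u^{(j),N}$ driven by that same $v^N$. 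At the finite-dimensional level each of these equations has a unique strong solution, so all the fields $v^N$ and $\{u^{(j),N}\}_{j}$ live on one space, are driven by the same Brownian motions, and are progressively measurable.

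Next I would derive the uniform-in-$N$ bounds on each interval $[0,T]\subset[0,1)$, where the piecewise-constant $\varepsilon(t)$ keeps all coefficients bounded. The It\^o energy estimate for \eqref{eq:veps3}---cleanly obtained, if desired, by subtracting the stationary Ornstein-Uhlenbeck process associated with the additive noise and friction---bounds $v^N$ in $L^2(\Omega;C_w([0,T];L^2)\cap L^2([0,T];H^1))$ together with a fractional-in-time estimate from the stochastic integral; the deterministic Leray-Hopf estimate for \eqref{eq:NS_intro}, which is insensitive to the advecting drift since $\int(v\cdot\nabla u)\cdot u\,dx=0$ by $\dvg v=0$, gives the same type of bound for each $u^{(j),N}$ (for fixed $j$, uniformly in $N$). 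Since every marginal family is then tight in its own path space, the joint law of the tuple $(\{W^{k,\alpha}\},v^N,\{u^{(j),N}\}_j)$ is tight in the metrizable countable product of the relevant spaces. Applying the Jakubowski--Skorokhod representation theorem on this product yields a new probability space and almost surely convergent copies converging to a single tuple $(\{\widetilde W^{k,\alpha}\},\widetilde v,\{\widetilde u^{(j)}\}_j)$; in particular there is \emph{one} limiting velocity $\widetilde v$ and \emph{one} family of Brownian motions driving all the $\widetilde u^{(j)}$.

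Finally I would identify the limits in the standard way. Passing to the limit in the martingale formulation shows that $\widetilde v$ solves \eqref{eq:veps3} with the prescribed $\{\sigma_{k,\alpha}\}$ and $\varepsilon(t)$, so its law matches that of the velocity from \autoref{thm:main_passive}; lower semicontinuity of the $H^1$-norm together with strong space-time $L^2$ convergence (from Aubin-Lions applied to the energy and fractional-time bounds), which is what makes the quadratic terms $(u\cdot\nabla)u$ and $(v\cdot\nabla)u$ converge, gives that each $\widetilde u^{(j)}$ is a Leray-Hopf weak solution of \eqref{eq:NS_intro} advected by $\widetilde v$, with adaptedness inherited from the approximations. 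With $\widetilde v$ in hand, the pathwise dissipation of \autoref{thm:main_NS_d=2} applies to each $\widetilde u^{(j)}$, and intersecting the countably many full-measure events yields total dissipation for every $\widetilde u^{(j)}$. The hard part is precisely this simultaneity: securing joint tightness and joint identification for the entire countable family against a \emph{single} realization of the noise and of $v$, rather than the comparatively routine energy bounds or the dissipation statement itself.
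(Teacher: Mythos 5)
Your proposal follows essentially the same route as the paper: the paper also resolves the $d=3$ adaptedness issue by running the Galerkin approximations of $v$ and of all the $u^{u_0,\nu}$, $(u_0,\nu)\in\mathscr{C}\times\mathscr{V}$, \emph{simultaneously} on one space driven by one noise, passing to the limit jointly in the probabilistically weak sense (cf. \cite{FlGa95}), and then applying the dissipation machinery for progressively measurable Leray--Hopf solutions (\autoref{lem:h-4} plus the Borel--Cantelli argument of \autoref{prop:dissipation_transport}) and intersecting the countably many almost-sure events. One caveat of wording only: that dissipation step is not ``pathwise'' (it rests on expectation bounds, Markov's inequality and Borel--Cantelli, and genuinely requires progressive measurability), but since you apply it exclusively to progressively measurable limit solutions whose joint law with the noise and $v$ is the constructed one, this mislabel does not affect the validity of your argument.
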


Contrary to the assertion \autoref{cor:passive}, if we fix the initial condition $u_0$ and viscosity $\nu$, and then consider a deterministic realization of $v$,  total dissipation cannot be universally established for every (deterministic) Leray-Hopf weak solution $u$.
The reason is that we are not able to say whether a given Leray-Hopf weak solution $u$ is the realization $u=\tilde{u}(\omega)$ of a progressively measurable Leray-Hopf weak solution $\tilde{u}$.
Nonetheless, we obtain  total dissipation at time $t=1$ for at least one Leray-Hopf weak solution:

\begin{cor} \label{cor:NS}
Let $d=3$. Then for every zero-mean initial condition $u_0 \in L^2$ with null divergence and viscosity $\nu \in (0,1)$ there exist a deterministic vector field $v=v(u_0,\nu)$ and a Leray-Hopf weak solution $u$ of \eqref{eq:NS_intro} satisfying $\lim_{t \uparrow 1} \|u_t\|_{L^2} = 0$.
\end{cor}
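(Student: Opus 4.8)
The plan is to derive the corollary from \autoref{thm:main_NS} by specializing the data to a single pair $(u_0,\nu)$ and then freezing one favorable realization of the random advecting field, exactly in the spirit of how \autoref{cor:passive} is obtained from its probabilistic counterpart \autoref{thm:main_passive}. Concretely, I would apply \autoref{thm:main_NS} to the singletons $\mathscr{C} = \{u_0\}$ and $\mathscr{V} = \{\nu\}$, which are trivially countable. This furnishes a countable family of Brownian motions $\{W^{k,\alpha}\}$, a random vector field $v \in C_w([0,1),L^2) \cap L^2_{loc}([0,1),H^1)$ solving \eqref{eq:veps3}, together with a progressively measurable Leray-Hopf weak solution $\tilde u$ of \eqref{eq:NS_intro} driven by $v$, enjoying $\lim_{t \uparrow 1} \|\tilde u_t\|_{L^2} = 0$ almost surely.

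Next I would select a single sample point. Intersecting the three full-measure events on which (i) $v$ has the stated path regularity, (ii) $\tilde u$ is pathwise a Leray-Hopf weak solution of \eqref{eq:NS_intro} with advecting field $v$, and (iii) $\lim_{t \uparrow 1} \|\tilde u_t\|_{L^2} = 0$ holds, I obtain a set of full probability, hence nonempty. Choosing any $\omega_0$ therein and setting $v := v(\omega_0)$ and $u := \tilde u(\omega_0)$ produces a deterministic field $v$ depending only on $(u_0,\nu)$ through the selection made, together with a bona fide Leray-Hopf weak solution $u$ of the now-deterministic equation \eqref{eq:NS_intro}, satisfying $\lim_{t \uparrow 1}\|u_t\|_{L^2} = 0$, as desired. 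Note that the corollary only asks for a deterministic $v$ that serves as an admissible advecting field, so no deterministic reinterpretation of the stochastic equation \eqref{eq:veps3} is required.

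The step deserving attention — and the reason the conclusion is genuinely weaker than \autoref{cor:passive} — is that the realization argument only certifies total dissipation for the \emph{specific} solution $u = \tilde u(\omega_0)$ arising as the trace of the progressively measurable solution $\tilde u$, and not for an arbitrary deterministic Leray-Hopf weak solution of \eqref{eq:NS_intro} with velocity $v(\omega_0)$. Indeed, in $d=3$ such solutions need not be unique, so there is no way to identify a given Leray-Hopf weak solution with the realization of a progressively measurable one. This is precisely the obstruction highlighted before the statement, and it confines us to asserting the existence of at least one dissipating solution, which is all that is claimed.
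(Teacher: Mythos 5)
Your proposal is correct and matches the paper's intended argument: the paper leaves \autoref{cor:NS} implicit, as it follows from \autoref{thm:main_NS} applied to the singletons $\mathscr{C}=\{u_0\}$, $\mathscr{V}=\{\nu\}$ by freezing a realization $\omega_0$ in the intersection of the relevant full-measure events, exactly as you do (and in parallel with how \autoref{cor:passive} is derived from \autoref{thm:main_passive}). Your closing remark also correctly identifies why the statement is weaker than \autoref{cor:passive}, namely that in $d=3$ one cannot identify an arbitrary deterministic Leray-Hopf weak solution with the trace of a progressively measurable one, so only the existence of one dissipating solution can be asserted.
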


Finally, let us mention that for $u$ solution of \eqref{eq:NS_intro} in dimension $d=3$ we do not obtain \eqref{eq:anomalous_u}, since we do not know whether energy equality holds and we might only have an energy inequality.
Additional details will be given in \autoref{sec:white_noise} and \autoref{sec:smooth}.
Note that the presence of the nonlinearity $(u\cdot \nabla)u$ in \eqref{eq:NS_intro} is not the main source of difficulties in our proof, nor it is the ultimate reason why the total dissipation occurs. As in the passive scalar case, total dissipation is  rather induced by the advection term $v\cdot\nabla u$.

\subsection{Known deterministic results}\label{sec:lit1}

One of the most extensively studied examples of dissipation enhancing vector fields is that of alternating shear flows -- velocity fields $v$ characterized by  translation invariance along one time-dependent direction.
These flows have been recognized for their capacity to induce enhanced dissipation \cite{BCZ17,CZ20}. More recently,  \cite{DrElIyJe22} established a general criterion for anomalous dissipation based on an inverse interpolation inequality. Specifically, the authors proved that  for every $\alpha\in [0,1)$ there exists a velocity field $v\in C^{\infty}([0,1)\times \T^{d})\cap L^{1}([0,1],C^{\alpha}(\T^{d}))\cap L^{\infty}([0,1]\times \T^{d})$ that yields  anomalous dissipation for initial conditions in $H^{2}$ near harmonics.

A more intricate variant of alternating shear flows involves  rearranging chessboard-like initial data to develop small-scale spatial structures, thus facilitating anomalous dissipation, as presented in \cite{CoCrSo22}. This work demonstrated that for every $\alpha\in [0,1)$ and $p\in [2,\infty]$, there exists an initial datum $\rho_{0}\in C^{\infty}$ and a velocity field $v\in L^{p}([0,1],C^{\alpha}(\T^{2}))$ inducing anomalous dissipation. Similarly, in \cite{BCCDLS22}, a result on anomalous dissipation for the forced Navier-Stokes equations was established. Notably, in both \cite{DrElIyJe22} and \cite{CoCrSo22}, anomalous dissipation occurred at the final time $t=1$. This temporal limitation was addressed in \cite{ArVi23+} through a fractal homogenization approach. For every $\alpha\in (0,1/3)$, the existence of a vector field $v\in C([0,1],C^{\alpha}(\T^{d}))\cap C^{\alpha}([0,1],C( \T^{d}))$ was obtained, ensuring anomalous dissipation for every initial datum in $H^{1}$.

In the aforementioned works, the advecting velocity was constructed somewhat ad hoc, without a priori reference to any specific fluid dynamics model. Incorporating such a requirement imposes additional constraints, rendering the problem even more challenging. This challenge was effectively tackled in the recent work \cite{BuSzWu23+}, which appeared after the completion of the first version of our manuscript. Here, weak solutions to Euler equations inducing anomalous dissipation were identified. Specifically, the authors demonstrated that for every $\alpha\in (0,1/3)$, there exists a weak solution $v\in C^{\alpha}([0,1]\times\T^{3})$ to the Euler equations, ensuring anomalous dissipation in \eqref{eq:passive_scalar_intro} for all initial conditions in $H^{1}$.

Moreover, other mechanisms have been shown to produce anomalous dissipation for particular initial data in the Navier-Stokes equations with force, as documented in \cite{BrDL22,JeYo21,JeYo22}.

\subsection{Known stochastic results}\label{sec:lit2}

Kraichnan proposed white-in-time random flows as a model of synthetic scalar turbulence in \cite{Kra68}, which has since been extensively studied (see \cite{MaKr99} and references therein, or the more recent \cite{GeYa21+}). The Kraichnan model is characterized by a specific spatial structure and demonstrates the phenomenon of Lagrangian spontaneous stochasticity, observed in \cite{BGK98}. This phenomenon is intimately related to scalar anomalous dissipation via a fluctuation-dissipation relation. Under certain assumptions, \cite{DE17} proved  the equivalence between these two notions, thus establishing anomalous dissipation for the Kraichnan model.

Furthermore, more general white-in-time random flows have gained attention in recent years. Thanks to works such as \cite{FlGaLu21+,FlGaLu22}, it is now relatively well-understood in stochastic literature that a white-in-time velocity field $v$ can induce mixing and  dissipation enhancement in \eqref{eq:passive_scalar_intro}, especially when $v$ is concentrated solely at very high Fourier modes and each mode is excited with low intensity. This dissipation enhancement stems from the presence of a Stratonovich-to-It\=o corrector in the weak formulation of \eqref{eq:passive_scalar_intro}, which, under suitable geometric conditions on the spatial structure of the noise, acts as a large multiple of the Laplacian, facilitating the proof of smallness of the $H^{-1}$ norm of the solution $\rho$. It is crucial to note that while this \emph{transport noise} formally preserves energy, its algebraic reformulation with the Stratonovich-to-It\=o corrector provides a convenient framework for effectively quantifying energy transfer to high wavenumbers.

Additionally,  the transport noise can be engineered to produce arbitrarily strong dissipation enhancement on short time intervals, as discussed in \cite{FlGaLu22}. However, until now, it remained unclear whether this dissipation could be made anomalous, i.e., uniform in the vanishing viscosity limit $\nu\to0$.

Moreover, the literature, e.g., \cite{MaKr99}, has argued against white-in-time velocity fields being physically relevant. This poses additional challenges, as no Stratonovich-to-It\=o corrector appears in the weak formulation of \eqref{eq:passive_scalar_intro}, rendering previous considerations inapplicable. Nevertheless, if $v$ exhibits a similar spatial structure to the aforementioned transport noise, it is reasonable to expect the same energy transfer to high wavenumbers and consequent dissipation enhancement to occur, as discussed in \cite{Pa22}.

\subsection{Our contribution}\label{sec:lit3}

The primary contribution of the present paper lies in capturing the phenomenon of enhanced and anomalous dissipation without relying on the Stratonovich-to-It\=o corrector. The core idea is to define $v$ as a solution of a stochastic differential equation driven by a strong external random forcing and a strong friction. These two elements enable $v$ to mimic the behavior of a transport noise in the weak formulation of \eqref{eq:passive_scalar_intro}, despite $v$ retaining a positive decorrelation time. With sufficiently strong external forcing, we can even allow $v$ to satisfy the forced Navier-Stokes equations or other equations relevant to fluid dynamics. This is particularly significant as it distinguishes our approach from most existing examples of anomalous dissipation, with the exception of \cite{BuSzWu23+}.

To quantify the induced decay of the $H^{-1}$ norm of $\rho$, we add a small perturbation to the solution of \eqref{eq:passive_scalar_intro}, reintroducing temporal roughness into the equation that produced  the Stratonovich-to-It\=o corrector  in the  case of  transport noise. The perturbation, inspired by homogenization methods and ideas from \cite{DePa22+}, ultimately results in the convective term homogenizing to a negative definite operator, which coincides with the same Stratonovich-to-It\=o corrector observed with transport noise, plus small reminders. Through suitable tuning of the parameters defining the random forcing acting on $v$, we establish enhanced, and eventually anomalous, dissipation in \eqref{eq:passive_scalar_intro}.

In terms of regularity over the entire time domain $[0,1]$, our construction represents an intermediate result between the white-in-time examples mentioned above and the purely deterministic results from \cite{DrElIyJe22,CoCrSo22,ArVi23+, BuSzWu23+}. Specifically, we eliminate the need for a white-in-time velocity but encounter integrability issues near $t=1$, unlike the deterministic examples mentioned above. This  can be intuitively understood as follows: as $t\to1$, we approximate the transport noise, resulting in time regularity of only $C^{-1/2^{-}}$ uniformly in $[0,1]$. Nevertheless, the noise simultaneously shifts to higher and higher modes and eventually vanishes as $t\to1$. This can be employed in the case of the Ornstein-Uhlenbeck process \eqref{eq:veps2_intro} to trade space regularity for time integrability. More precisely, as shown in \autoref{rmk:OU}, it holds true that $v\in L^{\infty}([0,1];H^{-\epsilon})$ for a suitably chosen $\epsilon>0$.
The Navier-Stokes setting \eqref{eq:veps3} is more complex due to the nonlinearity, and this argument does not  apply.

However, we can draw a parallel between our construction and the relaxation enhancing flows described in \cite{CKRZ08}, whose suitable acceleration was also shown to produce total dissipation in \cite{R24} by choosing a time dependent coefficient in the spirit of our construction. Notably, \cite{R24} mentions that total dissipation is unattainable for velocity fields $v\in L^{\infty}([0,1]\times\T^{d})$ at any diffusivity $\nu>0$. Therefore, in our scenario, solutions of the forced Navier-Stokes equations blow up in a relaxation enhancing manner almost surely, meaning they explode in the right way to yield mixing and diffusion. A generic velocity field experiencing blow-up at $t=1$ would not be expected to exhibit such behavior.

In summary, we have identified a novel mechanism for inducing scalar anomalous dissipation that: i) is potent enough to achieve total dissipation in finite time; ii) is universal, as a single velocity field $v$ is effective for every initial condition in $L^{2}$ and every viscosity value; and iii) is robust enough to extend to the nonlinear case, such as the Navier-Stokes equations, and likely to many other models of fluid dynamics interest.

\subsection{Organization of the paper}
The paper is organized as follows.
In \autoref{sec:ideas}, we describe the structure of our noise and outline the main ideas of how to reintroduce the roughness to profit from the hidden Stratonovich-to-It\=o corrector.
\autoref{sec:white_noise} addresses the preliminary problem of white-in-time velocity $v$, presented in \autoref{prop:dissipation_transport}. This is crucial for our analysis since it allows us to isolate the difficulties arising from the Leray projection $\Pi$ in the expression of the Stratonovich-to-It\=o corrector in \autoref{ssec:strato} and the main strategy behind the proof of total dissipation in \autoref{ssec:diss}.
In \autoref{ssec:non-diss}, we also demonstrate  that in dimension $d=3$, there exist weak solutions of \eqref{eq:NS_intro} perturbed by transport noise that are not Leray-Hopf and do not dissipate their kinetic energy at time $t=1$. This is reasonable to expect, since non Leray-Hopf weak solutions do not need to satisfy the energy inequality, see \eqref{eq:energy_ineq}.
\autoref{sec:smooth} explores  the more realistic scenario of $v$ being a solution of the forced Navier-Stokes equations.
The primary focus of this section is to rigorously validate  the heuristic arguments presented in \autoref{ssec:rough}, particularly in the more difficult case of solutions to \eqref{eq:NS_intro}.
The proof of \autoref{thm:OU} when $v$ solves \eqref{eq:veps2_intro} descends easily from the Navier-Stokes case \eqref{eq:veps3}, see \autoref{rmk:OU}.

\section{Main ideas of the construction}\label{sec:ideas}

\subsection{Structure of the noise} \label{ssec:structure}

Let us focus on the case $d=3$ only. The case $d=2$ is readily covered in a similar fashion and we omit it for the sake of brevity.

We construct the noise in \eqref{eq:veps3} taking inspiration from \cite{Ga20} and \cite{FlLu21}.
Let $\Z^3_0 := \Z^3 \setminus \{\textbf{0}\}$ and let $\{\Lambda,-\Lambda\}$ be a partition of $\Z^3_0$.
Let us introduce a family $\{ B^{k,\alpha} \}_{k \in \Z^3_0, \alpha \in \{1,2\}}$ of independent  standard Brownian motions on a filtered probability space $(\Omega,\mathcal{F},\{\mathcal{F}_t\}_{t \geq 0},\PP)$ satisfying the usual conditions.
Define the complex-valued Brownian motions
\begin{align*}
W^{k,\alpha} := \begin{cases}
B^{k,\alpha} + i B^{-k,\alpha}, & \mbox{ if } k \in \Lambda,
\\
B^{k,\alpha} - i B^{-k,\alpha}, & \mbox{ if } k \in -\Lambda,
\end{cases}
\end{align*}
whose quadratic covariation is given by
$[W^{k,\alpha},W^{l,\beta}]_t=2t\delta_{k,-l}\delta_{\alpha,\beta}$.
For every $k \in \Lambda$ let $\{a_{k,1},a_{k,2}\}$ be orthonormal basis of $k^\perp$ such that $\{a_{k,1},a_{k,2},k/|k|\}$ is right-handed. Let $a_{k,\alpha} = a_{-k,\alpha}$ for every $k \in \Z^3_0$.
The coefficients $\{\sigma_{k,\alpha}\}_{k \in \Z^3_0, \alpha \in \{1,2\}}$ are given by
\begin{align*}
\sigma_{k,\alpha}(x,t) :=
\theta_k(t) a_{k,\alpha} e_k(x),
\qquad
e_k(x) := e^{2\pi i k \cdot x},
\end{align*}
for some suitable intensity coefficients $\theta_k : \R_+ \to \R$.
Notice that $\{a_{k,\alpha} e_k\}_{k,\alpha}$ is a complete orthonormal system of the space $H$ of zero-mean, divergence-free, square integrable velocity fields on the torus; in particular $\sigma_{k,\alpha}(\cdot,t)$ is $H$ valued for every $k,\alpha$ and $t$.
For $s \in \R$, denote $H^s$ the $H$-based Sobolev space of velocity fields on the torus with zero average and null divergence in the sense of distributions.

We introduce the dependence of $\theta_k$ on time in order to switch the noise on at higher and higher Fourier modes as time approaches $t=1$.
To rigorously describe this, let $\{\tau_q\}_{q \in \N}$ be a sequence of times such that $\tau_0=1$ and $\tau_q \to 0$ monotonically as $q \to \infty$.
On the time interval $(1-\tau_q,1-\tau_{q+1}]$ we choose the coefficients $\theta_k(t)=\theta_k^q$ as:
\begin{align*}
\theta_k^q &:= \mathbf{1}_{\{N_q \leq |k|\leq 2N_q\}},
\end{align*}
where $N_q$ is a sequence such that $N_q \to \infty$ sufficiently fast as $q \to \infty$.
We also define
\begin{align*}
\sum_{k \in \Z^3_0} (\theta_k^q)^2
=: \kappa_q
&\sim
N_q^3  \to \infty
\qquad
\mbox{ as } q \to \infty.
\end{align*}

\subsection{Reintroducing roughness at small time scales} \label{ssec:rough}
Let us describe the heuristics behind the proof of \autoref{thm:main_passive}, which is based on ideas from \cite{DePa22+}.
For the sake of simplicity, here we focus on the scalar case but we shall see that the same arguments apply to the Navier-Stokes case.
Fix $q \in \N$ and consider the time interval $(1-\tau_q,1-\tau_{q+1}]$. Suppose on this time interval $\varepsilon(t) \equiv \varepsilon_q$ and we can decompose a solution $v$ of \eqref{eq:veps3} as
\begin{align*}
v = \varepsilon^{-1/2}_q w + r,
\end{align*}
where $w$ is the noisy part of $v$, satisfying
\begin{align*}
dw
=
-\varepsilon^{-1}_q w dt
+
\varepsilon^{-1/2}_q \sum_{k,\alpha} \sigma_{k,\alpha} dW^{k,\alpha},
\end{align*}
and is renormalized such that $w$ is on average of order one, whereas $r$ is a remainder.
Then, if $\varepsilon_q \ll 1$ the dynamics \eqref{eq:passive_scalar_intro}
\begin{align*}
d \rho
=
-
\varepsilon_q^{-1/2} w \cdot \nabla \rho \,dt
-
r \cdot \nabla \rho \,dt
+
\nu \Delta \rho \,dt
\end{align*}
has a potentially large term $\varepsilon_q^{-1/2} w \cdot \nabla \rho$.
However, this potentially troublesome term can be cancelled out by adding correctors to $\rho$, following ideas from  \cite{DePa22+}.
Roughly speaking, these correctors are auxiliary stochastic processes, usually taking values in a space of distributions, that formally compensate large terms in the dynamics of $\rho$.

In our particular case, we consider the dynamics of the process $\rho - \varepsilon_q^{1/2} w \cdot \nabla \rho$ instead. It reads as
\begin{align*}
d \left( \rho - \varepsilon_q^{1/2}  w \cdot \nabla \rho \right)
=
- r \cdot \nabla \rho \,dt
+
\nu \Delta \rho \,dt
+
w \cdot \nabla (w \cdot \nabla \rho) \,dt
-
\sum_{k,\alpha} \sigma_{k,\alpha} \cdot \nabla \rho \, dW^{k,\alpha}
+
O(\varepsilon_q^{1/2}).
\end{align*}
Here we have the term $w \cdot \nabla (w \cdot \nabla \rho)$ which oscillates extremely fast with respect to time, and together with the quadratic interaction in $w$ this produces the extra dissipation. To see this, consider another corrector defined as $\frac{\varepsilon_q}{2} w \cdot \nabla (w \cdot \nabla \rho) $. It evolves according to
\begin{align*}
\frac{\varepsilon_q}{2}
d \left( w \cdot \nabla (w \cdot \nabla \rho) \right)
&=
- w \cdot \nabla (w \cdot \nabla \rho) \,dt
+
\sum_{k,\alpha} \sigma_{k,\alpha} \cdot \nabla (\sigma_{-k,\alpha} \cdot \nabla \rho) \,dt
+
O(\varepsilon_q^{1/2}),
\end{align*}
which compensates exactly the fast oscillations $w \cdot \nabla (w \cdot \nabla \rho)$ in the dynamics of $\rho - \varepsilon_q^{1/2}  w \cdot \nabla \rho$ and simultaneously makes appear the Stratonovich-to-It\=o corrector $\sum_{k,\alpha} \sigma_{k,\alpha} \cdot \nabla (\sigma_{-k,\alpha} \cdot \nabla \rho)$.

If the parameters $\sigma_{k,\alpha}$ are tuned properly, this procedure permits to prove smallness of a negative Sobolev norm of the process
$$\rho - \varepsilon_q^{1/2}  w \cdot \nabla \rho  + \frac{\varepsilon_q}{2}  w \cdot \nabla (w \cdot \nabla \rho) + V \sim \rho,$$
from which we deduce dissipation of the $L^2$ norm of $\rho$ via an inverse interpolation inequality.
Here $V$ is another auxiliary process, of average size proportional to $\varepsilon_q$, which helps to control the term $-r \cdot \nabla \rho$ in the dynamics of $\rho$.
For more details, we refer the reader to \autoref{sec:smooth}, where the rigorous argument is described.

\begin{rmk} \label{rmk:OU}
The same arguments apply a fortiori to $v$ solution of \eqref{eq:veps2_intro}. Indeed, in this case one has $r=0$ in the decomposition $v=\varepsilon_q^{-1/2}w+r$.
Furthermore, in this scenario we are able to trade space regularity for time integrability and  show for instance that $v\in L^{\infty}([0,1],H^{-\epsilon})$ almost surely for a suitably chosen $\epsilon>0$.  Specifically, the Fourier coefficients  $v_{k, \alpha}
:= \langle v, a_{k, \alpha} e_k \rangle$   of  $v$ satisfy
\[ d v_{k, \alpha} + \varepsilon^{- 1} v_{k, \alpha} d t  = \varepsilon^{- 1} \theta_k d W^{k,
   \alpha}  . \]
Thus, fixing  $k,\alpha$ determines $q=q(k)$ via $N_{q}\leq|k|\leq 2N_{q}$ together with the time interval $(1-\tau_{q},1-\tau_{q+1}]$ where the above right hand side is non-zero.
In other words, integrating  the above equation in time, its right hand side remains bounded in time in expectation even in the limit $t \rightarrow
1$. However, the bound depends on $k, \alpha$: testing the equation by $v_{k,
\alpha}$ leads to
\[ \frac{1}{2} | v_{k, \alpha} (t) |^2 + \varepsilon^{- 1} \int_0^t | v_{k,
   \alpha}|^2 d s =  \int_{1-\tau_{q}}^{t\wedge (1-\tau_{q+1})} \varepsilon_{q}^{- 1} \theta^{q}_k v_{k, \alpha} d W^{k, \alpha} +
   \frac{1}{2} \int_{1-\tau_{q}}^{t\wedge (1-\tau_{q+1})}  \varepsilon_{q}^{- 2} (\theta^{q}_k)^{2}  d s \]
and the It\=o correction term explodes as $k \rightarrow \infty$, i.e. $q
\rightarrow \infty$. Applying Burkholder-Davis-Gundy's inequality for the stochastic integral, we  deduce
\begin{align*}
\mathbb{E}\sup_{t\in[0,1]}\|v_{t}\|_{H^{-\epsilon}}^{2}&=  \mathbb{E}\sup_{t\in[0,1]} \sum_{k,\alpha} k^{-2\epsilon}| v_{k, \alpha} (t) |^2\lesssim  \sum_{k}   k^{-2\epsilon}\varepsilon_{q(k)}^{- 2}\lesssim 1
\end{align*}
provided $k^{-2\epsilon}\ll \varepsilon_{q(k)}^{2}$ in order to cancel the blow up coming from $\varepsilon_{q(k)}^{-2}$ and to make the sum converging. On the one hand, we have $k\sim N_{q(k)}$. On the other hand, it can be seen in \autoref{sec:smooth} that for the case of the Ornstein-Uhlenbeck process we may choose $\varepsilon_{q}\sim N_{q}^{-K}$ for some integer $K\in\mathbb{N}$. This implies the condition $N_{q}^{-\epsilon}\ll N_{q}^{-K}$ hence $\epsilon>K$.

\end{rmk}

Hereafter, in order to keep the discussion as concise as possible, we prefer to restrict ourselves to the vectorial case \eqref{eq:NS_intro}, which is technically more demanding than the scalar case \eqref{eq:passive_scalar_intro} due to the more complex form of the Stratonovich-to-It\=o corrector.
Also, in view of \autoref{rmk:OU} we will only consider $v$ solution of \eqref{eq:veps3}, being the case when $v$ solves \eqref{eq:veps2_intro} easier.
Major differences in the proofs will be detailed when necessary.

\section{Total dissipation by transport noise} \label{sec:white_noise}

In this section we shall focus on the  case of transport noise: $$v(x,t) = \sum_{k,\alpha} \sigma_{k,\alpha}(x,t) \partial_t W^{k,\alpha},$$ where the coefficients $\sigma_{k,\alpha}$ are as in \autoref{ssec:structure} and the white noise $\partial_t W^{k,\alpha}$ is meant in the Stratonovich sense.
This is a good starting point, propaedeutic to the study of \eqref{eq:NS_intro} advected by a more realistic velocity field solution $v$ solution of \eqref{eq:veps3}.
With this choice of $v$, equation \eqref{eq:NS_intro} reads as
\begin{align} \label{eq:NS_transport}
\begin{cases}
d u
+
(u \cdot \nabla) u \,dt
+
\sum_{k,\alpha} (\sigma_{k,\alpha} \cdot \nabla) u
\circ dW^{k,\alpha}
+
\nabla p \,dt
=
\nu \Delta u \,dt
\\
\dvg u = 0.
\end{cases}
\end{align}

The symbol $\circ\, dW^{k,\alpha}$ in the equation above is a notational shorthand for the Stratonovich interpretation of the stochastic integral. This is a sensible modelling choice since Stratonovich integration satisfies the chain rule and thus the advection term $\sum_{k,\alpha} (\sigma_{k,\alpha} \cdot \nabla) u \circ dW^{k,\alpha}$ is formally energy preserving.

The goal of this section is to prove the following:
\begin{prop} \label{prop:dissipation_transport}
We can choose parameters $\tau_q$, $N_q$ and $\kappa_q \sim N_q^3$, $q \in \N$, such that the following holds true.
Every probabilistically weak, progressively measurable Leray-Hopf weak solution of $\eqref{eq:NS_transport}$ with zero-mean, divergence-free initial condition $u_0 \in L^2$ and viscosity $\nu \in (0,1)$ satisfies $\lim_{t \uparrow 1} \|u_t\|_{L^2} = 0$ almost surely.
\end{prop}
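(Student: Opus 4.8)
The plan is to rewrite \eqref{eq:NS_transport} in It\^o form and extract from the Stratonovich correction an enhanced dissipation of order $\kappa_q$ that is invisible in the $L^2$ energy balance but drives a negative Sobolev norm to zero. On the interval $(1-\tau_q,1-\tau_{q+1}]$ the It\^o formulation reads
\[
d u + \Pi[(u\cdot\nabla)u]\,dt + \sum_{k,\alpha}\Pi[(\sigma_{k,\alpha}\cdot\nabla)u]\,dW^{k,\alpha} = \nu\Delta u\,dt + \mathcal{L}_q u\,dt,
\]
with corrector $\mathcal{L}_q u = \sum_{k,\alpha}\Pi[(\sigma_{k,\alpha}\cdot\nabla)\Pi[(\sigma_{-k,\alpha}\cdot\nabla)u]]$. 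Since both the nonlinearity and the Stratonovich transport term preserve $L^2$, the energy balance only feels the viscosity, so that $\|u_t\|_{L^2}^2+2\nu\int_s^t\|\nabla u_r\|_{L^2}^2\,dr\le\|u_s\|_{L^2}^2$ and in particular $t\mapsto\|u_t\|_{L^2}$ is almost surely non-increasing. Thus it suffices to show that the $L^2$ norm is contracted by a definite factor on each $(1-\tau_q,1-\tau_{q+1}]$, and the mechanism producing this contraction is only visible after passing to the $H^{-1}$ norm.

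First I would carry out the computation of \autoref{ssec:strato}. Using that $\{a_{k,1},a_{k,2}\}$ is an orthonormal basis of $k^\perp$ one has $\sum_\alpha a_{k,\alpha}\otimes a_{k,\alpha}=\mathrm{Id}-|k|^{-2}k\otimes k$, and isotropy of the active shell $\{N_q\le|k|\le 2N_q\}$ in $\R^3$ gives $\sum_{k,\alpha}(\theta_k^q)^2(a_{k,\alpha}\cdot m)^2=\tfrac23\kappa_q|m|^2$ for every $m\in\Z^3$. For a scalar this identifies the corrector exactly with $\tfrac23\kappa_q\Delta$; in the vectorial case the two Leray projections in $\mathcal{L}_q$ must be tracked, and the goal is to show that $\mathcal{L}_q=\tfrac23\kappa_q\Delta+R_q$ on divergence-free fields, with $R_q$ a symmetric operator whose contribution is lower order in $N_q$. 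Isolating this projection correction is the first technical point, and is precisely the reason the transport-noise case is treated before the Navier--Stokes velocity in \autoref{sec:smooth}.

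The core is the estimate of \autoref{ssec:diss}. Applying It\^o's formula to $\|u_t\|_{H^{-1}}^2$, the corrector contributes the drift $-\tfrac43\kappa_q\|u\|_{L^2}^2$ (up to the projection correction), while the quadratic variation of the martingale is purely off-diagonal: it couples the Fourier modes $m$ and $m+2k$ with $|k|\sim N_q$, and a direct estimate bounds it by $C\kappa_q N_q^{-2}\|\nabla u\|_{L^2}^2$; the nonlinear term is controlled in $H^{-1}$ by $C\|u\|_{L^2}^2\|u\|_{H^1}$. To absorb the $\|\nabla u\|_{L^2}^2$ error, which is of higher order than the surviving dissipation, I would not work with $\|u\|_{H^{-1}}^2$ alone but with the combined functional $\Phi:=\|u\|_{H^{-1}}^2+\lambda\|u\|_{L^2}^2$ with $\lambda\sim\kappa_q(\nu N_q^2)^{-1}$, exploiting the $-2\nu\lambda\|\nabla u\|_{L^2}^2$ coming from the $L^2$ balance to cancel the off-diagonal error. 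Using $\kappa_q\sim N_q^3$ and $\|u\|_{L^2}^2\ge\|u\|_{H^{-1}}^2$ this yields, after taking expectations,
\[
\frac{d}{dt}\,\E{\Phi_t}\le -c\,\nu N_q^2\,\E{\Phi_t}+(\text{nonlinear remainder}),
\]
the remainder being lower order for fixed $\nu$ as $q\to\infty$.

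It remains to conclude and choose the parameters. Since $\Phi\sim\lambda\|u\|_{L^2}^2$ with $\lambda\gg1$, Gr\"onwall on $(1-\tau_q,1-\tau_{q+1}]$ gives $\E{\|u_{1-\tau_{q+1}}\|_{L^2}^2}\le\big(e^{-c\nu N_q^2|I_q|}+o(1)\big)\,\E{\|u_{1-\tau_q}\|_{L^2}^2}$ with $|I_q|=\tau_q-\tau_{q+1}$, and iterating,
\[
\E{\|u_{1-\tau_{q+1}}\|_{L^2}^2}\le\|u_0\|_{L^2}^2\,\exp\Big(-c\nu\sum_{j\le q}N_j^2|I_j|\Big)+o(1).
\]
Choosing $\tau_q$ and $N_q$ so that $\sum_q|I_q|=1$ while $N_q^2|I_q|\to\infty$ (for instance $N_q=2^q$ and $|I_q|\sim2^{-q}$) makes the right-hand side tend to $0$ as $q\to\infty$ for every fixed $\nu\in(0,1)$; this is where the uniformity in $\nu$ enters, since the same sequences work for all $\nu>0$. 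Hence $\E{\|u_t\|_{L^2}^2}\to0$ as $t\uparrow1$, and combined with the almost sure monotonicity of $\|u_t\|_{L^2}$ and convergence along a subsequence one obtains $\lim_{t\uparrow1}\|u_t\|_{L^2}=0$ almost surely. I expect the main obstacle to be the enhanced dissipation estimate itself: carrying the Leray projection through $\mathcal{L}_q$, establishing the $N_q^{-2}$ gain in the quadratic variation via its off-diagonal structure, and making the combined-functional Gr\"onwall argument rigorous for merely weak solutions (which satisfy only the It\^o/energy inequality) while keeping the nonlinear term under control uniformly in the construction.
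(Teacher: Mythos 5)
Your proposal follows a genuinely different route from the paper's. The paper never applies It\^o's formula to a norm of $u$: it works with the \emph{mild} formulation for the low-mode projection $u^L=\Pi_L u$ only, estimates $\mathbb{E}\sup_t\|u^L_t\|_{H^{-1}}^2$ via the semigroup generated by $(\nu+2\kappa_q/5)\Delta$ (\autoref{lem:h-1}), and then converts $H^{-1}$-smallness into $L^2$-smallness \emph{pathwise}: Markov's inequality, the interpolation $\|u\|_{L^2}^4\le\|u\|_{H^1}^2\|u\|_{H^{-1}}^2$ fed into the energy inequality \eqref{eq:energy_ineq} through the Vol'pert chain rule for the $BV$ envelope $E(t)$ of the energy, and Borel--Cantelli. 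You instead run a Gr\"onwall argument in expectation for the functional $\Phi=\|u\|_{H^{-1}}^2+\lambda\|u\|_{L^2}^2$ with $\lambda\sim\kappa_q(\nu N_q^2)^{-1}$. The mechanism you identify is the right one, and the absorption of the $H^{-1}$ backscatter (which is indeed $\lesssim\kappa_q N_q^{-2}\|\nabla u\|_{L^2}^2$, including on modes comparable to $N_q$) by the $\lambda$-weighted viscous dissipation is correct in order of magnitude; your passage from decay of expectations to almost sure total dissipation also works, provided it is run on the monotone envelope $E(t)$ rather than on $\|u_t\|_{L^2}$ itself, which for Leray--Hopf solutions may jump and is monotone only along the exceptional-free set $\mathcal{T}$.

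There are, however, two concrete problems. First, your stated goal for the corrector, namely $\mathcal{L}_q=\tfrac23\kappa_q\Delta+R_q$ with $R_q$ \emph{lower order in $N_q$} on divergence-free fields, is false, and pursuing it literally would fail. As the paper's computation in \autoref{ssec:strato} shows, the Leray projections contribute at leading order: on low modes $|\ell|\le N_q^{1-\delta}$ one has $S_q^\perp(u^L)\approx\tfrac{4}{15}\kappa_q\Delta u^L$, so the effective operator is $\tfrac25\kappa_q\Delta$, not $\tfrac23\kappa_q\Delta$ plus a negligible remainder (see \eqref{eq:bound_strat}); and on modes comparable to $N_q$ the corrector is not close to any multiple of the Laplacian at all, which is precisely why the paper introduces $\Pi_L$ and handles $u^H$ by the trivial bound $\|u^H\|_{H^{-1}}\lesssim N_q^{\delta-1}\|u\|_{L^2}$. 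Your It\^o argument applies the corrector to \emph{all} of $u$, so what you actually need is an all-mode one-sided bound $\langle u,\mathcal{L}_qu\rangle_{H^{-1}}\le-c\,\kappa_q\|u\|_{L^2}^2$. This is plausibly true, but by a different argument than the one you propose: writing the corrector mode by mode,
\begin{align*}
(\mathcal{L}_q u)_m=-4\pi^2\sum_{k,\alpha}(\theta_k^q)^2 (a_{k,\alpha}\cdot m)^2\, P_{m^\perp}P_{(m-k)^\perp}u_m,
\end{align*}
one uses $a_{k,\alpha}\cdot(m-k)=a_{k,\alpha}\cdot m$, the orthogonality $u_m\perp m$, and a genericity/counting argument over the shell to get $\langle u_m,(\mathcal{L}_qu)_m\rangle\le -c\,\kappa_q|u_m|^2$; the constant is not $\tfrac23$ and no "Laplacian plus lower order'' structure is available. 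Second, the analytic backbone of your scheme, the It\^o balance for $\|u\|_{H^{-1}}^2$ (equivalently for $\Phi$), is asserted but not justified for merely Leray--Hopf weak solutions; it would require a mode-by-mode It\^o formula plus a summation argument, whereas the paper's mild-formulation approach avoids differentiating any nonlinear functional of $u$ altogether. Related to this, note that the $L^2$ energy inequality is \emph{postulated} in the definition of Leray--Hopf solutions and cannot be derived from the equation, as your phrasing suggests: \autoref{ssec:non-diss} exhibits weak solutions violating it which do not dissipate.
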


A similar statement holds true in the passive scalar case.
Before moving on, let us clarify what we mean by probabilistically weak Leray-Hopf weak solution of \eqref{eq:NS_transport} in this case.

\begin{definition} \label{def:sol_u}
A probabilistically weak, progressively measurable Leray-Hopf weak solution of \eqref{eq:NS_transport} is defined as a probability space $(\Omega,\mathcal{F},\{\mathcal{F}_t\}_{t\geq 1}, \PP)$ supporting a family of i.i.d. Brownian motions $\{W^{k,\alpha}\}_{k,\alpha}$ and a progressively measurable stochastic process $u : \Omega \to C_w([0,1),H) \cap L^2([0,1),H^1)$ almost surely such that, for every divergence-free test function $f \in C_c^\infty(\T^3 \times [0,1),\R^3)$ it holds almost surely for every $0\leq s<r < 1$
\begin{align*}
\langle u_r , f_r \rangle - \langle u_s , f_s \rangle
=
\int_s^r \langle u_t , \partial_t f_t + (u_t \cdot \nabla) f_t + \nu \Delta f_t\rangle dt
+
\sum_{k,\alpha}
\int_s^r \langle u_t , (\sigma_{k,\alpha}(\cdot,t) \cdot \nabla) f_t \rangle \circ dW^{k,\alpha}_t,
\end{align*}
and for almost every $\omega \in \Omega$ there exists a full Lebesgue measure set $\mathcal{T} \subset [0,1)$ such that $0 \in \mathcal{T}$ and for every $r \in \mathcal{T}$, $r<t<1$ the following energy inequality holds almost surely
\begin{align} \label{eq:energy_ineq}
\|u_t\|_{L^2}^2
+
2\nu\int_r^t \|\nabla u_s\|_{L^2}^2 ds
\leq
\|u_r\|_{L^2}^2.
\end{align}
\end{definition}

We can rewrite the Stratonovich integral in \autoref{def:sol_u} in the equivalent It\=o form as follows: for every divergence-free test function $f \in C_c^\infty(\T^3 \times [0,1),\R^3)$
\begin{align*}
\langle u_r , f_r \rangle - \langle u_s , f_s \rangle
&=
\int_s^r \langle u_t , \partial_t f_t + (u_t \cdot \nabla) f_t + \nu \Delta f_t + S(f_t)\rangle dt
\\
&\quad+
\sum_{k,\alpha}
\int_s^r \langle u_t , (\sigma_{k,\alpha}(\cdot,t) \cdot \nabla) f_t \rangle dW^{k,\alpha}_t,
\end{align*}
where $S$ is the so-called Stratonovich-to-It\=o (or simply Stratonovich) corrector, described in details in the next subsection. Existence of solutions according to \autoref{def:sol_u} has been shown in \cite{FlGa95}.

Finally, let us comment briefly on the energy inequality \eqref{eq:energy_ineq}.
It ultimately is the reason why the transfer of energy to high wavenumbers increases the rate of dissipation in solutions of the Navier-Stokes equations, since a larger time integral of $\| \nabla u_s \|_{L^2}^2$ necessitates a smaller $\|u_t\|_{L^2}^2$ to ensure the validity of \eqref{eq:energy_ineq}.
However, the energy inequality can not be deduced from the equation \eqref{eq:NS_transport} itself, and must be postulated a priori: this is the difference between weak solutions and Leray-Hopf weak solutions of \eqref{eq:NS_transport}.
In particular, weak solutions of \eqref{eq:NS_transport} that do not satisfy the energy inequality do not need to dissipate their energy, and explicit examples have been constructed recently by convex integration techniques \cite{BuVi19}, \cite{BuMoSz20}, \cite{Pa23+}.
In \autoref{ssec:non-diss} we give an example following the construction of \cite{Pa23+}.

In the passive scalar case, the weak formulation of the equation is obtained \emph{mutatis mutandis}, but by linearity and pathwise uniqueness \cite[Theorem 5.2]{Fl95} the stronger energy equality holds true: one can take $\mathcal{T}=[0,1)$ and has almost surely for every $t>r$
\begin{align*}
\|\rho_t\|_{L^2}^2
+
2\nu\int_r^t \|\nabla \rho_s\|_{L^2}^2 ds
=
\|\rho_r\|_{L^2}^2.
\end{align*}
This can be obtained for instance taking a space mollification $\rho^\epsilon$ of the solution, using a commutator estimate à la Di Perna-Lions \cite[Lemma II.1]{DPLi89} to deduce energy equality for $\rho^\epsilon$ up to in infinitesimal error, and then passing to the limit $\epsilon \to 0$. The passage to the limit is justified by local-in-time smoothness of the coefficients $\{\sigma_{k,\alpha}\}$.

\subsection{Stratonovich corrector} \label{ssec:strato}
Equation \eqref{eq:NS_transport} can be rewritten without the pressure term by using the Leray projector $\Pi=Id+\nabla (-\Delta)^{-1} \dvg$ onto the divergence-free velocity fields:
\begin{align} \label{eq:NS_transport_Leray}
d u
+
\Pi [(u \cdot \nabla) u] \,dt
+
\sum_{k,\alpha} \Pi [(\sigma_{k,\alpha} \cdot \nabla) u ] \circ dW^{k,\alpha}
=
\nu \Delta u \,dt.
\end{align}
The advantage of \eqref{eq:NS_transport_Leray} compared to \eqref{eq:NS_transport} is that the Stratonovich-to-It\=o corrector $S$ can be readily computed,\footnote{The problem with formulation \eqref{eq:NS_transport} is related to the difficulty of computing the quadratic covariation between the Brownian motions $W^{k,\alpha}$ and the pressure $p$.} see also equation (1.6) in \cite{FlLu21}. We have
\begin{align*}
d u
+
\Pi [(u \cdot \nabla) u] \,dt
+
\sum_{k,\alpha} \Pi [(\sigma_{k,\alpha} \cdot \nabla) u ]  \,dW^{k,\alpha}
=
\nu \Delta u \,dt+S(u) \,dt,
\end{align*}
with
\begin{align} \label{eq:defin_strat}
S(u) =
\sum_{k,\alpha}
\Pi[(\sigma_{k,\alpha} \cdot \nabla) \Pi[(\sigma_{-k,\alpha} \cdot \nabla) u]].
\end{align}

On the time interval $(1-\tau_q,1-\tau_{q+1}]$, we can rewrite $S=S_q$  (cf. (2.6) and (2.7) and choose $C_\nu$ properly therein) as
\begin{align*}
S_q(u) &= \frac23 \kappa_q \Delta u - S_q^\perp (u),
\\
S_q^\perp(u) &:=
\sum_{k,\alpha}
(\theta_k^q)^2\,\Pi[(a_{k,\alpha}e_k \cdot \nabla) \Pi^\perp(a_{k,\alpha}e_{-k} \cdot \nabla) u],
\end{align*}
where $\Pi^{\perp} = - \nabla (-\Delta)^{-1} \dvg$ is the orthogonal complement of the Leray projector $\Pi$.

Decompose $u$ as $u = \sum_{\ell,\beta} u_{\ell,\beta} a_{\ell,\beta} e_\ell$. By \cite[Corollary 5.3]{FlLu21} it holds
\begin{align*}
S_q^\perp(u) &= -4\pi^2
\sum_{\ell,\beta} u_{\ell,\beta}|\ell|^2
\Pi \left[\sum_k
(\theta_k^q)^2 \sin^2(<_{k,\ell})
(a_{\ell,\beta} \cdot (k-\ell) ) \frac{k-\ell}{|k-\ell|^2}e_\ell
\right],
\end{align*}
where $<_{k,\ell}$ denotes the angle between the $\Z^3$ vectors  $k$ and $\ell$.

In order to better describe the behaviour of $S_q^\perp$, split $u$ into low and high Fourier modes
\begin{align*}
u = u^L + u^H,
\quad
u^L := \Pi_L u,
\end{align*}
where $\Pi_L$ denotes the Fourier projector onto modes ${|\ell|\leq N_q^{1-\delta}}$ for some small $\delta>0$. It holds
\begin{align*}
S_q^\perp(u^L)
&=
-4\pi^2
\sum_{\ell,\beta} u^L_{\ell,\beta}|\ell|^2
\Pi \left[\sum_k
(\theta_k^q)^2 \sin^2(<_{k,\ell})
(a_{\ell,\beta} \cdot k ) \frac{k}{|k|^2}e_\ell
\right]
\\
&\quad-4\pi^2
\sum_{\ell,\beta} u^L_{\ell,\beta}|\ell|^2
\Pi \left[\sum_k
(\theta_k^q)^2 \sin^2(<_{k,\ell})
\left((a_{\ell,\beta} \cdot (k-\ell) ) \frac{k-\ell}{|k-\ell|^2} -
(a_{\ell,\beta} \cdot k ) \frac{k}{|k|^2} \right)e_\ell
\right].
\end{align*}
Next we show that the second line on the right-hand-side in the expression above is negligible when compared to the first line, at least when taking the $H^{-1}$ norm thereof.
Recall also that $\Pi$ is a bounded operator from $H^{-1}$ to $H^{-1}$ and acts diagonally on the Fourier elements $e_\ell$.
By \cite[Lemma 5.5]{FlLu21} using $u^L_{\ell,\beta}=0$ for $|\ell| >N_q^{1-\delta}$ and $\theta_k^q = 0$ for $|k| < N_q$ it holds
\begin{align*}
&\left\|\sum_{\ell,\beta} u^L_{\ell,\beta}|\ell|^2
\Pi \left[\sum_k
(\theta_k^q)^2 \sin^2(<_{k,\ell})
\left((a_{\ell,\beta} \cdot (k-\ell) ) \frac{k-\ell}{|k-\ell|^2} -
(a_{\ell,\beta} \cdot k ) \frac{k}{|k|^2} \right)e_\ell
\right]
\right\|_{H^{-1}}
\\
&\quad \leq
\left(
\sum_{\ell,\beta} |u^L_{\ell,\beta}|^2|\ell|^2
\left( \sum_k
(\theta_k^q)^2
\left|(a_{\ell,\beta} \cdot (k-\ell) ) \frac{k-\ell}{|k-\ell|^2} -
(a_{\ell,\beta} \cdot k ) \frac{k}{|k|^2} \right| \right)^2
\right)^{1/2}
\\
&\quad \lesssim
\left(
\sum_{\ell,\beta} |u^L_{\ell,\beta}|^2|\ell|^2
\left(\sum_k
(\theta_k^q)^2
\frac{|\ell|}{N_q}\right)^2
\right)^{1/2}
\lesssim
\|u^L\|_{H^1}\kappa_q N_q^{-\delta}.
\end{align*}

On the other hand, one can check that the convergences in \cite[Proposition 5.4, Lemma 5.6]{FlLu21} hold true with $\gamma=0$ and are uniform in $|\ell| \leq N_q^{1-\delta}$, namely we have
\begin{align*}
&\left\|\sum_{\ell,\beta} u^L_{\ell,\beta}|\ell|^2
\Pi \left[\sum_k
(\theta_k^q)^2 \left( \sin^2(<_{k,\ell})
(a_{\ell,\beta} \cdot k ) \frac{k}{|k|^2}
-\frac{4}{15} a_{\ell,\beta}
\right)e_\ell
\right]
\right\|_{H^{-1}}
\lesssim
\|u^L\|_{H^1}\kappa_q N_q^{-1}.
\end{align*}

Therefore, since
\begin{align*}
-4\pi^2 \sum_{\ell,\beta} u^L_{\ell,\beta}|\ell|^2 \sum_k
(\theta_k^q)^2 a_{\ell,\beta} e_\ell
=
\kappa_q \Delta u^L,
\end{align*}
we have for some universal constant $C$
\begin{align} \label{eq:bound_strat}
\left\|\frac25 \kappa_q \Delta u^L - S_q(u^L)  \right\|_{H^{-1}}
=
\left\| S_q^\perp(u^L) - \frac{4}{15} \kappa_q\Delta u^L \right\|_{H^{-1}}
\leq
C\kappa_q N_q^{-\delta} \|u^L\|_{H^1}.
\end{align}
A similar computation shows the continuity of $S_q : \Pi_L H^{s+2} \to H^{s}$ for other values of $s \in \R$.

\begin{rmk}
In the passive scalar case, there is no need to reformulate the equation as in \eqref{eq:NS_transport_Leray} since no Leray projection is needed. As a consequence, the Stratonovich-to-It\=o corrector takes the simpler form
\begin{align*}
S_q(\rho)
=
\sum_{k,\alpha} (\theta_k^q)^2 a_{k,\alpha}e_k \cdot \nabla (a_{k,\alpha}e_{-k} \cdot \nabla \rho)
=
\frac{2}{3} \kappa_q \Delta \rho.
\end{align*}
\end{rmk}

\subsection{Dissipation} \label{ssec:diss}
In principle, we are interested in the case where the initial condition $u_0 \in L^2$ is deterministic. However, without any additional difficulty we can assume $u_0$ random and independent of the Brownian motions $\{W^{k,\alpha}\}$, and that $u_0 \in L^2$ holds almost surely.
Indeed, this setting is equivalent to requiring $\|u_0\|_{L^2} \leq M$ almost surely, for some deterministic constant $1 \leq M <\infty$.
To see this, introduce $\Omega_M := \{ M-1 < \|u_0\|_{L^2} \leq M\}$ for $M \in \N$, $M \geq 1$ and write $u_0 = \sum_M u_0 \mathbf{1}_{\Omega_M}$. Denoting $u^M$ a Leray-Hopf weak solution of \eqref{eq:NS_transport} with initial condition $u_0 \mathbf{1}_{\Omega_M}$, then $u:= \sum_M u^M$ solves \eqref{eq:NS_transport} with initial condition $u_0$. Viceversa, every Leray-Hopf weak solution to \eqref{eq:NS_transport} can be decomposed as above.

Let $q \in \N$ be given. A key step to the proof of total dissipation is the following uniform estimate on the $H^{-1}$ norm of $u^L$, the low-modes projection of $u$. This implies smallness of the $H^{-1}$ norm of $u$ since $u^H$ is high-modes by construction, and thus the standard estimate $\|u^H\|_{H^{-1}} \lesssim N_q^{\delta-1}\|u^H\|_{L^2} \leq N_q^{\delta-1}\|u\|_{L^2}$ holds true (recall that by definition of $H$ the process $u$ has zero space average for every $t$ almost surely).

\begin{lem} \label{lem:h-1}
For every $q \in \N$ there exists a choice of the parameters $\tau_q$, $N_q$ and $\kappa_q \sim N_q^3$ such that the following holds true.
Let $u$ be any Leray-Hopf weak solution of \eqref{eq:NS_transport} with initial condition $u_0$, $\|u_0\|_{L^2} \leq M$.
Then it holds for every $\beta>3/2$ and $\delta \in (0, 1 \wedge(\beta-3/2))$
\begin{align*}
\mathbb{E}\left[\sup_{t \in [1-\tau_q/2,1-\tau_{q+1}]}\|u^L\|_{H^{-1}}^2\right] \lesssim_{\beta,\delta}
\frac{M^{2\frac{\beta+1}{\beta}}}{(\nu+\kappa_q)^{\frac{1-\delta}{\beta}}}.
\end{align*}
\end{lem}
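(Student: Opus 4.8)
The plan is to run an Itô energy estimate for the low-mode negative Sobolev norm $\|u^L_t\|_{H^{-1}}^2$ and extract the enhanced dissipation hidden in the Stratonovich corrector $S_q$. Since $\nu\Delta$ and $S_q$ act diagonally in Fourier, they commute with $\Pi_L$, so $\Pi_L S(u)=S_q(u^L)$ on the interval $(1-\tau_q,1-\tau_{q+1}]$ and the low-mode equation in Itô form reads
\[
du^L=\left[\nu\Delta u^L+S_q(u^L)-\Pi_L\Pi((u\cdot\nabla)u)\right]dt-\sum_{k,\alpha}\Pi_L\Pi((\sigma_{k,\alpha}\cdot\nabla)u)\,dW^{k,\alpha}.
\]
Applying Itô's formula to $\|u^L\|_{H^{-1}}^2=\langle u^L,(-\Delta)^{-1}u^L\rangle$ produces the viscous term $-2\nu\|u^L\|_{L^2}^2$, the corrector drift $2\langle(-\Delta)^{-1}u^L,S_q(u^L)\rangle$, the nonlinear term, the quadratic-variation term of the transport noise, and a genuine martingale. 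Using \eqref{eq:bound_strat} to replace $S_q(u^L)$ by $\tfrac25\kappa_q\Delta u^L$ makes the corrector drift contribute the main dissipation $-\tfrac45\kappa_q\|u^L\|_{L^2}^2$, so together with the viscosity I expect a net dissipation $-2(\nu+\tfrac25\kappa_q)\|u^L\|_{L^2}^2$.

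The corrector error and the nonlinearity I would treat as lower order. For the error I would use that $S_q-\tfrac25\kappa_q\Delta$ is a Fourier multiplier supported on low modes, so its $H^{-1}$ pairing against $(-\Delta)^{-1}u^L$ carries the favorable weight $|\ell|^{-2}$; combined with \eqref{eq:bound_strat} and Bernstein's inequality on $\Pi_L$ this improves the naive bound to $\lesssim\kappa_q N_q^{-\delta}\|u^L\|_{L^2}^2$, which is absorbed into the main dissipation once $N_q$ is large. The nonlinearity $2\langle(-\Delta)^{-1}u^L,\Pi_L\Pi((u\cdot\nabla)u)\rangle$ I would integrate by parts and bound by $\|u\|_{L^2}^2\|\nabla(-\Delta)^{-1}u^L\|_{L^\infty}\lesssim M^2N_q^{(1-\delta)/2}\|u^L\|_{L^2}$, which by Young's inequality is dominated by $\tfrac14\kappa_q\|u^L\|_{L^2}^2$ plus a remainder $O(M^4\kappa_q^{-1}N_q^{1-\delta})$ that is negligible thanks to $\kappa_q\sim N_q^3$. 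As the authors emphasize, the nonlinearity is genuinely not the main difficulty here.

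The delicate point is the quadratic-variation contribution of the transport noise: because $\Pi_L\Pi((\sigma_{k,\alpha}\cdot\nabla)u)$ only sees the frequencies of $u$ near $\pm k$, i.e. the \emph{high} modes, this term is not cancelled by the corrector (which only sees low modes of $u$) and acts as a genuine source feeding high-frequency energy back into the low modes. I would control it uniformly in $\nu$ using only the energy bound $\|u_t\|_{L^2}\le M$ from \eqref{eq:energy_ineq}, together with the orthogonality structure of $\{a_{k,\alpha}\}$: the identity $a_{k,\alpha}\cdot(m-k)=a_{k,\alpha}\cdot m$ produces exactly the $|m|/N_q$ gain already exploited in \autoref{ssec:strato}, and summation over $k$ against $\sum_k(\theta_k^q)^2=\kappa_q$ keeps this source integrable with an $M^2$-type bound. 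The martingale is handled by the Burkholder--Davis--Gundy inequality when passing to $\mathbb{E}\sup_t$. Integrating from $1-\tau_q$ and taking expectations then yields an integrated enhanced-dissipation estimate of the shape $(\nu+\kappa_q)\,\mathbb{E}\!\int_{1-\tau_q}^{1-\tau_{q+1}}\|u^L_t\|_{L^2}^2\,dt\lesssim M^2$.

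Finally I would convert this integrated smallness into the pointwise supremum over $[1-\tau_q/2,1-\tau_{q+1}]$. The burn-in window $[1-\tau_q,1-\tau_q/2]$ of length $\tau_q/2$ forces a time at which $\|u^L\|_{L^2}^2$, hence $\|u^L\|_{H^{-1}}^2$, is $\lesssim M^2/\big(\tau_q(\nu+\kappa_q)\big)$; combining this with the uniform a priori bound $\|u^L\|_{L^2}\le M$ and the controlled time-modulus of $\|u^L\|_{H^{-1}}^2$ through an inverse interpolation inequality (with free exponent $\beta>3/2$, the constraint $\delta<\beta-\tfrac32$ guaranteeing convergence of the relevant Fourier sums on $\Pi_L$ in dimension $d=3$) yields $\mathbb{E}\sup\|u^L\|_{H^{-1}}^2\lesssim M^{2(\beta+1)/\beta}(\nu+\kappa_q)^{-(1-\delta)/\beta}$, after choosing $\tau_q$, $N_q$ and $\kappa_q\sim N_q^3$ appropriately. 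I expect the two hard points to be precisely the uniform-in-$\nu$ control of the high-to-low frequency feedback source and the interpolation step that produces the exact exponents; everything else is routine once the net dissipation $-(\nu+\kappa_q)\|u^L\|_{L^2}^2$ has been isolated.
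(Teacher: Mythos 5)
Your route is genuinely different from the paper's: you differentiate $\|u^L\|_{H^{-1}}^2$ by It\^o's formula and try to close an energy/Gronwall argument, whereas the paper never differentiates a norm. It writes the mild (Duhamel) formulation of $u^L$ with respect to the semigroup $P=e^{(\nu+2\kappa_q/5)\Delta}$, estimates each Duhamel term in the \emph{weaker} norm $H^{-\beta}$ (this is where $\beta>3/2$ and $\delta<\beta-3/2$ enter, through the embedding $L^1\subset H^{-\beta+\delta}$ applied to $u\otimes u$ in $d=3$), controls the stochastic term by the maximal inequality for stochastic convolutions of \cite{FlGaLu21+}, and only at the end interpolates $\|u^L\|_{H^{-1}}\leq\|u^L\|_{L^2}^{1-1/\beta}\|u^L\|_{H^{-\beta}}^{1/\beta}$ against $\|u^L\|_{L^2}\leq M$. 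Your drift analysis is essentially correct: the mode-by-mode treatment of the corrector error (the $|\ell|^{-2}$ weight cancelling the $|\ell|^2$ in \eqref{eq:bound_strat}) and the nonlinearity estimate via $\|\nabla(-\Delta)^{-1}u^L\|_{L^\infty}\lesssim N_q^{(1-\delta)/2}\|u^L\|_{L^2}$ plus Young are fine, and It\^o's formula is legitimate since $u^L$ involves finitely many Fourier modes.

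The gap sits exactly at the two points you yourself call hard. First, the It\^o (quadratic-variation) source is \emph{not} of ``$M^2$-type''. Using $a_{k,\alpha}\cdot m=a_{k,\alpha}\cdot(m+k)$ to cancel the $(-\Delta)^{-1/2}$ weight, each summand is $\leq |u_m|^2$, and summing over $k$ counts, for each high mode $m$ of $u$, the lattice points $k$ with $|m+k|\leq N_q^{1-\delta}$: there are $\sim N_q^{3(1-\delta)}=\kappa_q N_q^{-3\delta}$ of them. So the source is $\sim \kappa_q N_q^{-3\delta}M^2$ per unit time, and its raw time integral over $(1-\tau_q,1-\tau_{q+1}]$ is $\sim\tau_q\kappa_q N_q^{-3\delta}M^2$, which with the parameters the paper needs downstream ($\kappa_q\sim N_q^3$, $N_q=\tau_q^{-10}$, $\delta=4/5$) equals $\tau_q^{-5}M^2\to\infty$. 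Consequently your intermediate claim $(\nu+\kappa_q)\,\mathbb{E}\int\|u^L_t\|_{L^2}^2\,dt\lesssim M^2$ is false as stated; smallness only appears after balancing this source against the enhanced dissipation \emph{pointwise in time} (Gronwall), giving the level $N_q^{-3\delta}M^2$ at each fixed $t\geq 1-\tau_q/2$. Second, and as a consequence, the passage to $\mathbb{E}\sup_t$ cannot be done by ``integrate, drop the dissipation, apply BDG'': dropping the dissipation re-exposes the divergent integrated source above. You must retain the dissipation, i.e.\ pass to the variation-of-constants form $\|u^L_t\|_{H^{-1}}^2\leq e^{-c\kappa_q(t-t_0)}\|u^L_{t_0}\|_{H^{-1}}^2+\cdots+\int_{t_0}^te^{-c\kappa_q(t-s)}dM_s$, and the last term is a stochastic convolution, not a martingale in $t$, so BDG does not bound its supremum. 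The tool that does is precisely the maximal inequality for stochastic convolutions (\cite{DaPZ92}, or \cite{FlGaLu21+}) around which the paper's mild-formulation proof is organized; once you import it your scheme closes, but that is the missing idea, not a routine BDG application. A lesser mismatch: carried out, your method yields bounds like $M^2\bigl(N_q^{-3\delta}+(\tau_q\kappa_q)^{-1}\bigr)$, stronger than the stated one for the parameters used later, but it does not reproduce the claimed inequality for \emph{all} $\beta>3/2$ and $\delta\in(0,1\wedge(\beta-3/2))$; indeed nothing in your argument forces the constraint $\delta<\beta-3/2$ (in the paper it comes from $L^1\subset H^{-\beta+\delta}$), so your appeal to it is not anchored to any actual step.
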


\begin{proof}
Let us decompose $u=u^L + u^H$ and write down the mild formulation for $u^L$ with $t \in (1-\tau_q,1-\tau_{q+1}]$
\begin{align*}
u^L_t
&=
P(t-(1-\tau_q)) u^L_{1-\tau_q}
-
\int_{1-\tau_q}^t
P(t-s)
\Pi_L \Pi\, \dvg(u_s \otimes u_s) ds
\\
&\quad-
\sum_{k,\alpha}\int_{1-\tau_q}^t
P(t-s)
\Pi_L \Pi\, \dvg(u_s \otimes \sigma_{k,\alpha}) dW^{k,\alpha}_s
\\
&\quad+
\int_{1-\tau_q}^t
P(t-s)\left( S_q(u^L_s) -\frac25 \kappa_q \Delta u^L_s \right)
 ds.
\end{align*}
In the expression above we have conveniently denoted $P=e^{(\nu + 2\kappa_q/5)\Delta}$ the semigroup generated by the operator $(\nu + 2\kappa_q/5)\Delta$, and we have used that $\Pi_L S_q = S_q \Pi_L$ since $S_q$ acts diagonally on the Fourier elements $e_\ell$.

Since $u$ is a Leray-Hopf solution, the energy inequality guarantees $\|u^L_t\|_{L^2} \leq \|u_t\|_{L^2} \leq M$ almost surely for every $t$.
Thus using $\| P(t) \|_{H^{-1} \to H^{-1}} \lesssim \frac{1}{(\nu+2\kappa_q/5)t}$ we have
\begin{align*}
\mathbb{E}\left[ \sup_{t \in [1-\tau_q/2,1-\tau_{q+1}]}
\|P(t-(1-\tau_q)) u^L_{1-\tau_q}\|_{H^{-1}}^2
\right]
\lesssim
\frac{M^2}{(\nu+\kappa_q)^2\tau_q^2},
\end{align*}
and by boundedness of $\Pi_L\Pi$ and the embedding $L^1 \subset H^{-\beta+\delta}$
\begin{align*}
\mathbb{E}&\left[ \sup_{t \in [1-\tau_q/2,1-\tau_{q+1}]}
\int_{1-\tau_q}^t
\|P(t-s)
\Pi_L \Pi\,\dvg(u_s \otimes u_s) \|_{H^{-\beta}}^2 ds
\right]
\\
&\lesssim
\mathbb{E}\left[ \sup_{t \in [1-\tau_q/2,1-\tau_{q+1}]}
\int_{1-\tau_q}^t \frac{\| u_s \otimes u_s \|_{H^{-\beta+\delta}}^2}{(\nu+\kappa_q)^{1-\delta}(t-s)^{1-\delta}}
 ds
\right]
\\
&\lesssim
\mathbb{E}\left[ \sup_{t \in [1-\tau_q/2,1-\tau_{q+1}]}
\int_{1-\tau_q}^t \frac{\| u_s \otimes u_s \|_{L^1}^2}{(\nu+\kappa_q)^{1-\delta}(t-s)^{1-\delta}}
 ds
\right]
\lesssim
\frac{M^4}{(\nu+\kappa_q)^{1-\delta}}.
\end{align*}
In principle, the parameter $\delta$ in the equation above does not need to coincide with the parameter $\delta$ defining the low modes projector $\Pi_L$; however, we have chosen to use the same value for both  quantities, so to avoid the introduction of an additional parameter and keep notation lighter.

To control the stochastic integral, we want to apply \cite[Lemma 2.5]{FlGaLu21+}, in particular equation (2.3) therein. Notice that, with respect to that lemma, here we have in addition the projector $\Pi_L \Pi$; but it is easy to check that the very same proof applies also in this case. Therefore
\begin{align*}
\mathbb{E}&\left[
\sup_{t \in [1-\tau_q/2,1-\tau_{q+1}]} \left\|
\sum_{k,\alpha}\int_{1-\tau_q}^t
P(t-s)
\Pi_L\Pi\, \dvg(u_s \otimes \sigma_{k,\alpha}) dW^{k,\alpha}_s
\right\|_{H^{-\beta}}^2
\right]
\lesssim
\frac{M^2}{(\nu + \kappa_q)^{1-\delta}}.
\end{align*}

Finally, for the last term we have by \eqref{eq:bound_strat} and using that $P$ and $S_q$ commute
\begin{align*}
&\left\|\int_{1-\tau_q}^t
P(t-s)\left(S_{q}(u_{s}^L) -\frac25 \kappa_q \Delta u_{s}^L  \right)ds\right\|_{H^{-1}}
\lesssim\kappa_qN_q^{-\delta}\int_{1-\tau_q}^t
\|P(t-s)u^L_{s}\|_{H^1} ds
\\&\quad\lesssim\kappa_qN_q^{-\delta}\int_{1-\tau_q}^t
\|P(t-s)u^L_{s}\|_{H^{2-2\varepsilon}} ds
\lesssim\kappa_qN_q^{-\delta}\int_{1-\tau_q}^t
(\nu+\kappa_q)^{-1+\varepsilon}(t-s)^{-1+\varepsilon}\|u^L_{s}\|_{L^2} ds
\\&\quad\lesssim M\kappa_q^\varepsilon N_q^{-\delta},
\end{align*}
where $\varepsilon>0$ is small enough. Accordingly,
\begin{align*}
\mathbb{E}&\left[\sup_{t \in [1-\tau_q/2,1-\tau_{q+1}]}
\left\| \int_{1-\tau_q}^t
P(t-s)\left( S_q(u^L_s) -\frac25 \kappa_q \Delta u^L_s \right)
ds\right\|_{H^{-1}}^2
\right]\lesssim
M^{2}\kappa_{q}^{2\varepsilon}N_{q}^{-2\delta}.
\end{align*}
Putting all together and assuming for every $q \in \N$
\begin{align} \label{eq:condition_parameters}
\kappa_q^{2\varepsilon} N_q^{-2\delta}
\lesssim
\frac{1}{(\nu+\kappa_q)^{1-\delta}},
\qquad
(\nu+\kappa_q)^{1+\delta} \tau_q^2 \gtrsim 1,
\end{align}
by interpolation we arrive to
\begin{align*}
\mathbb{E}\left[\sup_{t \in [1-\tau_q/2,1-\tau_{q+1}]}\|u^L\|_{H^{-1}}^2\right]
&\leq
M^{2\frac{\beta-1}{\beta}} \,
\mathbb{E}\left[\sup_{t \in [1-\tau_q/2,1-\tau_{q+1}]}\|u^L\|_{H^{-\beta}}^{2}\right]^{1/\beta}
\lesssim
\frac{M^{2\frac{\beta+1}{\beta}}}{(\nu+\kappa_q)^{\frac{1-\delta}{\beta}}}
.
\end{align*}

\end{proof}

With this lemma at hand we are ready to prove our dissipation result in the case of transport noise.
\begin{proof}[Proof of \autoref{prop:dissipation_transport}]
Let \eqref{eq:condition_parameters} holds true.
Without loss of generality we can assume $\|u_0\|_{L^2} \leq M$ almost surely, for some deterministic constant $M \in (0,\infty)$.
For simplicity we denote
\begin{align*}
\tilde{c}_q^2 := C_{\beta,\delta}  \frac{M^{2/\beta}}{(\nu+\kappa_q)^{\frac{1-\delta}{\beta}}},
\end{align*}
where $C_{\beta,\delta}$ is the implicit constant in the previous lemma and depends only on $\beta$ and $\delta$.
The same lemma implies, by Markov inequality:
\begin{align*}
\|u_t\|_{H^{-1}}^2
&\leq
\|u_t^L\|_{H^{-1}}^2
+
\|u_t^H\|_{H^{-1}}^2
\leq
(\tilde{c}_q + N_q^{2(\delta-1)})M^2
=:
c_qM^2
\end{align*}
for every $t \in [1-\tau_q/2,1-\tau_{q+1}]$, with probability at least equal to $1-\tilde{c}_q$.

Recall the energy inequality \eqref{eq:energy_ineq} satisfied by Leray-Hopf weak solutions of \eqref{eq:NS_transport}: there exists a full Lebesgue measure set of times $\mathcal{T}=\mathcal{T}(\omega)$ such that for every $r \in \mathcal{T}$, $t>r$ it holds
\begin{align*}
\|u_t\|_{L^2}^2
+
2\nu\int_r^t \|u_s\|_{H^1}^2 ds
\leq
\|u_r\|_{L^2}^2.
\end{align*}

Formally, in order to prove smallness of $\|u_{1-\tau_{q+1}}\|_{L^2}$ we would like to apply the following inequality with $c_q \ll 1$:
\begin{align} \label{eq:d/dt_energy}
-\frac{d}{dt} \left(\frac{1}{\|u_t\|_{L^2}^2} \right)
=
\frac{1}{\|u_t\|_{L^2}^4 }\frac{d}{dt} \|u_t\|_{L^2}^2
&\leq
-2\nu \frac{\|u_t\|_{H^1}^2}{\|u_t\|_{L^2}^4 }
\leq
-\frac{2\nu }{\|u_t\|_{H^{-1}}^2}
\leq
-\frac{2 \nu}{c_q M^2},
\end{align}
where the second to last inequality comes from interpolation $\|u_t\|_{L^2}^4 \leq \|u_t\|_{H^1}^2 \|u_t\|_{H^{-1}}^2$.
However, the kinetic energy $t \mapsto \|u_t\|_{L^2}^2$ may have jump discontinuities and even vanish at some time $t \in [0,1)$.
In order to rigorously make sense of the previous line, let us consider instead the function
\begin{align*}
E(t) :=
\begin{cases}
\|u_t\|_{L^2}^2, &\mbox{ if } t \in \mathcal{T},
\\
\displaystyle
\liminf_{\substack{s \in \mathcal{T},\, s \to t}}
\|u_s\|_{L^2}^2, &\mbox{ if } t \in \mathcal{T}^c.
\end{cases}
\end{align*}
The function $E$ is non-increasing by definition of $\mathcal{T}$, therefore of class $BV$ almost surely.
In particular, $\liminf_{s \in \mathcal{T}, s \to t}\|u_s\|_{L^2}^2$ always equals the right limit $\lim_{s \in \mathcal{T}, s \downarrow t}\|u_s\|_{L^2}^2$ and $E$ is right continuous.
Moreover, also the map $t \mapsto \|u_t\|_{L^2}^2$ is of class $BV$ almost surely since it coincides with $E$ on the full Lebesgue measure set $\mathcal{T}$.
In particular, it holds as Radon measures
$$
d\|u\|_{L^2}^2((s,t]) = dE ((s,t]) = E(t)-E(s).
$$

Finally, $E(t) \geq \|u_t\|_{L^2}^2$ for every $t \in [0,1)$ since
\begin{align*}
\liminf_{\substack{s \in \mathcal{T},\, s \to t}}
\|u_s\|_{L^2}^2
\geq
\liminf_{\substack{s \to t}}
\|u_s\|_{L^2}^2
\geq
\|u_t\|_{L^2}^2
\end{align*}
by lower semicontinuity of the $L^2$ norm (recall that we require Leray-Hopf weak solutions to be of class $C_w([0,1),H)$ almost surely).

Let us consider the Lebesgue decomposition of the measure $dE$:
\begin{align*}
dE(t) = e_{Leb}(t) dt + d E_{Can}(t) + \sum_{t \in \mathcal{D}} (E(t^+)-E(t^-)) \delta_t,
\end{align*}
where $e_{Leb}(t)$ is the density with respect to the Lebesgue measure of the absolutely continuous part of $dE$, $d E_{Can}$ in the Cantor part of $dE$, $\mathcal{D}$ is the (at most countable) discontinuity set of $E$ and $\delta_t$ denotes the delta Dirac measure at time $t$.

By energy inequality \eqref{eq:energy_ineq} we have for almost every $t \in [0,1)$
\begin{align*}
e_{Leb}(t) \leq -2\nu \|u_t\|_{H^1}^2,
\end{align*}
and the Cantor and atomic parts of $dE$ are non-positive measures:
\begin{align*}
d E_{Can}(t) \leq 0,
\quad
\sum_{t \in \mathcal{D}} (E(t^+)-E(t^-)) \delta_t \leq 0.
\end{align*}

Recall that we want to show that $\|u_{1-\tau_{q+1}}\|_{L^2}^2$ is small. Assume $E(1-\tau_{q+1})>0$ (otherwise there is nothing to prove); by Vol'pert formula (i.e. the chain rule for $BV$ functions, see for instance \cite{AmFuPa00}) we have with probability no less than $1-\tilde{c}_q$
\begin{align*}
-
\frac{1}{E(1-\tau_{q+1})}
&\leq
\frac{1}{E(1-\tau_q/2)}
-
\frac{1}{E(1-\tau_{q+1})}
\\
&=
\int_{1-\tau_q/2}^{1-\tau_{q+1}}
\frac{e_{Leb}(t) dt}{E(t)^2}
+
\int_{1-\tau_q/2}^{1-\tau_{q+1}}
\frac{dE_{Can}(t)}{E(t)^2}
\\
&\quad+
\sum_{t \in \mathcal{D}}
\mathbf{1}_{\{1-\tau_q/2 <t\leq 1-\tau_{q+1}\}}
\left(-\frac{1}{E(t^+)} + \frac{1}{E(t^-)}
\right)
\\
&\leq
-2\nu \int_{1-\tau_q/2}^{1-\tau_{q+1}}
\frac{\|u_t\|_{H^1}^2 dt}{\|u_t\|_{L^2}^4 }
\leq
-2\nu \int_{1-\tau_q/2}^{1-\tau_{q+1}}
\frac{dt}{\|u_t\|_{H^{-1}}^2 }
\leq
-\frac{\nu (\tau_q-2\tau_{q+1})}{c_q M^2}.
\end{align*}

In the last line we have used $\|u_t\|_{L^2}^2 = E(t)$ for every $t \in \mathcal{T}$ and $\|u_t\|_{H^{-1}}^2 \leq c_q M^2$ for every $t \in [1-\tau_q/2,1-\tau_{q+1}]$ with probability at least $1-\tilde{c}_q$.

Therefore, as long as $q \in \N$ is such that $E(1-\tau_{q+1})>0$ and assuming $\tau_q-2\tau_{q+1} \geq \tau_q/2$, the previous formula implies
\begin{align*}
\mathbb{P} (A_q)
\geq 1-\tilde{c}_q,
\quad
A_q := \left\{
E(1-\tau_{q+1})
\leq
M^2 \frac{2 c_q}{\nu \tau_q}
\right\}.
\end{align*}

Let us now fix parameters $\beta=12/5$, $\delta=4/5$ and
\begin{align*}
\tau_q := 4^{-q},
\quad
N_q := \tau_q^{-10}.
\end{align*}
Since $\sum_{q}\tilde{c}_q<\infty$ by our choice of parameters, by Borel-Cantelli Lemma almost every $\omega \in \Omega$ belongs to the set $A_q$ for every $q$ larger than a certain $q_\star=q_\star(\omega)$, and thus
\begin{align*}
\sup_{t \geq 1-\tau_{q+1}} \|u_t\|_{L^2}^2
\leq
E(1-\tau_{q+1}) \to 0
\end{align*}
almost surely as $q\to \infty$, since for every fixed value of $M$ and $\nu$
\begin{align*}
\lim_{q \to \infty}
M^2 \frac{ c_q}{\nu \tau_q}
=
0.
\end{align*}
This obviously implies $\lim_{t\uparrow 1} \|u_t\|_{L^2}=0$ almost surely, and the proof is complete.
\end{proof}

\begin{rmk} \label{rmk:rate}
As a consequence, Leray-Hopf weak solutions of \eqref{eq:NS_transport} can be extended with $u_1=0$ to continuous functions at time $t=1$ with respect to the strong topology on $H$.

Actually, from the proof of the previous proposition one can deduce the following refinement. Recall that the largest integer $q$ such that $E(1-\tau_{q+1})>M^2 \frac{c_q}{\nu \tau_q}$ is almost surely finite by Borel-Cantelli Lemma.
In particular, we have the following almost sure ``rate'' of dissipation:
\begin{align*}
\limsup_{q \to \infty} \left(\frac{\nu \tau_q}{M^2 c_q}\right)^{1-\delta}E(1-\tau_{q+1})
\leq
\lim_{q \to \infty} \left(\frac{M^2 c_q}{\nu \tau_q}\right)^{\delta} =0,
\end{align*}
where $\delta \in (0,1)$, implying
\begin{align} \label{eq:rate}
\limsup_{q \to \infty} \left(\frac{\nu \tau_q}{M^2 c_q}\right)^{1-\delta} \sup_{t \geq 1-\tau_{q+1}}\|u_{t}\|_{L^2}^2
=
0.
\end{align}
\end{rmk}

\subsection{Non dissipating solutions} \label{ssec:non-diss}

The key property that allowed us to prove total dissipation at time $t=1$ in \autoref{prop:dissipation_transport} was the energy inequality \eqref{eq:energy_ineq} satisfied by Leray-Hopf weak solutions of \eqref{eq:NS_transport}.
More than that, \eqref{eq:rate} shows that in this case some sort of \emph{enhanced dissipation} holds even before time $t=1$, although for $t<1$ it is neither total nor anomalous.
In this subsection we show that weak solutions not satisfying the energy inequality may not dissipate energy close to time $t=1$.

\begin{prop} \label{prop:conv_int}
Let $d=3$.
For every $\nu>0$ and zero-mean, divergence-free $u_0 \in L^2$ almost surely there exists a progressively measurable weak solution $u$ to \eqref{eq:NS_transport} on the time interval $[0,1)$, with continuous trajectories in $H^{-1}$ and initial condition $u|_{t=0}=u_0$, such that almost surely
\begin{align} \label{eq:non_diss}
\int_{r}^1 \|u_t\|_{L^2}^2 dt = \infty,
\quad
\forall r \in (0,1).
\end{align}
In particular, \eqref{eq:rate} can not hold true for the constructed solution.
\end{prop}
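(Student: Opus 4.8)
The plan is to construct the desired non-dissipating solution by a pathwise convex integration scheme, in the spirit of \cite{Pa23+}, engineered to produce a prescribed energy profile that blows up as $t \uparrow 1$ at a non-integrable rate. The guiding principle is that, since we do not require $u$ to satisfy the energy inequality \eqref{eq:energy_ineq}, we retain enough freedom to prescribe the time evolution of $\|u_t\|_{L^2}^2$; choosing this evolution to be comparable to $(1-t)^{-1}$ immediately yields \eqref{eq:non_diss}, because $\int_r^1 (1-t)^{-1}\,dt = \infty$ for every $r \in (0,1)$. The failure of \eqref{eq:rate} is then automatic, since \eqref{eq:rate} would force $\sup_{t \geq 1-\tau_{q+1}}\|u_t\|_{L^2}^2 \to 0$, hence an $L^2$ norm bounded and integrable near $t=1$, in contradiction with \eqref{eq:non_diss}.

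First I would remove the stochastic transport term by passing to the stochastic flow. Let $X_t$ denote the Stratonovich stochastic flow of diffeomorphisms generated by $dX_t = \sum_{k,\alpha}\sigma_{k,\alpha}(X_t,t)\circ dW^{k,\alpha}_t$, $X_0 = \mathrm{id}$. For every fixed $t<1$ only finitely many scales $N_q$ are switched on, so $X_t$ is a smooth, volume-preserving (as $\dvg \sigma_{k,\alpha}=0$) diffeomorphism, adapted to $\{\mathcal{F}_t\}$. Setting $v_t$ to be the pull-back of $u_t$ under $X_t$ — composition with the flow, conjugated by $\nabla X_t$ so as to retain the divergence-free constraint — and using the Stratonovich chain rule, equation \eqref{eq:NS_transport} transforms into a random Navier-Stokes-type system
\[
\partial_t v + \dvg(v \otimes v) + \nabla p = \nu \mathcal{L}_\omega(t) v + \dvg \mathring{R}, \qquad \dvg v = 0,
\]
where $\mathcal{L}_\omega(t)$ is a random, smooth-in-space, uniformly elliptic second-order operator (the pull-back of $\nu\Delta$ under $X_t$) and $\mathring{R}$ is a Reynolds stress to be driven to zero. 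Crucially, all $\omega$-dependence now enters through the adapted smooth flow $X_t$, so the construction can be carried out pathwise.

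Next I would run the convex integration iteration pathwise on $[0,1)$, following \cite{Pa23+}, building a sequence $(v_n, \mathring{R}_n)$ with $\mathring{R}_n \to 0$ and $v_n \to v$ in $C_{loc}([0,1);H^{-1}) \cap C_{loc}([0,1);L^2)$, while steering the energy so that $\|v_t\|_{L^2}^2 \sim (1-t)^{-1}$. The perturbations at each step are built from intermittent/Beltrami building blocks modulated by amplitude functions chosen adapted, so that $v_n$, and hence $v$, remain progressively measurable; measurable selection at each stage preserves adaptedness exactly as in the transport-noise convex integration literature. The prescribed datum $u_0$ is matched by freezing the construction near $t=0$ (agreeing there with the flow-transport of $u_0$) and activating the oscillatory corrections only for $t$ bounded away from $0$. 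Since the target energy is unbounded and $[0,1)$ is not compact, I would realize the scheme through a gluing/diagonal procedure over the exhausting intervals $[0,1-\tau_n]$, matching solutions across endpoints while letting the amplitude grow, so that the limit is a genuine weak solution on all of $[0,1)$ with the non-integrable energy profile.

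Finally I would transform back, obtaining a progressively measurable weak solution of \eqref{eq:NS_transport} with continuous $H^{-1}$ trajectories on $[0,1)$ and initial condition $u_0$. Because $X_t$ is volume-preserving and, on each compact $[0,1-\tau]$, the field $\nabla X_t$ is pathwise bounded with bounded inverse, the norms $\|u_t\|_{L^2}$ and $\|v_t\|_{L^2}$ are comparable up to locally bounded random constants, so $\int_r^1 \|v_t\|_{L^2}^2\, dt = \infty$ transfers to $\int_r^1 \|u_t\|_{L^2}^2\, dt = \infty$, giving \eqref{eq:non_diss}. The main obstacle, in my view, is the interplay of the two degenerations as $t \uparrow 1$: the prescribed energy blows up while the noise intensity $\kappa_q \to \infty$ pushes the flow $X_t$ (and the operator $\mathcal{L}_\omega$) toward ever higher frequencies. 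One must ensure the gluing across $[0,1-\tau_n]$ keeps the distortion constants of $\nabla X_t$ from swallowing the energy blow-up, and that adaptedness survives the countably-many-stage limit; controlling the convex integration estimates uniformly against this simultaneous frequency growth is the delicate point, whereas verifying that \eqref{eq:rate} cannot hold is elementary.
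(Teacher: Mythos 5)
Your overall strategy (non-Leray--Hopf solutions built by convex integration in the spirit of \cite{Pa23+}, with the kinetic energy pumped up near $t=1$) points in the right direction, but the first step of your plan contains a genuine error. For the pure transport noise in \eqref{eq:NS_transport} there is no change of variables along the stochastic flow that simultaneously cancels the noise and preserves the standard incompressible Navier--Stokes structure. If you conjugate by $\nabla X_t$, i.e.\ take the vector-field pull-back $v_t=(\nabla X_t)^{-1}(u_t\circ X_t)$, then $v_t$ is indeed divergence-free (as $X_t$ is volume preserving), but the Kunita--It\=o--Wentzell formula cancels the noise only when the noise is the full Lie derivative $(\sigma_{k,\alpha}\cdot\nabla)u-(u\cdot\nabla)\sigma_{k,\alpha}$; with pure transport noise a stretching-type term $(u\cdot\nabla)\sigma_{k,\alpha}\circ dW^{k,\alpha}$ survives the transformation, so the transformed equation is still stochastic. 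If instead you take plain composition $v_t=u_t\circ X_t$, the noise does cancel, but then both the constraint and the nonlinearity are twisted: $\dvg u=0$ becomes $\mathrm{tr}\left[\nabla v\,(\nabla X_t)^{-1}\right]=0$, and $(u\cdot\nabla)u$ becomes $\bigl(v\cdot(\nabla X_t)^{-T}\nabla\bigr)v$ (moreover the Leray projector hidden in the pressure does not commute with the composition). In particular the system you write, $\partial_t v+\dvg(v\otimes v)+\nabla p=\nu\mathcal{L}_\omega(t)v+\dvg\mathring{R}$ with $\dvg v=0$, is not what any correct transformation produces. This is not cosmetic: the convex integration building blocks and the inverse divergence operator in \cite{Pa23+} rely on the constant-coefficient quadratic structure and on the exact divergence-free constraint, so that scheme cannot be cited as a black box for the twisted system; convex integration with random variable coefficients in both the constraint and the nonlinearity would be a substantial new piece of work.

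There is a second gap, which you flag but do not resolve: prescribing a pathwise energy profile $\|u_t\|_{L^2}^2\sim(1-t)^{-1}$ up to $t=1$ requires estimates that are uniform against the blowing-up noise intensities $\kappa_q\sim N_q^3$, and exact pathwise prescription is in any case too strong for a stochastic scheme in which the random objects are only controlled up to stopping times. The paper sidesteps both issues. It keeps the stochastic equation (no transformation), takes the stochastic Stokes solution $Z$ with datum $u_0$ as reference, and on each interval $[1-\tau_q,1-\tau_{q+1}]$ --- where the noise has one fixed frequency and intensity, so every constant in the scheme may depend on $q$ and $\nu$ --- it runs the scheme of \cite{Pa23+} with two-sided temporal cutoffs, so that the perturbation vanishes at both endpoints and $u^q$ equals $Z$ there, and obtains $\int_{1-\tau_q}^{1-\tau_{q+1}}\|u^q_t\|_{L^2}^2\,dt\geq K$ only with probability $\geq 1-(q+1)^{-2}$ (the loss coming from a stopping time $\mathfrak{t}$). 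Gluing the $u^q$ across intervals then yields a global progressively measurable weak solution with continuous $H^{-1}$ trajectories and datum $u_0$, and Borel--Cantelli upgrades the high-probability interval lower bounds to the almost sure statement \eqref{eq:non_diss}; no uniform-in-$q$ estimate and no pathwise energy prescription are ever needed. If you abandon the flow transformation in favor of working with the stochastic Navier--Stokes--Reynolds system as the paper does, and replace the prescribed profile by interval-wise lower bounds plus Borel--Cantelli, your outline essentially becomes the paper's proof; as written, however, it does not go through.
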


The proof is based on a modification of the convex integration scheme of \cite{Pa23+}. Let $Z$ be the unique weak solution on $[0,1)$ of the Stokes system
\begin{align*}
\begin{cases}
d Z  + \sum_{k,\alpha} (\sigma_{k,\alpha} \cdot \nabla) Z \circ dW^{k,\alpha} + \nabla p_Z dt = \nu\Delta Z dt,
\\
\dvg Z = 0,
\\
Z|_{t=0} = u_0.
\end{cases}
\end{align*}
It is sufficient to prove the following:
\begin{lem}
For every $K>0$ and $q \in \N$ there exists a progressively measurable weak solution $u^q$ to \eqref{eq:NS_transport} on the time interval $[1-\tau_q,1-\tau_{q+1}]$ with continuous trajectory in $H^{-1}$ and such that $u^q_{1-\tau_q} = Z_{1-\tau_q}$ and $u^q_{1-\tau_{q+1}} = Z_{1-\tau_{q+1}}$ almost surely, and with probability at least $1-(q+1)^{-2}$ it holds
\begin{align*}
\int_{1-\tau_q}^{1-\tau_{q+1}} \|u^q_t\|_{L^2}^2 dt \geq K.
\end{align*}
\end{lem}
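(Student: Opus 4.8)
The plan is to adapt the stochastic convex integration scheme of \cite{Pa23+} for \eqref{eq:NS_transport} to the single interval $I_q := [1-\tau_q,1-\tau_{q+1}]$, which is bounded away from $t=1$ so that the intensities $\theta_k^q$ are constant in time there. I would start from the Da Prato--Debussche splitting $u^q = Z + v$: subtracting the Stokes system defining $Z$ from \eqref{eq:NS_transport} shows that $v$ solves
\begin{align*}
dv
+
\Pi\,\dvg\big((Z+v)\otimes(Z+v)\big)\,dt
+
\sum_{k,\alpha}\Pi[(\sigma_{k,\alpha}\cdot\nabla)v]\circ dW^{k,\alpha}
=
\nu\Delta v\,dt,
\qquad
\dvg v = 0.
\end{align*}
The transport noise is retained in the equation for $v$, and it is precisely this term that the scheme of \cite{Pa23+} is designed to absorb, by advecting the oscillatory building blocks along the stochastic flow generated by the $\sigma_{k,\alpha}$ so that the advective noise acts trivially on them and the Reynolds-stress cancellation becomes a pathwise, essentially deterministic computation in the moving frame.

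The iteration then produces adapted pairs $(v_n,R_n)$ solving the relaxed system with Reynolds stress $R_n$, with $\|R_n\|\to 0$ and $v_n\to v$ in $C(I_q,H^{-1})$, so that $u^q:=Z+v\in C(I_q,H^{-1})$. Progressive measurability propagates along the iteration because at each step the perturbation $w_{n+1}$ is a pointwise-in-time algebraic function of the adapted data $(v_n,R_n)$, while the new stress $R_{n+1}$ is recovered from $w_{n+1}$ and $Z$ through the purely spatial inverse-divergence operator; no information from the future of $W$ enters. Imposing temporal cutoffs that force every $v_n$ to vanish near the two endpoints of $I_q$ gives $v_{1-\tau_q}=v_{1-\tau_{q+1}}=0$, hence $u^q_{1-\tau_q}=Z_{1-\tau_q}$ and $u^q_{1-\tau_{q+1}}=Z_{1-\tau_{q+1}}$, as required.

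To force large energy I would steer the scheme toward a deterministic target profile $\tilde e(t)\geq 0$ for $\|v_t\|_{L^2}^2$, vanishing at the endpoints of $I_q$ and large in the interior. Let $\Omega_q := \{\sup_{t\in I_q}\|Z_t\|_{H^s}\leq M_q\}$, where $H^s$ is the norm controlling the convex integration estimates; since these norms are almost surely finite on $I_q$ (parabolic smoothing, $I_q$ bounded away from $1$), moment bounds for the linear problem and Markov's inequality give $\PP(\Omega_q)\geq 1-(q+1)^{-2}$ for $M_q$ chosen large. Calibrating the scheme to $M_q$ realizes $\|v_t\|_{L^2}^2=\tilde e(t)$ within a small tolerance on $\Omega_q$, and progressive measurability is retained by freezing the construction after the stopping time $\inf\{t\in I_q:\|Z_t\|_{H^s}>M_q\}$, which coincides with the right endpoint on $\Omega_q$. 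Because the Stratonovich transport preserves the $L^2$ energy of the linear flow we have $\|Z_t\|_{L^2}\leq\|u_0\|_{L^2}$ pathwise, so from $u^q=Z+v$ and $\|Z+v\|_{L^2}^2\geq\tfrac12\|v\|_{L^2}^2-\|Z\|_{L^2}^2$,
\begin{align*}
\int_{I_q}\|u^q_t\|_{L^2}^2\,dt
\geq
\tfrac12\int_{I_q}\tilde e(t)\,dt
-
|I_q|\,\|u_0\|_{L^2}^2
\qquad\text{on }\Omega_q,
\end{align*}
up to the tolerance above. Choosing $\int_{I_q}\tilde e\,dt\geq 2\big(K+|I_q|\,\|u_0\|_{L^2}^2\big)+1$ then yields $\int_{I_q}\|u^q_t\|_{L^2}^2\,dt\geq K$ on $\Omega_q$.

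The step I expect to be the main obstacle is the faithful transfer of the scheme of \cite{Pa23+}: checking that the transport noise can be rendered inert on the building blocks while both the Reynolds-stress cancellation and the $C_t H^{-1}$ convergence survive, and --- most delicately --- that the whole iteration stays progressively measurable even though classical convex integration constructs the entire trajectory on $I_q$ at once. By contrast, the endpoint matching via temporal cutoffs, the energy bookkeeping, and the probabilistic control of the $Z$-dependent errors are comparatively routine once the scheme is available.
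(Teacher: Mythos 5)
Your proposal follows essentially the same route as the paper's proof: both decompose $u^q = Z + v$ with $Z$ the Stokes solution, run the convex integration scheme of \cite{Pa23+} on $[1-\tau_q,1-\tau_{q+1}]$ with two-sided temporal cutoffs to match the endpoint values while keeping adaptedness (the endpoint times being deterministic), and obtain the energy lower bound on a high-probability event controlling the random input pathwise (the paper via the stopping time $\mathfrak{t}$ of the scheme with $\PP\{\mathfrak{t}>1-\tau_q/2\}\geq 1-(q+1)^{-2}$, you via an equivalent event on $Z$). The only cosmetic difference is the energy bookkeeping: the paper pumps a prescribed increment $3\gamma_{n+1}$ per iteration and chooses $\gamma_{n_0}\gtrsim \tau_q^{-1}(K+M^2)$, whereas you prescribe a target profile $\tilde e(t)$ for $\|v_t\|_{L^2}^2$ --- two interchangeable implementations of the same mechanism.
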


\begin{proof}
Without loss of generality we can assume $\|u_0\|_{L^2}\leq M$ almost surely.
The idea is to construct a solution $u^q$ as the sum of the solution $Z$ to the Stokes system and a perturbation $v$, under the additional constraint $v_{1-\tau_q}=v_{1-\tau_{q+1}}=0$.
In order to do so, we modify the convex integration scheme in \cite[Proposition 4.2]{Pa23+} using two sided cutoffs $\chi$ such that
\begin{align*}
\chi(t)
=
\begin{cases}
0, \quad \mbox{ if }
-\infty < t \leq 1-\tau_q(1 - 2^{-n-1}),
\\
1, \quad \mbox{ if }
1-\tau_q(1 - 2^{-n}) \leq t \leq 1-\tau_{q+1}(1+2^{-n}),
\\
0,\quad \mbox{ if }
1-\tau_{q+1}(1+2^{-n-1}) \leq t < \infty,
\end{cases}
\end{align*}
and monotone in between of these intervals.
Notice that imposing the terminal value $v_{1-\tau_{q+1}}=0$ does not compromise adaptedness, since the time $1-\tau_{q+1}$ is deterministic.
Then the estimates in \cite[Section 4.2]{Pa23+} remain the same, with the only differences that the the iterative estimates can now depend on $\nu>0$ and time derivative of $\chi$ is now controlled with $|\chi'| \lesssim \tau_{q+1}^{-1} 2^n$ and therefore may depend on $q$; however this gives no additional problem since here $\nu,q$ are fixed (notice however that the constants $C_v,C_R,...$ in \cite[Proposition 4.2]{Pa23+} may depend on $\nu$ and $\tau_{q+1}$).
The lower bound on the kinetic energy of $u^q$ comes from (here $u^q_n$ denotes the solution of the Navier-Stokes-Reynolds system obtained as $n$-th iteration of the convex integration scheme of \cite[Proposition 4.2]{Pa23+})
\begin{align*}
\int_{1-\tau_q(1 - 2^{-n+2}) \wedge \mathfrak{t}}^\mathfrak{t}
| \| u^q_{n+1}(t)-Z(t)\|_{L^2}^2 -\| u^q_n (t)-Z(t)\|_{L^2}^2 - 3 \gamma_{n+1} |\, dt
&\leq
C_e \delta_{n+1},
\end{align*}
where $\delta_n$ is a given parameter going to zero sufficiently fast as $n \to \infty$, $\mathfrak{t} \leq 1-\tau_{q+1}$ is a suitable stopping time with $\PP\{\mathfrak{t}>1-\tau_{q}/2\} \geq 1-(q+1)^{-2}$, and $C_e$ may depend on $\nu$ and $\tau_{q+1}$.
Then, choosing $n_0$ sufficiently large (possibly depending on $\nu$ and $q$) and $\gamma_{n_0} \geq C \tau_{q}^{-1} (K+M^2)$, we get with probability no less than $1-(q+1)^{-2}$:
\begin{align*}
2\int_{1-\tau_q}^{1-\tau_{q+1}} \|u^q_t\|_{L^2}^2 dt
&\geq
\int_{1-\tau_q(1 - 2^{-n+2}) \wedge \mathfrak{t}}^\mathfrak{t} \|u^q_t-Z(t)\|_{L^2}^2 dt - 2M^2
\\
&\geq
\int_{1-\tau_q(1 - 2^{-n+2}) \wedge \mathfrak{t}}^\mathfrak{t} \|u^q_{n_0}(t)-Z(t)\|_{L^2}^2 dt - C_e \sum_{n=n_0}^\infty \delta_{n+1}- 3 \sum_{n = n_0}^\infty \gamma_{n+1} -2M^2
\\
&\geq
\int_{1-\tau_q(1 - 2^{-n+2}) \wedge \mathfrak{t}}^\mathfrak{t} 3 \gamma_{n_0} dt - C_e \sum_{n} \delta_{n+1} - 3 \sum_{n \neq n_0} \gamma_n  - 2M^2
\geq
K.
\end{align*}
\end{proof}

\begin{proof}[Proof of \autoref{prop:conv_int}]
It follows from the previous lemma by gluing solutions $u^q$ on different time intervals and Borel-Cantelli Lemma. Gluing is possible because $Z_{1-\tau_{q+1}} \in L^2$ almost surely, and the glued process solves \eqref{eq:NS_transport} and enjoys continuity in $H^{-1}$.
Indeed for every divergence-free test function $f \in C^\infty_c(\T^3 \times [0,1), \R^3)$ and $0<s < 1-\tau_{q} < ...< 1-\tau_{q+k} < t<1$, since $u$ is a solution on every time interval $[1-\tau_{q+i-1},1-\tau_{q+i}]$ (the endpoints $s$ and $t$ being similar) it holds
\begin{align*}
\langle u_{1-\tau_{q+i}}^- , f_{1-\tau_{q+i}} \rangle - \langle u_{1-\tau_{q+i-1}}^+ , f_{1-\tau_{q+i-1}} \rangle
&=
\int_{1-\tau_{q+i-1}}^{1-\tau_{q+i}}  \langle u_r , \partial_t f_r + (u_r \cdot \nabla) f_r + \nu \Delta f_r \rangle dr
\\
&\quad+
\sum_{k,\alpha}
\int_{1-\tau_{q+i-1}}^{1-\tau_{q+i}}  \langle u_r ,  (\sigma_{k,\alpha} \cdot \nabla) f_r \rangle \circ dW^{k,\alpha},
\end{align*}
where $u_{1-\tau_{q+i}}^-$ denotes the left limit of $u$ at time $1-\tau_{q+i}$ and $u_{1-\tau_{q+i-1}}^+$ denotes the right limit of $u$ at time $1-\tau_{q+i-1}$.
By continuity in $H^{-1}$
\begin{align*}
\langle u_{1-\tau_{q+i}}^- , f_{1-\tau_{q+i}} \rangle - \langle u_{1-\tau_{q+i-1}}^+ , f_{1-\tau_{q+i-1}} \rangle
=
\langle u_{1-\tau_{q+i}} , f_{1-\tau_{q+i}} \rangle - \langle u_{1-\tau_{q+i-1}} , f_{1-\tau_{q+i-1}} \rangle,
\end{align*}
and therefore
\begin{align*}
\langle u_t , f_t \rangle - \langle u_s , f_s \rangle
&=
\langle u_t , f_t \rangle - \langle u_{1-\tau_{q+k}} , f_{1-\tau_{q+k}} \rangle
+
...
+
\langle u_{1-\tau_{q}} , f_{1-\tau_{q}} \rangle - \langle u_s , f_s \rangle
\\
&=
\int_s^t  \langle u_r , \partial_t f_r + (u_r \cdot \nabla) f_r + \nu \Delta f_r \rangle dr
+
\sum_{k,\alpha}
\int_s^t  \langle u_r ,  (\sigma_{k,\alpha} \cdot \nabla) f_r \rangle \circ dW^{k,\alpha}.
\end{align*}
\end{proof}

\section{Total dissipation by solution of randomly forced Navier-Stokes equations} \label{sec:smooth}
In this section we consider a relatively more regular (in time) approximation of transport noise.
Let the coefficients $\{\sigma_{k,\alpha}\}_{k,\alpha}$ be as in the previous section, and consider the Navier-Stokes equations with large friction and additive noise \eqref{eq:veps3}:
\begin{align*}
\begin{cases}
d v
+
(v \cdot \nabla) v \,dt
+
\nabla p_v \,dt
=
\Delta v \,dt
-
\varepsilon^{-1} v \, dt
+
\varepsilon^{-1}
\sum_{k,\alpha} \sigma_{k,\alpha} dW^{k,\alpha},
\\
\dvg v = 0,
\end{cases}
\end{align*}
where $\varepsilon=\varepsilon(t)$ depends on $t$ and is constantly equal to $\varepsilon_q \in (0,1)$ on the intervals of the form $(1-\tau_q,1-\tau_{q+1}]$. We shall assume $\varepsilon_q \to 0$ sufficiently fast as $q\to\infty$.

\begin{definition} \label{def:sol_v}
Given a probability space $(\Omega,\mathcal{F},\{\mathcal{F}_t\}_{t\geq 1}, \PP)$ supporting a family of i.i.d. Brownian motions $\{W^{k,\alpha}\}_{k,\alpha}$, a weak solution of \eqref{eq:veps3} is defined as a progressively measurable stochastic processes $v:\Omega \to C_w([0,1),H) \cap L^2_{loc}([0,1),H^1)$ almost surely such that, for every divergence-free test function $f \in C^\infty_c(\T^3 \times [0,1),\R^3)$ it holds almost surely for every $0\leq s<r < 1$
\begin{align*}
\langle v_r , f_r \rangle - \langle v_s , f_s \rangle
&=
\int_s^r \langle v_t , \partial_t f_t + (v_t \cdot \nabla) f_t +
\Delta f_t\rangle dt
\\
&\quad-
\int_s^r \varepsilon_t^{-1} \langle v_t,f_t\rangle dt
+
\sum_{k,\alpha}
\int_s^r \varepsilon_t^{-1} \langle \sigma_{k,\alpha}(\cdot,t),f_t  \rangle dW^{k,\alpha}_t.
\end{align*}
\end{definition}

Our main results are about total dissipation for progressively measurable (Leray-Hopf) weak solutions. For the sake of completeness, here we specify our notion of solutions.

\begin{definition} \label{def:sol_rho_bis}
Given a probability space $(\Omega,\mathcal{F},\{\mathcal{F}_t\}_{t\geq 1}, \PP)$ supporting a family of i.i.d. Brownian motions $\{W^{k,\alpha}\}_{k,\alpha}$ and a
weak solution $v$ of \eqref{eq:veps3}, a progressively measurable weak solution of \eqref{eq:passive_scalar_intro} is defined as a progressively measurable stochastic process $\rho : \Omega \to C_w([0,1),H) \cap L^2([0,1),H^1)$ almost surely such that, for every test function $f \in C^\infty_c(\T^3 \times [0,1),\R^3)$ it holds almost surely for every $0\leq s<r < 1$
\begin{align*}
\langle \rho_r , f_r \rangle - \langle \rho_s , f_s \rangle
=
\int_s^r \langle \rho_t , \partial_t f_t  +
\nu \Delta f_t\rangle dt
+
\int_s^r \langle \rho_t , (v_t \cdot \nabla) f_t \rangle dt,
\end{align*}
and for almost every $\omega \in \Omega$ there exists a full Lebesgue measure set $\mathcal{T} \subset [0,1)$ such that $0 \in \mathcal{T}$ and for every $r \in \mathcal{T}$, $t>r$ the following energy inequality holds almost surely
\begin{align*}
\|\rho_t\|_{L^2}^2
+
2\nu\int_r^t \|\nabla \rho_s\|_{L^2}^2 ds
\leq
\|\rho_r\|_{L^2}^2.
\end{align*}
\end{definition}

\begin{definition} \label{def:sol_u_bis}
Given a probability space $(\Omega,\mathcal{F},\{\mathcal{F}_t\}_{t\geq 1}, \PP)$ supporting a family of i.i.d. Brownian motions $\{W^{k,\alpha}\}_{k,\alpha}$ and a
weak solution $v$ of \eqref{eq:veps3}, a progressively measurable Leray-Hopf weak solution of \eqref{eq:NS_intro} is defined as a progressively measurable stochastic process $u : \Omega \to C_w([0,1),H) \cap L^2([0,1),H^1)$ almost surely such that, for every divergence-free test function $f \in C^\infty_c(\T^3 \times [0,1),\R^3)$ it holds almost surely for every $0\leq s<r < 1$
\begin{align*}
\langle u_r , f_r \rangle - \langle u_s , f_s \rangle
=
\int_s^r \langle u_t , \partial_t f_t + (u_t \cdot \nabla) f_t +
\nu \Delta f_t\rangle dt
+
\int_s^r \langle u_t , (v_t \cdot \nabla) f_t \rangle dt,
\end{align*}
and for almost every $\omega \in \Omega$ there exists a full Lebesgue measure set $\mathcal{T} \subset [0,1)$ such that $0 \in \mathcal{T}$ and for every $r \in \mathcal{T}$, $t>r$ the following energy inequality holds almost surely
\begin{align*}
\|u_t\|_{L^2}^2
+
2\nu\int_r^t \|\nabla u_s\|_{L^2}^2 ds
\leq
\|u_r\|_{L^2}^2.
\end{align*}
\end{definition}

Let us comment briefly on the previous notions of solutions \autoref{def:sol_rho_bis} and \autoref{def:sol_u_bis}.

Suppose $(\Omega,\mathcal{F},\{\mathcal{F}_t\}_{t\geq 1}, \PP)$, $\{W^{k,\alpha}\}_{k,\alpha}$ and $v$ are given.
For the passive scalar case \autoref{def:sol_rho_bis}, since we have assumed $v \in C_w([0,1),H) \cap L^2_{loc}([0,1),H^1)$ almost surely and $\rho_0 \in L^2$ there exists a unique weak solution $\rho$, which satisfies the energy inequality.
This can be shown following the lines of \cite[Corollary II.1]{DPLi89}. By uniqueness, the restriction $\rho|_{[0,t]}$ of $\rho$ to a time interval $[0,t]$, $t<1$ coincides, up to time $t$, with the solution obtained from the same initial condition and advecting velocity $v|_{[0,t]}$. In particular, $\rho$ is necessarily adapted to the filtration $\{\mathcal{F}_t\}_{t \geq 1}$ and therefore progressively measurable.
The same applies to the Navier-Stokes case \autoref{def:sol_u_bis} in dimension $d=2$.
However, in dimension $d=3$ we can not prove uniqueness of solutions, and the restriction $u|_{[0,t]}$ of a solution may depend on the values of $v$ after time $t$. For instance, it could be that the converging subsequence obtained by compactness depends on the whole trajectory of $v$.
As a consequence, there is no guarantee that $u$ is progressively measurable, and in order to regain adaptedness we may need to change the underlying probability space, see for instance \cite{FlGa95}.

The proof of \autoref{thm:main_passive}, \autoref{thm:main_NS_d=2} and \autoref{thm:main_NS} is based on a generator approach inspired by \cite{DePa22+}, which roughly speaking permits us to mimic the proof of \autoref{lem:h-1} even though technically speaking there is no Stratonovich corrector in the equations \eqref{eq:passive_scalar_intro} and \eqref{eq:NS_intro}, since now $v$ has positive decorrelation time and therefore $\rho$ and $u$ are processes with finite variation.

Similar arguments can be applied without additional difficulties to the Ornstein-Ulhenbeck approximation \eqref{eq:veps2_intro}, but we shall omit details for the sake of brevity.

Let us consider \eqref{eq:veps3} and let us split $v$ on the time interval $[1-\tau_q,1-\tau_{q+1}]$ as
\begin{align*}
v = \varepsilon^{-1/2}_q w + r,
\end{align*}
where $w(1-\tau_q) = 0$, $r(1-\tau_q)= v(1-\tau_q) \in H$ almost surely, and $w,r$ are divergence-free and evolve according to
\begin{align*}
dw
=
-\varepsilon^{-1}_q w dt
+
\varepsilon^{-1/2}_q Q^{1/2}_q dW,
\end{align*}
\begin{align*}
dr
=
-\varepsilon^{-1}_q r dt
+
A(\varepsilon^{-1/2}_q w + r) dt
+
b(\varepsilon^{-1/2}_q w + r,\varepsilon^{-1/2}_q w + r)dt.
\end{align*}
In the lines above we have denoted for simplicity
\begin{align*}
Q_q := \sum_{k,\alpha} (\theta_k^q)^2 ( a_{k,\alpha} e_k \otimes a_{k,\alpha} e_{-k} ),
\qquad
W := \sum_{k,\alpha} a_{k,\alpha} e_k W^{k,\alpha},
\end{align*}
so that $Tr(Q_q) = \kappa_q$ and
\begin{align*}
Q_q^{1/2}dW
=
\sum_{k,\alpha} \theta_k^q a_{k,\alpha} e_k dW^{k,\alpha}
=
\sum_{k,\alpha} \sigma_{k,\alpha} dW^{k,\alpha},
\end{align*}
and the operators $A$ and $b$ are defined as
\begin{align*}
A := \Delta,
\qquad
b(v_1,v_2) := -\Pi[(v_1 \cdot \nabla) v_2].
\end{align*}

Next we are going to collect some energy-type a priori estimates on the processes $v$, $w$, and $r$.
Then our total dissipation results hold true as soon as $v$ is a weak solution to \eqref{eq:veps3} that can be decomposed as $v = \varepsilon^{-1/2}_q w + r$ on each interval of the form $[1-\tau_q,1-\tau_{q+1}]$, these energy estimates hold true, and $\rho$ (resp. $u$) is a progressively measurable weak solution to \eqref{eq:passive_scalar_intro} (resp. Leray-Hopf weak solution to \eqref{eq:NS_intro}).

We point out that it is easy to exhibit at least one process $v$ satisfying this property, for instance by taking the limit of the Galerkin approximations on $(1-\tau_q,1-\tau_{q+1}]$
\begin{align*}
v^n = \varepsilon^{-1/2}_q w^n + r^n,
\qquad
n \in \N,
\end{align*}
to produce a (Leray-Hopf) weak solution $v$ to \eqref{eq:veps3} on $[1-\tau_q,1-\tau_{q+1}]$, living in a probability space $(\Omega^q,\mathcal{F}^q,\{\mathcal{F}^q_t\}_{t \geq 0},\PP^q)$ supporting the Brownian motions $\{W^{k,\alpha}\}_{k,\alpha}$ with $N_q \leq |k| \leq 2N_q$,
and then using continuity of $v$ e.g. in $H^{-1}$ to glue together solutions on different time intervals, see also \cite{FlRo08}.

It is worth mentioning that since each Brownian motion $W^{k,\alpha}$ has non-zero intensity $\theta_k$ for at most one time interval of the form $(1-\tau_q,1-\tau_{q+1}]$, the probability space $(\Omega,\mathcal{F},\{\mathcal{F}_t\}_{t \geq 0},\PP)$ supporting the whole family $\{W^{k,\alpha}\}_{k,\alpha}$ can be just taken as the product of the probability spaces $(\Omega^q,\mathcal{F}^q,\{\mathcal{F}^q_t\}_{t \geq 0},\PP^q)$.

Notice that, given $(\Omega,\mathcal{F},\{\mathcal{F}_t\}_{t \geq 0},\PP)$, $\{W^{k,\alpha}\}_{k,\alpha}$ and $v$ as above, there always exists a progressively measurable weak solution $\rho$ to \eqref{eq:passive_scalar_intro} and (in dimension $d=2$ only) $u$ to \eqref{eq:NS_intro}. This is a consequence of probabilistically weak existence and pathwise uniqueness, by Yamada-Watanabe Theorem.

However, when $d=3$ we are not able to construct Leray-Hopf weak solutions to \eqref{eq:NS_intro} that are adapted to the filtration $\{\mathcal{F}_t\}_{t \geq 0}$, and thus we need to define the probability space $(\Omega,\mathcal{F},\{\mathcal{F}_t\}_{t \geq 0},\PP)$ and Brownian motions $\{W^{k,\alpha}\}_{k,\alpha}$ taking into account the adaptedness of $u$, too.
This can be done considering simultaneously the Galerkin approximations of $v$ and $u$, working at fixed divergence-free initial condition $u_0 \in L^2$ and viscosity $\nu \in (0,1)$. More generally, one can fix countable families $\mathscr{C} \subset L^2$ with null divergence and $\mathscr{V} \subset (0,1)$ and consider simultaneously the Galerkin approximations of $v$ and $u^{u_0,\nu}$, where $u^{u_0,\nu}$ solves \eqref{eq:NS_intro} with initial condition $u_0 \in \mathscr{C}$ and viscosity $\nu \in \mathscr{V}$.
This marks the difference between the statements of \autoref{thm:main_passive}, \autoref{thm:main_NS_d=2} and the statement of \autoref{thm:main_NS}.

\subsubsection{Estimates on $v$}
The basic a priori estimate on $v_t$ we get from \eqref{eq:veps3}, which in particular holds true on the Galerkin approximations $v^n$ for every $t \in (1-\tau_{q},1-\tau_{q+1}]$, is the following
\begin{align*}
\mathbb{E}
\|v^n_{t} \|_{L^2}^2
&+
2 \int_{1-\tau_{q}}^t \mathbb{E}\|v^n_s \|_{H^1}^2 ds
+
2 \varepsilon^{-1}_q \int_{1-\tau_{q}}^t \mathbb{E}\|v^n_s \|_{L^2}^2 ds
\\
&\leq \nonumber
\mathbb{E}\|\Pi_n v_{1-\tau_{q}} \|_{L^2}^2
+
\varepsilon^{-2}_q \kappa_q (\tau_q-\tau_{q+1}),
\end{align*}
where in the right-hand-side $\Pi_n$ is the Fourier projector on modes $|k| \leq n$, and $v_{1-\tau_{q}}$ is considered as a given initial condtion (we suppose to have already defined the solution $v$              for times $t \leq 1-\tau_q$).
It is obtained by applying the It\=o formula to $\|v^n_t\|^2_{L^2}$ and taking expectations.

Therefore we deduce the following energy estimate on $v$:
\begin{align*}
\sup_{t \in (1-\tau_{q},1-\tau_{q+1}]}\mathbb{E}
\|v_{t} \|_{L^2}^2
&\leq
\mathbb{E}
\|v_{1-\tau_{q}} \|_{L^2}^2
+
\varepsilon^{-2}_q \kappa_q (\tau_q-\tau_{q+1})
\\
&\leq
\sup_{t \in (1-\tau_{q-1},1-\tau_{q}]}\mathbb{E}
\|v_{t} \|_{L^2}^2
+
\varepsilon^{-2}_q \kappa_q (\tau_q-\tau_{q+1}),
\end{align*}
and iterating for $q,q-1,q-2,\dots,1$ we obtain (define $v_t \equiv 0$ for times $t \leq 0$)
\begin{align} \label{eq:est_v1}
\sup_{t \in (1-\tau_{q},1-\tau_{q+1}]}\mathbb{E}
\|v_{t} \|_{L^2}^2
\leq
\sum_{k \leq q}
\varepsilon^{-2}_k \kappa_k (\tau_k-\tau_{k+1})
\lesssim
\varepsilon^{-2}_q \kappa_q.
\end{align}
Here we are assuming $\varepsilon^{-2}_q \kappa_q \gg \varepsilon^{-2}_{q-1} \kappa_{q-1}$.
Once we have the estimate above for the $L^2$ norm of $v$ at every fixed time, we can deduce as well
\begin{align} \label{eq:est_v2}
\int_{1-\tau_{q}}^{1-\tau_{q+1}} \mathbb{E}\|v_s\|_{H^1}^2 ds \lesssim
\varepsilon^{-2}_q \kappa_q,
\end{align}
and
\begin{align} \label{eq:est_v3}
\int_{1-\tau_{q}}^{1-\tau_{q+1}} \mathbb{E}\|v_s\|_{L^2}^2 ds \lesssim
\varepsilon^{-1}_q \kappa_q.
\end{align}

Applying the It\=o Formula to $\|v^n_t\|^4_{L^2}$ we get with similar arguments
\begin{align} \label{eq:est_v4}
\int_{1-\tau_{q}}^{1-\tau_{q+1}} \mathbb{E}\|v_s\|_{L^2}^4 ds \lesssim
\varepsilon^{-2}_q \kappa_q^2.
\end{align}

\subsubsection{Estimates on $w$}
We have the explicit expression for the stochastic convolution
\begin{align*}
w_t
&=
\varepsilon^{-1/2}_q\int_{1-\tau_q}^t e^{-\varepsilon^{-1}_q(t-s)} Q_q^{1/2} dW_s
\\
&=
\sum_{k,\alpha} \theta^q_k a_{k,\alpha} e^{2 \pi ik\cdot x}
\varepsilon^{-1/2}_q\int_{1-\tau_q}^t e^{-\varepsilon^{-1}_q(t-s)} dW^{k,\alpha}_s.
\end{align*}

The estimates we need on $w$ are the following: for every $t \in (1-\tau_q,1-\tau_{q+1}]$ and $\theta \geq 0$,
\begin{align} \label{eq:est_w1}
\mathbb{E} \left\|
\varepsilon^{-1/2}_q\int_{1-\tau_q}^t e^{-\varepsilon^{-1}_q(t-s)} Q_q^{1/2} dW_s
\right\|_{H^\theta}^2
\lesssim
\kappa_q N_q^{2\theta},
\end{align}
which can be proved on the Galerkin approximations $w^n$ by It\=o isometry. By Gaussianity we also have
\begin{align} \label{eq:est_w2}
\mathbb{E} \left\|
\varepsilon^{-1/2}_q\int_{1-\tau_q}^t e^{-\varepsilon^{-1}_q(t-s)} Q_q^{1/2} dW_s
\right\|_{H^\theta}^4
\lesssim
\kappa_q^2 N_q^{4\theta}.
\end{align}

If we want to put the supremum over time inside the expectation (this will be needed in the proof of \autoref{thm:main_NS}), we can invoke \cite[Lemma 3.1]{AsFlPa21}. As a result we get a similar estimate as \eqref{eq:est_w2}, up to a logarithmic factor in $\varepsilon_q^{-1}$
\begin{align} \label{eq:est_w3}
\mathbb{E} \sup_{t \in [1-\tau_q,1-\tau_{q+1}]} \left\|
\varepsilon^{-1/2}_q\int_{1-\tau_q}^t e^{-\varepsilon^{-1}_q(t-s)} Q_q^{1/2} dW_s
\right\|_{H^\theta}^4
\lesssim
\log^2(1+\varepsilon_q^{-1})
\kappa_q^2 N_q^{4\theta}.
\end{align}

\subsubsection{Estimates on $r$}
We are left with the a priori estimates on $r$.
Let $C_\varepsilon = -Id + \varepsilon A$ and rewrite
\begin{align*}
dr
&=
-\varepsilon^{-1} r dt
+
A(\varepsilon^{-1/2} w + r) dt
+
b(\varepsilon^{-1/2} w + r,\varepsilon^{-1/2} w + r)dt
\\
&=
\varepsilon^{-1} C_\varepsilon r dt
+
\varepsilon^{-1/2} Aw dt
+
b(v,\varepsilon^{-1/2} w + r)dt.
\end{align*}
Testing the equation against the solution itself, we have the following estimate for the Galerkin truncations $r^n$, for every $t \in (1-\tau_q,1-\tau_{q+1}]$
\begin{align*}
\|r^n_{t} \|_{L^2}^2
&+
2 \int_{1-\tau_{q}}^t \|r^n_s \|_{H^1}^2 ds
+
2 \varepsilon^{-1}_q \int_{1-\tau_{q}}^t \|r^n_s \|_{L^2}^2 ds
\\
&\leq
\| v_{1-\tau_q} \|_{L^2}^2
+
2\varepsilon^{-1/2}_q
\int_{1-\tau_{q}}^t
\langle Aw_s, r^n_s \rangle ds
+
2\varepsilon^{-1}_q
\int_{1-\tau_{q}}^t
\langle b(\varepsilon^{1/2}_q v^n_s, w^n_s), r^n_s \rangle ds.
\end{align*}
By Young's inequality, there exist unimportant constants $c<1$ and $C<\infty$ such that
\begin{align*}
\|r^n_{t} \|_{L^2}^2
&+
2\int_{1-\tau_q}^t \|r^n_s \|_{H^1}^2 ds
+
2 \varepsilon^{-1}_q \int_{1-\tau_{q}}^t \|r^n_s \|_{L^2}^2 ds
\\
&\leq \nonumber
\| v_{1-\tau_q} \|_{L^2}^2
+
c \varepsilon^{-1}_q \int_{1-\tau_{q}}^t \|r^n_s \|_{L^2}^2 ds
+
C \int_{1-\tau_{q}}^t \| w^n_s \|_{H^2}^2 ds
\\
&\quad+ \nonumber
C \varepsilon^{-1}_q \int_{1-\tau_{q}}^t
\|\varepsilon^{1/2}_q v^n_s \|_{L^2}
\| w^n_s \|_{H^3}
\| r^n_s \|_{L^2} ds
\\
&\leq \nonumber
\| v_{1-\tau_q} \|_{L^2}^2
+
c \varepsilon^{-1}_q \int_{1-\tau_{q}}^t \|r^n_s \|_{L^2}^2 ds
+
C \int_{1-\tau_q}^t \| w^n_s \|_{H^2}^2 ds
\\
&\quad+ \nonumber
C \varepsilon^{-1}_q \int_{1-\tau_{q}}^t
\|\varepsilon^{1/2}_q v^n_s \|_{L^2}^4 ds
+
C \varepsilon^{-1}_q \int_{1-\tau_{q}}^t
\| w^n_s \|_{H^3}^4 ds.
\end{align*}

Taking expectations, using \eqref{eq:est_v4}, \eqref{eq:est_w2} and passing to the limit $n \to \infty$, we deduce the following preliminary estimate on $r$:
\begin{align} \label{eq:r_eps_L2}
\int_{1-\tau_q}^{1-\tau_{q+1}}
\mathbb{E}\|r_s\|_{L^2}^2 ds
\lesssim
\kappa_q^2 N_q^{12},
\end{align}
which is not very good (recall that $\kappa_q^2 N_q^{12} \to \infty$ relatively fast as $q \to \infty$) but is auxiliary to isolate the leading order terms in the dynamics of $r$.
Indeed, rewrite
\begin{align*}
dr
&=
\varepsilon^{-1} C_\varepsilon r dt
+
\varepsilon^{-1/2} Aw dt
+
b(v,\varepsilon^{-1/2} w + r)dt
\\
&=
\varepsilon^{-1} C_\varepsilon r dt
+
\varepsilon^{-1/2} Aw dt
+
\varepsilon^{-1} b( w , w )dt
+
\varepsilon^{-1/2} b( r , w )dt
+
\varepsilon^{-1/2} b( \varepsilon^{1/2} v , r )dt.
\end{align*}
Taking into account $r_{1-\tau_q}=v_{1-\tau_q}$, the mild formulation of the previous equation takes the form
\begin{align*}
	r_t
	&=
	e^{\varepsilon^{-1} C_\varepsilon (t-1+\tau_q)} v_{1-\tau_q}
	+
	\varepsilon^{-1/2} \int_{1-\tau_{q}}^t
	e^{\varepsilon^{-1} C_\varepsilon (t-s)}
	A w_s ds
	+
	\varepsilon^{-1} \int_{1-\tau_{q}}^t
	e^{\varepsilon^{-1} C_\varepsilon (t-s)}
	b(w_s,w_s) ds
	\\
	&\quad
	+
	\varepsilon^{-1/2} \int_{1-\tau_{q}}^t
	e^{\varepsilon^{-1} C_\varepsilon (t-s)}
	b(r_s,w_s) ds
	+
	\varepsilon^{-1/2} \int_{1-\tau_{q}}^t
	e^{\varepsilon^{-1} C_\varepsilon (t-s)}
	b(\varepsilon^{1/2} v_s,r_s) ds.
\end{align*}
Let $\theta_0 > 5/2$.
After taking expectation and time integral on $[1-\tau_q,1-\tau_{q+1}]$, we can separately estimate in $H^{-\theta_0}$ each term on the right-hand-side of the equation above as follows.
First,
	\begin{align*}
		\int_{1-\tau_{q}}^{1-\tau_{q+1}}
		\mathbb{E} \left\|
		e^{\varepsilon^{-1} C_\varepsilon (t-1+\tau_{q})}
		v_{1-\tau_q}\right\|_{H^{-\theta_0}} dt
		\lesssim
		\varepsilon_q \mathbb{E} \|v_{1-\tau_q}\|_{L^2}
		\lesssim
		\varepsilon_q \varepsilon_{q-1}^{-1} \kappa_{q-1}^{1/2}
		\lesssim
		\varepsilon_q^{1/2} ,
\end{align*}
where we have used \eqref{eq:est_v1} and assuming for every $q \geq 1$
\begin{align} \label{eq:condition_vareps}
\varepsilon_{q-1}^{-1} \kappa_{q-1}^{1/2} \lesssim \varepsilon_q^{-1/2}.
\end{align}

Moreover, by Young's convolution inequality and previous estimates \eqref{eq:est_v3}, \eqref{eq:est_w1} on $v$, $w$ and \eqref{eq:r_eps_L2} on $r$ we have
\begin{align*}
&\varepsilon^{-1/2}_q \int_{1-\tau_{q}}^{1-\tau_{q+1}}
\mathbb{E} \left\| \int_{1-\tau_{q}}^t
e^{\varepsilon^{-1} C_\varepsilon (t-s)}
A w_s \,ds
\right\|_{H^{-\theta_0}} dt
\\&\lesssim\varepsilon^{-1/2}_q \int_{1-\tau_{q}}^{1-\tau_{q+1}}
 \int_{1-\tau_{q}}^t
e^{-\varepsilon^{-1} (t-s)}
\mathbb{E} \| w_s\|_{L^2} \,dsdt
\lesssim
\varepsilon^{1/2}_q \kappa_q^{1/2},
\end{align*}
and similarly
\begin{align*}
\varepsilon^{-1/2}_q  \int_{1-\tau_{q}}^{1-\tau_{q+1}}
\mathbb{E} \left\| \int_{1-\tau_{q}}^t
e^{\varepsilon^{-1} C_\varepsilon (t-s)}
b(r_s, w_s) \,ds
\right\|_{H^{-\theta_0}}dt
\lesssim
\varepsilon^{1/2}_q \kappa_q^{3/2} N_q^6,
\end{align*}
\begin{align*}
\varepsilon^{-1/2}_q  \int_{1-\tau_{q}}^{1-\tau_{q+1}}
\mathbb{E} \left\| \int_{1-\tau_{q}}^t
e^{\varepsilon^{-1} C_\varepsilon (t-s)}
b(\varepsilon^{1/2}_q v_s, r_s) \,ds
\right\|_{H^{-\theta_0}} dt
\lesssim
\varepsilon^{1/2}_q \kappa_q^{3/2} N_q^6.
\end{align*}
In the second and last inequality we have used that the operator $b:H \times H \to H^{-\theta_0}$ is bounded, therefore
\begin{align*}
\mathbb{E}  \left\|b(r_s, w_s)\right\|_{H^{-\theta_0}}
\lesssim
\mathbb{E}  \|r_s\|_{L^2} \|w_s \|_{L^2}
\leq
\left(\mathbb{E}  \|r_s\|_{L^2}^2\right)^{1/2}
\left(\mathbb{E}  \|w_s \|_{L^2}^2\right)^{1/2},
\end{align*}
and similarly for the term $b(\varepsilon^{1/2}_q v_s, r_s)$.
Putting all together, we obtain
\begin{align} \label{eq:est_r1}
\int_{1-\tau_q}^{1-\tau_{q+1}} \mathbb{E} \|r_s - \tilde{r}_s \|_{H^{-\theta_0}} ds
\lesssim
\varepsilon^{1/2}_q \kappa_q^{3/2} N_q^6,
\end{align}
where we have defined
$$\tilde{r}_t = \varepsilon_{q}^{-1} \int_{1-\tau_{q}}^t
	e^{\varepsilon^{-1} C_\varepsilon (t-s)}
	b(w_s,w_s) ds.$$
An estimate similar to \eqref{eq:r_eps_L2} holds for $\tilde{r}$, indeed
\begin{align*}
\int_{1-\tau_q}^{1-\tau_{q+1}} \mathbb{E} \| \tilde{r}_s \|_{L^2}^2 ds
&\lesssim
\int_{1-\tau_q}^{1-\tau_{q+1}} \int_{1-\tau_q}^s \varepsilon_q^{-1} e^{-\varepsilon^{-1} (s-r)}\mathbb{E} \| b(w_r,w_r) \|_{L^2}^2  drds
\\
&\lesssim
\int_{1-\tau_q}^{1-\tau_{q+1}} \int_{1-\tau_q}^s \varepsilon_q^{-1} e^{-\varepsilon^{-1} (s-r)}\mathbb{E} \| w_r \|_{H^{3}}^4  drds
\lesssim
\kappa_q^2 N_q^{12},
\end{align*}
and more generally, using $b:H^{3+\theta} \times H^{3+\theta} \to H^\theta$ continuously for every $\theta \geq 0$,
\begin{align} \label{eq:tilde_r}
\int_{1-\tau_q}^{1-\tau_{q+1}} \mathbb{E} \| \tilde{r}_s \|_{H^\theta}^2 ds
&\lesssim
\kappa_q^2 N_q^{12+4\theta}.
\end{align}
By interpolation we also get for every $\theta \in (0,\theta_0)$ and $p$ such that $\frac{p\theta}{\theta_0}+\frac{p(1-\theta/\theta_0)}{2}\leq 1$
\begin{align*}
\int_{1-\tau_q}^{1-\tau_{q+1}} \mathbb{E} \|r_s- \tilde{r}_s \|_{H^{-\theta}}^p ds
&\lesssim
\int_{1-\tau_q}^{1-\tau_{q+1}} \mathbb{E}
\|r_s- \tilde{r}_s \|_{H^{-\theta_0}}^{p\theta/\theta_0}
\|r_s- \tilde{r}_s \|_{L^2}^{p(1-\theta/\theta_0)}  ds
\\
&\lesssim
\left( \varepsilon^{1/2}_q \kappa_q^{3/2} N_q^6 \right)^{p\theta/\theta_0}
\left( \kappa_q^2 N_q^{12} \right)^{p(1-\theta/\theta_0)/2}.
\end{align*}
In particular, for $\theta=1/2$ and $p=4/3$
\begin{align} \label{eq:est_r2}
\left(\int_{1-\tau_q}^{1-\tau_{q+1}} \mathbb{E}\|r_s- \tilde{r}_s\|_{H^{-1/2}}^{4/3} ds \right)^{3/4}
&\lesssim
\left( \varepsilon^{1/2}_q \kappa_q^{3/2} N_q^6 \right)^{1/2\theta_0}
\left( \kappa_q^2 N_q^{12} \right)^{(1-1/2\theta_0)/2}
\lesssim
\varepsilon^{1/12}_q
\kappa_q^{2}
N_q^{6}.
\end{align}

\subsection{Dissipation}
As we have seen in \autoref{sec:white_noise}, when $v$ is a white-in-time noise we can prove anomalous dissipation for $u$ thanks to the presence of a Stratonovich corrector in the It\=o formulation of \eqref{eq:NS_transport}.
The goal of this subsection is to ``find'' the hidden Stratonovich corrector in the dynamics of $u$ solution of \eqref{eq:NS_intro}. We will focus on the Navier-Stokes case \eqref{eq:NS_intro} only, and in particular on \autoref{thm:main_NS}; but the same arguments work with minor modifications in all the other cases.

Technically speaking, there isn't any Stratonovich corrector in \eqref{eq:NS_intro} since $u$ has finite variation; but morally speaking, the solution $v$ of \eqref{eq:veps3} excites the small scales of $u$ just as well as the white-in-time transport noise.
Therefore, it is reasonable to expect that the same dissipation mechanism, induced by transfer of energy to high wavenumbers, can happen in this case, too.

Let us work on a fixed time interval $(1-\tau_q,1-\tau_{q+1}]$.
Let us consider the process
\begin{align*}
U
:=
u
+
\varepsilon^{1/2}_q b(w,u)
+
\frac{\varepsilon_q}{2} b(w,b(w,u))
+V,
\end{align*}
where the two correctors $\varepsilon^{1/2}_q b(w,u)$ and $\frac{\varepsilon_q}{2} b(w,b(w,u))$ are motivated by the heuristic arguments presented in \autoref{ssec:rough} and serve to reintroduce the time roughness producing the Stratonovich corrector in \eqref{eq:NS_intro}, and the auxiliary process $V$ is defined as
\begin{align*}
V
:=
\frac{\varepsilon_q}{2} b(b(w,w),u)
+
\varepsilon_q b((-C_\varepsilon)^{-1} \tilde{r},u),
\end{align*}
and is needed to compensate for the term $b(r,u)dt$ appearing in the dynamics of $u$.
Indeed, by It\=o formula we have
\begin{align*}
\frac{\varepsilon_q}{2}  d \left( b(b(w,w),u)\right)
&=
-b(b(w,w),u) \,dt
+
\sum_{k,\alpha} (\theta_k^q)^2 b(b(a_{k,\alpha}e_k,a_{k,\alpha}e_{-k}),u) \,dt
\\
&\quad
+\frac{\varepsilon_q^{1/2}}{2} b(b(Q_q^{1/2}dW_t,w),u)
+\frac{\varepsilon_q^{1/2}}{2} b(b(w,Q_q^{1/2}dW_t),u)
\\
&\quad+
\frac{\varepsilon_q^{1/2}}{2} b(b(w,w),b(w,u)) \,dt
+
\frac{\varepsilon_q}{2} b(b(w,w),\nu Au + b(u,u)+b(r,u))\,dt
\end{align*}
and
\begin{align*}
\varepsilon_q d \left( b((-C_\varepsilon)^{-1} \tilde{r},u)\right)
&=
-b(\tilde{r},u) \,dt+b((-C_\varepsilon)^{-1} b(w,w),u)\,dt
\\
&\quad+
\varepsilon_q b((-C_\varepsilon)^{-1} \tilde{r},\nu Au + b(u,u)+b(r,u))\,dt
\\
&\quad+
\varepsilon^{1/2}_q b((-C_\varepsilon)^{-1} \tilde{r},b(w,u))\,dt.
\end{align*}
Notice that the It\=o corrector $\sum_{k,\alpha} (\theta_k^q)^2 b(b(a_{k,\alpha}e_k,a_{k,\alpha}e_{-k}),u)$ in the dynamics of  $\frac{\varepsilon_q}{2} b(b(w,w),u)$ equals zero since $b(a_{k,\alpha}e_k,a_{k,\alpha}e_{-k})=0$ for every $k \in \Z^3_0$ and $\alpha \in \{1,2\}$.
Moreover, since it holds $(-C_\varepsilon)^{-1} - Id = \varepsilon_q A (-C_\varepsilon)^{-1}$ (this can be checked multiplying both expression by $-C_\varepsilon$) we also have
\begin{align*}
b((-C_\varepsilon)^{-1} b(w,w),u) - b(b(w,w),u)
&=
\varepsilon_q b(A (-C_\varepsilon)^{-1} b(w,w),u).
\end{align*}


Therefore, the process $U$ evolves according to
\begin{align} \label{eq:dU}
dU
&=
\nu A u \,dt
+
b(u,u) \,dt
+
b(r-\tilde{r},u) \,dt
+
\sum_{k,\alpha} (\theta_k^q)^2 b(a_{k,\alpha}e_k,b(a_{k,\alpha}e_{-k},u))
\,dt
+
b(Q^{1/2}_q dW_t, u)
\\
&\quad+ \nonumber
\varepsilon^{1/2}_q b(w,\nu A u
+
b(u,u)
+
b(r,u)) \,dt
+
\frac{\varepsilon^{1/2}_q}{2} b(w,b(w,b(w,u))  \,dt
\\
&\quad+ \nonumber
\frac{\varepsilon_q}{2} b(w,b(w,\nu Au+b(u,u)+b(r,u))) \,dt
\\
&\quad+ \nonumber
\frac{\varepsilon^{1/2}_q}{2} b(Q^{1/2}_q dW_t,b(w,u))
+
\frac{\varepsilon^{1/2}_q}{2} b(w,b(Q^{1/2}_q dW_t,u))
\\
&\quad \nonumber
+\frac{\varepsilon_q^{1/2}}{2} b(b(Q_q^{1/2}dW_t,w),u)
+\frac{\varepsilon_q^{1/2}}{2} b(b(w,Q_q^{1/2}dW_t),u)
\\
&\quad \nonumber
+\frac{\varepsilon_q^{1/2}}{2} b(b(w,w),b(w,u)) \,dt
+
\frac{\varepsilon_q}{2} b(b(w,w),\nu Au + b(u,u)+b(r,u))\,dt
\\
&\quad \nonumber
+\varepsilon_q b((-C_\varepsilon)^{-1} \tilde{r},\nu Au + b(u,u)+b(r,u))\,dt
\\
&\quad \nonumber
+\varepsilon^{1/2}_q b((-C_\varepsilon)^{-1} \tilde{r},b(w,u))\,dt
+\varepsilon_q b(A (-C_\varepsilon)^{-1} b(w,w),u) \,dt
.
\end{align}

The term $\sum_{k,\alpha} (\theta_k^q)^2 b(a_{k,\alpha}e_k,b(a_{k,\alpha}e_{-k},u))$ comes from the second derivative of $\frac{\varepsilon_q}{2} b(w,b(w,u))$ with respect to $w$, and recalling \eqref{eq:defin_strat} it coincides with the Stratonovich-to-It\=o corrector applied to $u$:
\begin{align*}
\sum_{k,\alpha} (\theta_k^q)^2 b(a_{k,\alpha}e_k,b(a_{k,\alpha}e_{-k},u))
=
S_q(u).
\end{align*}

Thus, we can rewrite
\begin{align} \label{eq:Squ}
S_q(u)
&=
\frac25 \kappa_q \Delta U
+
\left( S_q(U) - \frac25 \kappa_q \Delta U \right)
-
\varepsilon^{1/2}_q
S_q(b(w,u)
-
\frac{\varepsilon_q}{2}
S_q(b(w,b(w,u)))
-
S_q(V),
\end{align}
and the term $\frac25 \kappa_q \Delta U$ gives us enough dissipation to control a negative Sobolev norm in the mild formulation of $U$.
More precisely, let $\Pi_L$ be the Fourier projector onto modes $|k| \leq N_q^{1-\delta}$, for some $\delta \in (0,1)$, and denote $U^L = \Pi_L U$.
We have
\begin{lem} \label{lem:h-4}
Let $u$ be a Leray-Hopf solution to \eqref{eq:NS_intro} with zero-mean, divergence-free initial condition $u_0$ satisfying $\|u_0\|_{L^2}\leq M$ for some deterministic $1\leq M < \infty$, and let $U^L$ be defined as above.
Then for every $q \in \N$ there exists a choice of the parameters $\tau_q$, $N_q$, $\varepsilon_q$ and $\kappa_q \sim N_q^3$ such that for every $\delta \in (1/6,1)$
\begin{align*}
\mathbb{E}\left[\sup_{t \in [1-\tau_q/2,1-\tau_{q+1}]}
\|U^L\|_{H^{-4}} \right]
\lesssim_{\delta}
\frac{M^2}{(\nu+\kappa_q)^{1-\delta}}
+
\frac{\varepsilon_q^{1/12}\kappa_q^2 N_q^6 M}{\nu^{1/4}}.
\end{align*}
\end{lem}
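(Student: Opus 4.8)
The plan is to transplant the mild-formulation argument of \autoref{lem:h-1} to the auxiliary process $U^L=\Pi_L U$, exploiting that, by \eqref{eq:Squ}, the Stratonovich corrector $S_q(u)$ hidden in the dynamics \eqref{eq:dU} of $U$ reproduces the strong dissipative term $\tfrac25\kappa_q\Delta U$. Concretely, I would combine $\tfrac25\kappa_q\Delta U^L$ with $\nu Au$ (writing $\nu Au=\nu\Delta U^L-\nu A(U-u)$ and sending the mismatch $\nu A(U-u)$ into the drift) and let $P(t):=e^{(\nu+2\kappa_q/5)\Delta t}$ be the associated semigroup, exactly as in the proof of \autoref{lem:h-1}. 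Projecting \eqref{eq:dU} onto the low modes $\Pi_L$ and using \eqref{eq:Squ} to rewrite $S_q(u)$, I obtain a Duhamel formula
\[
U^L_t=P(t-(1-\tau_q))\,U^L_{1-\tau_q}+\int_{1-\tau_q}^tP(t-s)\,\Pi_L\,\mathcal{D}_s\,ds+\int_{1-\tau_q}^tP(t-s)\,\Pi_L\,d\mathcal{M}_s,
\]
where $\mathcal{D}$ collects all drift terms of \eqref{eq:dU} other than the extracted $\tfrac25\kappa_q\Delta U^L$, and $\mathcal{M}$ collects the It\=o terms $b(Q_q^{1/2}dW,u)$ together with the $\varepsilon_q^{1/2}$-order noise--corrector contributions. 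The initial datum is harmless because $w_{1-\tau_q}=0$ and $\tilde r_{1-\tau_q}=0$ force $U^L_{1-\tau_q}=u^L_{1-\tau_q}$, with $\|u^L_{1-\tau_q}\|_{L^2}\le M$ from the energy inequality.

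The decisive mechanism, as in \autoref{lem:h-1}, is that the target norm $H^{-4}$ is chosen so negative that the smoothing of $P$ can be traded for powers of $(\nu+\kappa_q)^{-1}$: for any $\sigma\ge0$ one has $\|P(t)g\|_{H^{-4}}\lesssim((\nu+\kappa_q)t)^{-\sigma}\|g\|_{H^{-4-2\sigma}}\le((\nu+\kappa_q)t)^{-\sigma}\|g\|_{L^2}$, and likewise starting from any negative space. Using this together with $\|u_t\|_{L^2}\le M$, the four leading contributions --- the initial datum, the nonlinearity $b(u,u)$, the Stratonovich remainder $S_q(U^L)-\tfrac25\kappa_q\Delta U^L$ estimated by $C\kappa_qN_q^{-\delta}\|U^L\|_{H^1}$ through \eqref{eq:bound_strat}, and the genuine stochastic integral $b(Q_q^{1/2}dW,u)$ (controlled by \cite[Lemma 2.5]{FlGaLu21+} with $\sup_t$ inside the expectation exactly as in \autoref{lem:h-1}) --- all reproduce, after imposing balance conditions of the type \eqref{eq:condition_parameters} and \eqref{eq:condition_vareps} (now with $\kappa_q\sim N_q^3$ and $\delta>1/6$), the first term $\tfrac{M^2}{(\nu+\kappa_q)^{1-\delta}}$.

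It remains to control the genuinely new, perturbative terms produced by the finite decorrelation time of $v$: first $b(r-\tilde r,u)$, and second the $\varepsilon_q^{1/2}$- and $\varepsilon_q$-order drifts and the auxiliary contributions $S_q(V)$, $S_q(b(w,u))$, $S_q(b(w,b(w,u)))$ appearing in \eqref{eq:Squ} and \eqref{eq:dU}. For the most delicate one I would place $r-\tilde r$ in $H^{-1/2}$ via the interpolated bound \eqref{eq:est_r2}, and pair it inside $b$ with $u$ measured in $L^4_tH^{1/2}$; the latter is estimated by interpolating the energy bounds $\|u\|_{L^2_tH^1}\lesssim M\nu^{-1/2}$ and $\|u\|_{L^\infty_tL^2}\le M$, which is precisely the origin of the factor $\nu^{-1/4}$ and of the second term $\tfrac{\varepsilon_q^{1/12}\kappa_q^2N_q^6M}{\nu^{1/4}}$. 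All the remaining terms carry a strictly positive power of $\varepsilon_q$ and depend on $w$ only, whose every $H^\theta$-norm is controlled by \eqref{eq:est_w1}--\eqref{eq:est_w3} at the price of powers of $\kappa_q$ and $N_q$; since $w$ is spatially smooth, the derivatives falling on $u$ through $Au=\Delta u$ or through $b$ are transferred onto $w$ by duality (paying further $N_q$-powers), and the resulting polynomial-in-$(\kappa_q,N_q)$ bounds are absorbed into the second term by choosing $\varepsilon_q\to0$ fast enough, in the spirit of the choice $\varepsilon_q\sim N_q^{-K}$ anticipated in \autoref{rmk:OU}.

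The main obstacle is the term $b(r-\tilde r,u)$: unlike the transport-noise case the remainder $r$ does not vanish, and it is both rough (only $H^{-1/2}$) and large (a constant $\kappa_q^2N_q^6$ via \eqref{eq:r_eps_L2} and \eqref{eq:est_r1}), so it cannot be paired naively with $u\in L^2$ inside the derivative-carrying bilinear form $b$. The resolution is twofold: isolating the explicit leading part $\tilde r$ of $r$ (already achieved in \eqref{eq:est_r1}), so that only the small remainder $r-\tilde r$ interacts with $u$, and borrowing the missing half-derivative from the $H^1$-regularity of $u$ supplied by the energy inequality, at the unavoidable cost of $\nu^{-1/4}$. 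A secondary difficulty is the bookkeeping needed to verify that every one of the many terms of \eqref{eq:dU} carries either a smallness factor $(\nu+\kappa_q)^{-(1-\delta)}$ or a positive power of $\varepsilon_q$; this is where the negativity of the exponent $-4$ and the estimates \eqref{eq:est_w1}--\eqref{eq:est_w3} on $w$ are used repeatedly, and where the constraint $\delta>1/6$ enters to keep the Stratonovich remainder $\kappa_qN_q^{-\delta}\|U^L\|_{H^1}$ subordinate to $\tfrac{M^2}{(\nu+\kappa_q)^{1-\delta}}$ under the scaling $\kappa_q\sim N_q^3$.
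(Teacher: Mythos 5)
Your proposal is correct and follows essentially the same route as the paper's proof: the Duhamel formula for $U^L$ with the semigroup $P=e^{(\nu+2\kappa_q/5)\Delta}$ after extracting the dissipative term via \eqref{eq:Squ}, the bound \eqref{eq:bound_strat} for the Stratonovich remainder, the stochastic-convolution estimate of \cite[Lemma 2.5]{FlGaLu21+} for the leading It\=o term, the pairing of $b(r-\tilde{r},u)$ through $H^{1/2}\times H^{-1/2}$ duality with $L^4_t$/$L^{4/3}_t$ H\"older, \eqref{eq:est_r2} and the interpolated energy bound producing exactly the factor $\varepsilon_q^{1/12}\kappa_q^2 N_q^6 M\nu^{-1/4}$, and the absorption of all remaining $\varepsilon_q$-weighted terms by taking $\varepsilon_q$ small. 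The only cosmetic difference is that you do not name the maximal inequality for stochastic convolutions (the paper invokes \cite[Theorem 1.1]{DaPZ92}) for the $\varepsilon_q^{1/2}$-order noise--corrector integrals, but this does not affect the structure of the argument.
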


\begin{proof}
Let $P$ be the semigroup generated by $\nu A + 2\kappa_q \Delta/5 = (\nu +2\kappa_q/5)\Delta$, and consider the mild formulation of \eqref{eq:dU} for times $t \in [1-\tau_q/2,1-\tau_{q+1}]$, taking also \eqref{eq:Squ} into account.

First, at time $t=1-\tau_q$ we have $U^L=u^L$ by definition of $w$, therefore
\begin{align*}
\mathbb{E}\left[ \sup_{t \in [1-\tau_q/2,1-\tau_{q+1}]}
\|P(t-(1-\tau_q)) U^L_{1-\tau_q}\|_{H^{-4}}
\right]
\lesssim
\frac{M}{(\nu+\kappa_q)\tau_q}.
\end{align*}
We shall assume hereafter condition \eqref{eq:condition_parameters} on $\tau_q$ and $\kappa_q$ as in \autoref{lem:h-1}, and moreover $(\nu +\kappa_q)^{-\delta} \leq \tau_q$.
In addition, we will take $\varepsilon_q$ satisfying \eqref{eq:condition_vareps} and small with respect to the other parameters, so to control easily all the terms multiplied by powers of $\varepsilon_q$.
For these terms, we do not need the action of the semigroup $P$ to prove smallness, and we will use the simple estimate
$$
\mathbb{E}\left[\sup_{t \in [1-\tau_q/2,1-\tau_{q+1}]} \left\| \int_{1-\tau_q}^t P(t-s) \Pi_L\dots ds \right\|_{H^{-4}} \right]
\lesssim
\int_{1-\tau_q}^{1-\tau_{q+1}} \mathbb{E}\left\| \dots \right\|_{H^{-4}} ds.
$$
Having said this, let us control the terms appearing in \eqref{eq:Squ}, which permits us to isolate the strong dissipation term $\frac25 \kappa_q \Delta U^L$.
By \eqref{eq:est_w1}, \eqref{eq:r_eps_L2}, using $\|S_q(u)\|_{H^{-4}}\lesssim \kappa_q\|u\|_{H^{-2}}$ and the Sobolev embedding we have for $\theta_0>5/2$
\begin{align*}&
\varepsilon_q^{1/2}
\mathbb{E}\left[
\int_{1-\tau_q}^{1-\tau_{q+1}} \left\|
 S_q(b(w_s,u_s)) \right\|_{H^{-4}}ds
\right]\lesssim \varepsilon_q^{1/2}\kappa_q
\mathbb{E}\left[
\int_{1-\tau_q}^{1-\tau_{q+1}}
 \|w_s\otimes u_s \|_{H^{-1}}ds
\right]
\\&\lesssim\varepsilon_q^{1/2}\kappa_q
\int_{1-\tau_q}^{1-\tau_{q+1}}
 \mathbb{E}\|w_s\|_{L^\infty}\| u_s \|_{L^2}ds
\lesssim
\varepsilon_q^{1/2}
\kappa_q^{3/2} N_q^{\theta_0-1} M,
\end{align*}
\begin{align*}&
\varepsilon_q
\mathbb{E}\left[
\int_{1-\tau_q}^{1-\tau_{q+1}} \left\|
 S_q(b(w_s,b(w_s, u_s))) \right\|_{H^{-4}}ds
\right]\lesssim \varepsilon_q\kappa_q
\mathbb{E}\left[
\int_{1-\tau_q}^{1-\tau_{q+1}}
 \|w_s\otimes b(w_s,u_s) \|_{H^{-1}}ds
\right]
\\&\lesssim\varepsilon_q\kappa_q
\int_{1-\tau_q}^{1-\tau_{q+1}}
 \mathbb{E}\|w_s\|_{H^{\theta_0}}^2\| u_s \|_{L^2}ds
\lesssim
\varepsilon_q
\kappa_q^{2} N_q^{2\theta_0} M,
\end{align*}
and similarly by \eqref{eq:tilde_r}
\begin{align*}
\varepsilon_q
\mathbb{E}\left[
\int_{1-\tau_q}^{1-\tau_{q+1}} \left\| S_q(b(b(w_s,w_s), u_s)))
\right\|_{H^{-4}} ds \right]
\lesssim
\varepsilon_q
\kappa_q^{2} N_q^{2\theta_0} M,
\end{align*}
\begin{align*}
\varepsilon_q
\mathbb{E}\left[
\int_{1-\tau_q}^{1-\tau_{q+1}} \left\| S_q(b((-C_\varepsilon)^{-1}\tilde{r}_s, u_s))
\right\|_{H^{-4}} ds \right]
\lesssim \varepsilon_q
\kappa_q^2 N_q^{4+2\theta_0}M.
\end{align*}

Moreover, by \eqref{eq:est_w2} and \eqref{eq:tilde_r} the correctors giving the difference $U-u$ are small, thus
\begin{align*}
\mathbb{E}&\left[\sup_{t \in [1-\tau_q/2,1-\tau_{q+1}]}
\left\|
\int_{1-\tau_q}^t
P(t-s)
\left(
S_q(U^L) - \frac25 \kappa_q \Delta U^L
\right) ds
\right\|_{H^{-4}}
\right]
\\
&\lesssim
\kappa_q N_q^{-\delta}
\mathbb{E}\left[\sup_{t \in [1-\tau_q/2,1-\tau_{q+1}]}
\int_{1-\tau_q}^t
\left\| P(t-s)U^L \right\|_{H^{-2}}ds
\right]
\\
&\lesssim
{\kappa_q^\varepsilon} N_q^{-\delta} M\left( 1 + \varepsilon_q^{1/2} \kappa_q^{1/2}N_q^{\theta_0-1} + \varepsilon_q \kappa_q N_q^{4+2\theta_0} \right),
\end{align*}
for arbitrary $\varepsilon>0$ small, coming from the action of the semigroup.

Similarly we have
\begin{align*}
\mathbb{E}&\left[\sup_{t \in [1-\tau_q/2,1-\tau_{q+1}]}
\left\|
\int_{1-\tau_q}^t
P(t-s)\Pi_L
\nu A (u_s-U_s)
 ds
\right\|_{H^{-4}}
\right]
\\
&\lesssim M\left( \varepsilon_q^{1/2} \kappa_q^{1/2}N_q^{\theta_0} + \varepsilon_q \kappa_q N_q^{6+2\theta_0} \right)
\end{align*}

Let us now move to the other terms in \eqref{eq:dU}. We  use \eqref{eq:est_w2}, \eqref{eq:r_eps_L2} and
have
\begin{align*}
\varepsilon_q^{1/2}
\mathbb{E}&\left[
\int_{1-\tau_q}^{1-\tau_{q+1}} \left\|
b(w_s,\nu A u_s
+
b(u_s,u_s)+
b(r_s,u_s)
)\right\|_{H^{-4}}
ds
\right]
\\
&\lesssim
\varepsilon_q^{1/2}
\mathbb{E}\left[
\int_{1-\tau_q}^{1-\tau_{q+1}}
( \|w_s\|_{H^{\theta_0+1}}\|u_s \|_{L^2}+\|w_s\|_{H^{\theta_0}}(\|u_s\|^2_{L^2}+\|u_s\|_{L^2}\|r_s\|_{L^2})ds
\right]
\\
&\lesssim
\varepsilon_q^{1/2}\kappa_q^{3/2} N_q^{6+\theta_0}
M^2,
\end{align*}
\begin{align*}
\varepsilon_q
\mathbb{E}&\left[
\int_{1-\tau_q}^{1-\tau_{q+1}} \left\|
b(w_s,b(w_s,\nu Au_s
+b(u_s,u_s)+b(r_s,u_s)))\right\|_{H^{-4}}
ds
 \right]
\\
&\lesssim
\varepsilon_q
\mathbb{E}\left[
\int_{1-\tau_q}^{1-\tau_{q+1}}
( \|w_s\|_{H^{\theta_0+1}} \|w_s\|_{H^{\theta_0+2}}\|u_s \|_{L^2}+\|w_s\|_{H^{\theta_0}}\|w_s\|_{H^{\theta_0+1}}(\|u_s\|^2_{L^2}+\|u_s\|_{L^2}\|r_s\|_{L^2}))ds
\right]
\\
&\lesssim
\varepsilon_q
\kappa_q^2 N_q^{2\theta_0+7}M^2,
\end{align*}
and similarly by \eqref{eq:tilde_r}
\begin{align*}
\varepsilon_q
\mathbb{E}\left[
\int_{1-\tau_q}^{1-\tau_{q+1}} \left\|
b(b(w_s,w_s),\nu Au_s + b(u_s,u_s)+b(r_s,u_s)) \right\|_{H^{-4}} ds
\right]
\lesssim
\varepsilon_q \kappa_q^2 N_q^{2\theta_0+8}
M^2,
\end{align*}
\begin{align*}
\varepsilon_q
\mathbb{E}\left[
\int_{1-\tau_q}^{1-\tau_{q+1}} \left\|
b((-C_\varepsilon)^{-1} \tilde{r}_s,\nu Au_s + b(u_s,u_s)+b(r_s,u_s)) \right\|_{H^{-4}} ds
\right]
\lesssim
\varepsilon_q \kappa_q^2 N_q^{14+2\theta_0}
M^2.
\end{align*}
In addition,
\begin{align*}
\varepsilon_q^{1/2}
\mathbb{E}&\left[
\int_{1-\tau_q}^{1-\tau_{q+1}} \left\|
b(w_s,b(w_s,
b(w_s,u_s)) \right\|_{H^{-4}} ds
\right]
\\&\lesssim \varepsilon_q^{1/2}
\mathbb{E}\left[
\int_{1-\tau_q}^{1-\tau_{q+1}}
 \|w_s\|_{H^{\theta_0+1}} \|w_s\|_{H^{\theta_0}}\|w_s\|_{H^{\theta_0-1}}\|u_s \|_{L^2}ds
\right]\\
&\lesssim
\varepsilon_q^{1/2}
\kappa_q^{3/2} N_q^{3\theta_0} M,
\end{align*}
and
\begin{align*}
\varepsilon_q^{1/2}
\mathbb{E}\left[
\int_{1-\tau_q}^{1-\tau_{q+1}} \left\|
b(b(w_s,w_s),b(w_s,u_s)) \right\|_{H^{-4}} ds
\right]
\lesssim
\varepsilon_q^{1/2}
\kappa_q^{3/2} N_q^{3\theta_0} M,
\end{align*}
\begin{align*}
\varepsilon^{1/2}_q
\mathbb{E}\left[
\int_{1-\tau_q}^{1-\tau_{q+1}} \left\|
b((-C_\varepsilon)^{-1} \tilde{r}_s,b(w_s,u_s)) \right\|_{H^{-4}} ds
\right]
\lesssim
\varepsilon_q^{1/2}
\kappa_q^{3/2} N_q^{6+3\theta_0} M,
\end{align*}
\begin{align*}
\varepsilon_q
\mathbb{E}\left[
\int_{1-\tau_q}^{1-\tau_{q+1}} \left\|
b(A (-C_\varepsilon)^{-1} b(w_s,w_s),u_s) \right\|_{H^{-4}} ds
\right]
\lesssim
\varepsilon_q
\kappa_q N_q^{2\theta_0} M.
\end{align*}
All these quantities are small assuming
$$\varepsilon_q \kappa_q^3 N_q^{30} \leq 1. $$

For the terms involving stochastic integrals, we have by maximal inequality for stochastic convolution \cite[Theorem 1.1]{DaPZ92}
\begin{align*}
\varepsilon_q
\mathbb{E}&\left[\sup_{t \in [1-\tau_q/2,1-\tau_{q+1}]}
\left\|
\int_{1-\tau_q}^t
P(t-s)\Pi_L
b(Q_q^{1/2} dW_s,b(w_s,u_s))
\right\|_{H^{-4}}^2 \right]
\\
&\lesssim
\varepsilon_q \sum_{k,\alpha}
\int_{1-\tau_q}^{1-\tau_{q+1}}
\mathbb{E}\left[
\left\|
b(\theta_k^q a_{k,\alpha} e_k,b(w_s,u_s))
\right\|_{H^{-4}}^2  \right]ds
\lesssim
\varepsilon_q \kappa_q^2 N_q^{4\theta_0} M^2,
\end{align*}
and in the same fashion
\begin{align*}
\varepsilon_q
\mathbb{E}\left[\sup_{t \in [1-\tau_q/2,1-\tau_{q+1}]}
\left\|
\int_{1-\tau_q}^t
P(t-s)\Pi_L
b(w_s,b(Q_q^{1/2} dW_s,u_s))
\right\|_{H^{-4}}^2 \right]
\lesssim
\varepsilon_q \kappa_q^2 N_q^{4\theta_0} M^2,
\end{align*}
\begin{align*}
\varepsilon_q
\mathbb{E}\left[\sup_{t \in [1-\tau_q/2,1-\tau_{q+1}]}
\left\|
\int_{1-\tau_q}^t
P(t-s)\Pi_L
b(b(Q_q^{1/2} dW_s,w_s),u_s)
\right\|_{H^{-4}}^2 \right]
\lesssim
\varepsilon_q \kappa_q^2 N_q^{4\theta_0} M^2,
\end{align*}
\begin{align*}
\varepsilon_q
\mathbb{E}\left[\sup_{t \in [1-\tau_q/2,1-\tau_{q+1}]}
\left\|
\int_{1-\tau_q}^t
P(t-s)\Pi_L
b(b(w_s,Q_q^{1/2} dW_s),u_s)
\right\|_{H^{-4}}^2 \right]
\lesssim
\varepsilon_q \kappa_q^2 N_q^{4\theta_0} M^2,
\end{align*}
which are small under the same assumptions on $\varepsilon_q$.

As for the other terms, we have
\begin{align*}
\mathbb{E}&\left[\sup_{t \in [1-\tau_q/2,1-\tau_{q+1}]}
\left\|
\int_{1-\tau_q}^t
P(t-s)\Pi_L
b(u_s,u_s)ds
\right\|_{H^{-4}}
\right]
\\
&\lesssim
\mathbb{E}\left[\sup_{t \in [1-\tau_q/2,1-\tau_{q+1}]}
\int_{1-\tau_q}^t
\frac{ \|u_s\|_{L^2}^2 }{(\nu + \kappa_q)^{1-\delta} (t-s)^{1-\delta}}
ds \right]
\lesssim
\frac{M^2 }{(\nu + \kappa_q)^{1-\delta}},
\end{align*}
and recalling \eqref{eq:est_r2}
\begin{align*}
\mathbb{E}&\left[\sup_{t \in [1-\tau_q/2,1-\tau_{q+1}]}
\left\|
\int_{1-\tau_q}^t
P(t-s)\Pi_L
b(r_s-\tilde{r}_s,u_s)ds
\right\|_{H^{-4}}
\right]
\\
&\lesssim
\mathbb{E}\left[
\int_{1-\tau_q}^{1-\tau_{q+1}}
\|u_s\|_{H^{1/2}} \|r_s-\tilde{r}_s\|_{H^{-1/2}} ds \right]
\\
&\lesssim
\left(
\int_{1-\tau_q}^{1-\tau_{q+1}}
\mathbb{E}\|u_s\|_{H^{1/2}}^4 ds \right)^{1/4}
\left(
\int_{1-\tau_q}^{1-\tau_{q+1}}
\mathbb{E}\|r_s-\tilde{r}_s\|_{H^{-1/2}}^{4/3} ds \right)^{3/4}
\lesssim
\frac{\varepsilon_q^{1/12}\kappa_q^2 N_q^6 M}{\nu^{1/4}} .
\end{align*}
Here we additionally need to ask
\begin{align} \label{eq:condition_varesp}
\varepsilon_q \kappa_q^{24} N_q^{72} \leq 1.
\end{align}

The last It\=o integral was already controlled in \autoref{lem:h-1} by \cite[Lemma 2.5]{FlGaLu21+} as
\begin{align*}
\mathbb{E}&\left[\sup_{t \in [1-\tau_q/2,1-\tau_{q+1}]}
\left\|
\int_{1-\tau_q}^t
P(t-s)\Pi_L
b(Q_q^{1/2} dW_s,u_s)
\right\|_{H^{-4}}^2 \right]
\lesssim
\frac{M^2}{(\nu+\kappa_q)^{1-\delta}}.
\end{align*}

Putting all together, we get for our choice of parameters \eqref{eq:condition_parameters} and \eqref{eq:condition_varesp}
\begin{align*}
\mathbb{E}\left[ \sup_{t \in [1-\tau_q/2,1-\tau_{q+1}]}
\left\| U^L_t \right\|_{H^{-4}}\right]
\lesssim
\frac{M^2}{(\nu+\kappa_q)^{1-\delta}}
+
\frac{\varepsilon_q^{1/12}\kappa_q^2 N_q^6 M}{\nu^{1/4}}.
\end{align*}
\end{proof}

We are finally ready to give the proof of our main result.
\begin{proof}[Proof of \autoref{thm:main_NS}]
The proof is similar to that of \autoref{prop:dissipation_transport} in the previous section.

First of all, by \autoref{lem:h-4}, \eqref{eq:est_w3} and condition \eqref{eq:condition_varesp} it holds
\begin{align*}
\mathbb{E} \left[\sup_{t \in [1-\tau_q/2,1-\tau_{q+1}]}
\left\| u^L_t \right\|_{H^{-4}}\right]
&\lesssim
\mathbb{E}\left[ \sup_{t \in [1-\tau_q/2,1-\tau_{q+1}]}
\left\| U^L_t \right\|_{H^{-4}}\right]
+
o(M \varepsilon_q^{1/4})
\\
&\leq
C_\delta
\left(
\frac{M^2}{(\nu+\kappa_q)^{1-\delta}}
+
\frac{\varepsilon_q^{1/12}\kappa_q^2 N_q^6 M}{\nu^{1/4}}
\right) =: M\tilde{c}_q^2.
\end{align*}
and therefore with probability at least $1-\tilde{c}_q$ we have for every $t \in [1-\tau_q/2,1-\tau_{q+1}]$
\begin{align*}
\|u_t\|_{H^{-1}}^2
\leq
M^{3/2} \|u_t\|_{H^{-4}}^{1/2}
\leq
\left( \tilde{c}_q^2 + N_q^{8(\delta-1)} \right)^{1/4}M^2  =: c_q M^2.
\end{align*}

We take $\delta$, $\tau_q$, $\kappa_q$ and $N_q$ as in \autoref{prop:dissipation_transport}, and $\varepsilon_q$ satisfying \eqref{eq:condition_vareps} and \eqref{eq:condition_varesp}, for instance $\varepsilon_q = 2^{-4^q} \kappa_q^{-36} N_q^{-72} $.
From now on the proof goes exactly as that of \autoref{prop:dissipation_transport}, and we omit it.
\end{proof}

As already mentioned, the proofs of \autoref{thm:main_passive} and \autoref{thm:OU} descend easily by the same arguments presented above.
Finally, let us give the:
\begin{proof}[Proof of \autoref{cor:passive}]
Let $\PP_{\rho_0}$ and $\PP_{\nu}$ be as in the statement of the corollary. Without loss of generality we may assume $\|\rho_0\|_{L^2} \leq M$ for $\PP_{\rho_0}$ almost every $\rho_0$ and, possibly replacing $\PP_{\nu}$ with an equivalent measure, $\nu^{-1/4}$ integrable with respect to $\PP_{\nu}$.

Let us denote $\tilde{\PP} := \PP_{\rho_0} \otimes \PP_\nu \otimes \PP$, with expectation $\tilde{\mathbb{E}}$.
For every triple $\tilde{\omega}=(\rho_0,\nu,\omega)$ there exists a unique solution $\rho=\rho(\tilde{\omega})$ of \eqref{eq:passive_scalar_intro} which satisfies, by the same computations of \autoref{lem:h-4}:
\begin{align*}
\tilde{\mathbb{E}}\left[\sup_{t \in [1-\tau_q/2,1-\tau_{q+1}]}
\|\rho^L\|_{H^{-4}} \right]
\lesssim
\frac{M^2}{\kappa_q^{1/5}}
+
\varepsilon_q^{1/12}\kappa_q^2 N_q^6 M.
\end{align*}
This implies, arguing as in the proof of \autoref{prop:dissipation_transport}
\begin{align*}
\tilde{\PP}(A_q) \geq 1-\tilde{c}_q,
\quad
A_q = \left\{ E(1-\tau_{q+1}) \leq M^2 \frac{2 c_q}{\nu \tau_q} \right\}
\end{align*}
for every $q \in \N$ and suitable $c_q$, $\tilde{c}_q$, decreasing fast enough so that Borel-Cantelli Lemma gives
\begin{align*}
\lim_{t \uparrow 1} \| \rho_t \|_{L^2} = 0
\quad
\tilde{\PP}-\mbox{almost surely}.
\end{align*}
In particular, by Fubini Theorem we have a full $\PP$-probablity set $\Omega_0 = \Omega_0(\PP_{\rho_0},\PP_\nu) \subset \Omega$ such that for every $\omega \in \Omega_0$ it holds $\lim_{t \uparrow 1} \| \rho_t \|_{L^2} = 0$ for $\PP_{\rho_0} \otimes \PP_\nu$ almost every $(\rho_0,\nu)$.
In particular, total dissipation for almost every initial condition and viscosity occurs for a generic realization $v=v(\omega)$, $\omega \in \Omega_0$.
It is interesting to observe that $\PP(\Omega_0)=1$.
\end{proof}


\bibliographystyle{plain}

\end{document}